\documentclass[12pt]{amsart}
\usepackage[utf8]{inputenc}
\usepackage[all]{xy}
\usepackage[english]{babel}
\usepackage{amssymb, enumerate}
\usepackage{mathrsfs}
\usepackage[lite, initials]{amsrefs}
\usepackage{geometry}
\usepackage{hyperref}

\newtheorem{teorema}{Theorem}[section]
\newtheorem*{theorem*}{Main Theorem}
\newtheorem{lemma}[teorema]{Lemma}
\newtheorem{propos}[teorema]{Proposition}
\newtheorem{corol}[teorema]{Corollary}
\theoremstyle{definition}
\newtheorem{ex}{Example}[section]
\newtheorem{rem}{Remark}[section]
\newtheorem{defin}[teorema]{Definition}

\setlength{\textwidth}{\paperwidth}
\addtolength{\textwidth}{-2in}
\calclayout

\newcounter{tempo}

\def\R{{\mathbb R}}

\def\C{{\mathbb C}}

\def\P{{\mathbb P}}
\def\J{{\mathbb J}}
\def\CP{{\mathbb{CP}}}
\def\H{{\mathbb H}}
\def\cU{{\mathcal U}}

\def\sfera{{\mathbb S}}

\newcommand{\tr}{\mathrm{tr}}

\newcommand{\End}{\mathrm{End}}
\newcommand{\Uno}{\mathbb{I}}

\def\Span{{\rm Span}}

\def\de{\partial}

\title[The twistor space of a real associative algebra]{The twistor space of a real associative algebra: incidence geometry, semisimple classification and slice-regular functions}
\author[S. Mongodi]{Samuele Mongodi}
\address{Politecnico di Milano, Dipartimento di Matematica, Via Bonardi, 9 -- I-20133 Milano, Italy}
\email{samuele.mongodi@polimi.it}
\date\today
\subjclass[2010]{53C28, 32L25, 32M15, 14M15, 30G35, 16H99, 32A30}

\begin{document}
\begin{abstract}
To every finite-dimensional real associative algebra $A$ we associate its \emph{twistor space} $S=S(A)$, the real algebraic set of square roots of $-1$ in $A$. Left multiplication endows $S$ with an almost complex structure $J_S$, and associativity of $A$ is exactly what makes $J_S$ integrable: the resulting complex manifold embeds biholomorphically into the complex Grassmannian of $\C\otimes A$, via the assignment of $s\in S$ to the $(-i)$-eigenspace of the complexification of the left multiplication $L_s$.

This identification turns the tautological pairing $\pi:\C\otimes A\times S\to A$ into an incidence correspondence: for every compact complex subvariety $K\subseteq S$, the associated zero variety $Z_K\subset\C\otimes A$ is a complex analytic set, by Remmert's proper mapping theorem, and the pull-back of the incidence variety along a holomorphic lift governs the zero set of the corresponding function on $\pi(\cU\times K)$. In this setting we also give an intrinsic, section-free reformulation of the twistor transform of Gentili, Salamon and Stoppato as a holomorphic map into the projective bundle $\mathbb P(\mathcal V\oplus\mathcal V)$ over $S$.

After fixing a Euclidean structure on $A$, we single out the compact subvariety $S_0\subset S$ of square roots of $-1$ whose left multiplication is orthogonal, and we classify it for semisimple algebras through the Wedderburn decomposition: $S_0$ is a finite union of products of compact Hermitian symmetric spaces ($O(2m)/U(m)$, $Sp(m)/U(m)$ and Grassmannians $U(n)/(U(j)\times U(n-j))$); in the classical simple cases we compute explicit equations for the associated Euclidean zero variety $Z_0$, realizing it as an isotropic or determinantal cone.

This geometric picture originates in, and is illustrated throughout by, the theory of slice-regular functions on real associative algebras: for $A=\H$, the twistor space $S(\H)$ is the classical twistor sphere and the incidence correspondence above recovers the twistor transform of Gentili, Salamon and Stoppato. As an application of the general machinery, we finally introduce a broader class of generalized slice-regular functions, for a compact and connected parameter space $S$, and show that the maximum modulus principle, a representation formula and a twisted Cauchy--Riemann characterization extend to this class via a reparametrization of $S$ by its holomorphic automorphisms.
\end{abstract}

\maketitle

\tableofcontents

\section{Introduction}

Given a real vector space $V$, the set $\mathcal E_V$ of linear complex structures on $V$ is itself a complex manifold, biholomorphic to an open subset of a complex Grassmannian; this elementary fact is the starting point of twistor theory, from the twistor algebra of Penrose \cite{Pen} and the twistor spaces of self-dual Riemannian $4$-manifolds of Atiyah, Hitchin and Singer \cite{AHS}, to the twistor spaces of Hermitian symmetric spaces studied by Salamon \cite{Sal}. In this paper we study the analogous, but purely algebraic, construction attached to a finite-dimensional real associative algebra $A$: the set
$$S=S(A)=\{a\in A\ :\ a^2=-1\}$$
of square roots of $-1$ in $A$, which we call the \emph{twistor space} of $A$. This generalizes, from the complex numbers, the choice of an imaginary unit: in $\C$ there are exactly two square roots of $-1$, essentially indistinguishable as complex structures, since $\pm i$ share the same eigenspaces in $\C\otimes\C$; in a generic real associative algebra $A$, the set $S(A)$ is typically much larger and the induced family of complex structures correspondingly richer.

\medskip

Every $s\in S$ defines a complex structure $L_s$ on $A$ by left multiplication, and $S$ inherits from the space $\mathcal E_A$ of linear complex structures on $A$ a natural almost complex structure $J_S$; associativity of $A$ is precisely what makes $J_S$ integrable (see Proposition \ref{prp_integrabile}). Since the inclusion of $S$ into $\mathcal E_A$ is then holomorphic, $S$ is identified with a complex subvariety of the Grassmannian of $\C\otimes A$ (see Theorem \ref{teo_cplstr}).

\begin{theorem*}[A --- Complex geometry of $S(A)$]
Let $A$ be a finite-dimensional real associative algebra. The twistor space $S=S(A)$ is a smooth real algebraic subset of $A$ (Lemma \ref{lemma_S_smooth}), possibly with components of different dimensions, and the almost complex structure $J_S$ induced by left multiplication is integrable (Proposition \ref{prp_integrabile}). The map $s\mapsto\mathcal W(L_s)$, sending $s$ to the $(-i)$-eigenspace of the complexification of $L_s$, is a biholomorphism of $(S,J_S)$ onto a complex submanifold of the Grassmannian $\mathrm{Gr}_\C(\dim_\R A/2,\C\otimes A)$ (Theorem \ref{teo_cplstr}).
\end{theorem*}

\medskip

This complex-geometric identification turns the tautological pairing $\pi:\C\otimes A\times S\to A$, $\pi(w,s)=a+sb$ for $w=1\otimes a+i\otimes b$, into an incidence correspondence in $\C\otimes A\times S$: the fibers $\mathfrak Z_K(a)=\{(w,s)\in\C\otimes A\times K: \pi(w,s)=a\}$, for a fixed $a\in A$ and a compact complex subvariety $K\subseteq S$, form a real-analytic foliation of $\C\otimes A\times K$ with complex leaves, whose projections $Z_K(a)\subset\C\otimes A$ are complex analytic sets by Remmert's proper mapping theorem (see Theorem \ref{teo_zero_var} and Proposition \ref{prp_zero}). Once $u\in S$ is fixed, $A$ acquires the structure of a complex vector space via $L_u$; extending the conjugation action of the invertible elements of $A$ on $S$ to the zerodivisors produces a holomorphic vector bundle $\mathcal V$ over $S$ (Proposition \ref{prp_bundle}), and a never-vanishing section of $\mathcal V$ over an affine chart of $S$ yields a generalized stereographic parametrization and an associated twistor transform (Proposition \ref{prp_twist}); dropping the need for a global section, the same data define an intrinsic holomorphic map into the projective bundle $\P(\mathcal V\oplus\mathcal V)$ (Theorem \ref{teo_intrinsic_twistor}).

\begin{theorem*}[B --- Twistor/incidence geometry for compact $K\subseteq S$]
Let $A$ be a finite-dimensional real associative algebra and let $K\subseteq S(A)$ be a compact complex subvariety. Then:
\begin{enumerate}
\item[(i)] the zero variety $Z_K=\{w\in\C\otimes A: \pi(w,s)=0\ \text{for some}\ s\in K\}$ is a complex analytic subset of $\C\otimes A$ (Theorem \ref{teo_zero_var});
\item[(ii)] for every holomorphic lift $F:\cU\to\C\otimes A$ of a slice-regular function $f$ on $A$, the incidence correspondence between $F^{-1}(Z_K)$ and $K$ is analytic, and it governs the structure of the fibers of $f$ over $\pi(\cU\times K)$ (Theorem \ref{teo_incidence});
\item[(iii)] there is a holomorphic map $S\to\P(\mathcal V\oplus\mathcal V)$, defined intrinsically without any choice of global section of $\mathcal V$, which for $A=\H$ recovers the twistor transform of Gentili, Salamon and Stoppato \cite{GSS} (Theorem \ref{teo_intrinsic_twistor}).
\end{enumerate}
\end{theorem*}

\medskip

The three parts of Main Theorem B are conveniently pictured as a single double fibration
\[
\xymatrix{
& \mathfrak Z_K \ar[dl]_-{p_K} \ar[dr]^-{q_K} & \\
\C\otimes A & & K
}
\]
in direct analogy with the double fibration of Penrose's twistor correspondence between (compactified) spacetime and twistor space \cite{Pen}: the correspondence space $\mathfrak Z_K$ projects, via the proper holomorphic map $p_K$, onto the zero variety $Z_K=p_K(\mathfrak Z_K)\subset\C\otimes A$ (part (i)), and, via $q_K$, onto $K\subseteq S$. Pulling this diagram back along a holomorphic lift $F:\cU\to\C\otimes A$ replaces $\C\otimes A$ by $\cU$ and $\mathfrak Z_K$ by the incidence correspondence $\mathcal C_{F,K}$ of part (ii); dropping the choice of a section along the fibres of $q_K$, instead, produces the intrinsic twistor map of part (iii). We keep this picture, rather than the classical language of stem functions, at the centre of the paper.

\medskip

Fixing a Euclidean inner product on $A$, we single out the compact subset $S_0\subset S$ of those square roots of $-1$ whose left multiplication is orthogonal; $S_0$ is a compact complex subvariety of $S$ (Theorem \ref{teo_S0_complex}) and carries an associated Euclidean quadratic cone $Q_0$, which should not be confused, in general, with the quadratic cone of Ghiloni and Perotti: the two agree only under additional compatibility assumptions with a chosen $*$-involution. Compactness of $S_0$ makes it possible to describe the associated zero variety $Z_0\subset\C\otimes A$ by global holomorphic equations (Theorem \ref{teo_luogo}), and, when $A$ is semisimple, to classify $S_0$ completely.

\begin{theorem*}[C --- Semisimple classification and determinantal equations]
Let $A$ be a finite-dimensional semisimple real associative algebra, with Wedderburn decomposition $A\cong\prod_r M_{n_r}(D_r)$, $D_r\in\{\R,\C,\H\}$. Then $S_0(A)$ decomposes as a finite union of products of compact Hermitian symmetric spaces: each factor $S_0(M_{n_r}(D_r))$ is $O(2m)/U(m)$ if $D_r=\R$ and $n_r=2m$ is even, a disjoint union of Grassmannians $U(n_r)/(U(j)\times U(n_r-j))$ if $D_r=\C$, and $Sp(n_r)/U(n_r)$ if $D_r=\H$ (Theorem \ref{teo_wedderburn}). In each of these simple cases the associated zero variety $Z_0$ coincides with the set-theoretic locus $Z$ defined by the vanishing of an explicit quadratic (Clifford) equation, and is realized as an isotropic or determinantal cone over the corresponding symmetric space (Theorem \ref{teo_luogo} and \S\ref{explicit_Z0}).
\end{theorem*}

\medskip

Beyond these three results, we develop several further points. We make the incidence picture of Main Theorem B modular in $K$, relating it to the study of zeros and singularities of slice functions of Ghiloni, Perotti and Stoppato \cites{GPS1,GPS2}, to the several-variable theory \cite{GPsv} and to the local representation formula of Gentili and Stoppato \cite{GeSt}. Our explicit computation of $Z_0$ in the classical simple cases $\H$, $M_{2m}(\R)$, $M_n(\C)$ and $M_m(\H)$ (\S\ref{explicit_Z0}) rests, in part, on the behaviour of slice functions on domains without real points studied by Altavilla \cite{Alt}.

\bigskip

This geometric picture originates in, and remains best illustrated by, the theory of slice-regular functions. Slice-regular quaternionic functions were introduced by Gentili and Struppa \cite{GS2}; since then their theory has been extensively studied (see \cites{GSS1, CSS1, CSS2}) and extended to Clifford algebras \cite{CSS} and to general real alternative algebras \cite{GP1}. These functions share an impressive number of properties with holomorphic functions: power series expansions, a maximum modulus principle, a Cauchy formula, a rigid structure of the zero set, and so on. Ghiloni and Perotti \cite{GP1}, building on an idea going back to Fueter \cite{Fue}, for the quaternionic case, and to Sce \cite{Sce} and Rinehart \cite{Rin} for a general algebra, showed that every slice-regular quaternionic function is induced by a holomorphic function $F$ from an open subset of $\C$ to $\C\otimes A$; Gentili, Salamon and Stoppato \cite{GSS}, working with the classical twistor sphere $S(\H)=\CP^1$, associated to every quaternionic slice-regular function a holomorphic map from $\cU\times\CP^1$ to $\CP^3$, and the author \cite{M1} showed that the values of $f$ and of $F$ are linked by a family of complex hypersurfaces $Z(q)\subset\C^4$. With this complex structure on $S=S(A)$, a slice-regular function $f$ on $A$ (defined on the quadratic cone $Q_A$ of Ghiloni and Perotti \cite{GP1}) induces precisely a holomorphic map $\mathscr F$ from an open subset of $\C\times S$ to $\C\otimes A\times S$, of the special form $\mathscr F(z,s)=(F(z),s)$ (see Proposition \ref{prp_tensore}): it is the abstract counterpart, for a general algebra $A$, of the classical quaternionic correspondence recalled in full, together with the explicit link between the Ghiloni--Perotti stem function and the twistor transform of \cite{GSS}, in Appendix \ref{hol_quat}.

\medskip

From this description, many qualitative properties of slice-regular functions can be traced back to the holomorphicity of $\mathscr F$; however, the theory developed so far assumes the stronger hypothesis that $\mathscr F$ is exactly of the form $\mathscr F(z,s)=(F(z),s)$, with $F$ holomorphic. If one drops this restriction and considers instead an arbitrary holomorphic map $\mathfrak F:\cU\times S\to\C\otimes A\times S$ with the appropriate symmetries, a larger class of \emph{generalized slice-regular functions} is defined, to which much of the holomorphic machinery developed in this paper still applies; we discuss this application in Section \ref{application}.

\bigskip

The content is organized as follows. Section \ref{twistor_space} introduces the twistor space $S=S(A)$, its natural complex structure $J_S$, and the elementary case of slice-regular functions on $A$ (Main Theorem A). Section \ref{zero_var} describes the incidence variety and the zero variety $Z_K$ associated with a compact complex subvariety $K\subseteq S$, together with the relative incidence correspondence of a slice-regular function over such a $K$ (Main Theorem B, part (i)-(ii)). Section \ref{stereo_twistor} constructs the generalized stereographic parametrization and twistor transform of $S$, together with its intrinsic, section-free formulation (Main Theorem B, part (iii)). Section \ref{orto} restricts attention to the compact subvariety $S_0\subseteq S$ of orthogonal square roots of $-1$, proves the classification of $S_0$ for semisimple algebras and computes explicit equations for $Z_0$ in the classical simple cases (Main Theorem C). Section \ref{application} presents, as an application of this machinery, the class of generalized slice-regular functions and shows that the maximum modulus principle, a representation formula, an identity principle and a twisted Cauchy--Riemann characterization extend to it. Finally, Appendix \ref{hol_quat} collects, for the reader's convenience, the classical quaternionic picture -- the various definitions of slice-regular function, the results of \cite{GP1}, and the explicit correspondence with the twistor transform of \cite{GSS}, together with its link to \cite{M1} -- from which the constructions of this paper originated.

\bigskip

We expect the twistor space $S(A)$ to admit applications well beyond the theory of slice-regular functions, in at least four directions. First, since $S(A)$ is a purely linear-algebraic construction attached to a single algebra $A$, it can be globalized fibrewise: given a bundle of associative algebras $\mathcal A\to M$ over a manifold $M$ — for instance the Clifford bundle of a Riemannian manifold, or the endomorphism bundle of a vector bundle carrying extra structure — the associated bundle $S(\mathcal A)\to M$ is a natural candidate twistor space for the underlying geometric structure on $M$. For $A=\mathbb H$ this recovers the twistor spaces of self-dual $4$-manifolds of Atiyah, Hitchin and Singer \cite{AHS} and of quaternionic Kähler manifolds of Salamon \cite{Sal}; for other Clifford algebras it suggests generalized "Clifford-Kähler" geometries, possibly related to $\mathrm{Spin}(7)$- and $G_2$-structures via the spin representations of $\mathrm{Cl}(7)$ and $\mathrm{Cl}(8)$. Second, in the semisimple case $S_0(A)$ is, by Main Theorem C, a product of compact Hermitian symmetric spaces — exactly the class of homogeneous spaces for which the Penrose transform of representation theory, in the sense of Baston and Eastwood, computes the cohomology of homogeneous vector bundles via Borel–Weil–Bott; the double fibration $\C\otimes A\leftarrow\mathfrak Z_K\to K$ of this paper is precisely the kind of correspondence such a transform requires, and it would be interesting to make this dictionary precise. Third, the family of Cauchy–Riemann–Fueter-type operators $\mathscr D_\phi$ appearing in Remark \ref{rem_global_operator} suggests a Clifford-analytic theory of "monogenic" functions on a general algebra $A$, parallel to slice-regularity but governed by $S$ (or $S_0$) rather than by a single imaginary unit, with integral representation formulas of Penrose-transform type. Finally, the explicit determinantal and isotropic-cone equations for $Z_0$ obtained in \S\ref{explicit_Z0} have the same algebraic shape as the ADHM equations for instanton moduli, hinting at a possible bridge between the incidence geometry of $S_0(A)$ and moduli spaces in gauge theory. We plan to return to these questions elsewhere.

\section{The twistor space of a real associative algebra}\label{twistor_space}

The aim of this section is to extend the construction of the complex structure $\J$ in \eqref{eq_cpstr} to a more general setting.

Let $A$ be an associative real algebra with unity; as a real vector space, $A$ is isomorphic to $\R^N$. We consider the set
$$S=\{a \in A\ :\ a^2=-1\}\;.$$
We define some families of linear operators on $A$.
\begin{defin}Given $a\in A$, we define $L_a,\ R_a\in\mathrm{End}(A)$ as
$$L_a(x)=ax\qquad R_a(x)=xa\;.$$
We denote their sum by $F_a$, i.e. $F_a(x)=ax+xa$, and their composition by $K_a$, i.e. $K_a(x)=axa$.
\end{defin}

The following properties are easy to prove.

\begin{lemma}\label{lmm_conti}For $a\in A$, we have
\begin{enumerate}
\item $[R_a,L_a]=0$,
\item $F_a^2=L_a^2+R_a^2+2K_a$.\setcounter{tempo}{\value{enumi}}
\end{enumerate}
For $s\in S$, we have
\begin{enumerate}\setcounter{enumi}{\value{tempo}}
\item $R_s^2=L_s^2=-I$, therefore $R_s$ and $L_s$ are invertible,
\item $R_sF_s=K_s-I$,
\item $F_s^2=2K_s-2I$,
\item $K_s^2=I$.
\end{enumerate}
\end{lemma}

We start by exploring the geometry of the set $S$.

\begin{lemma}\label{lemma_S_smooth}
The set $S$ is a real algebraic subset of $A$ whose underlying real analytic space is a smooth embedded submanifold of $A$, possibly with connected components of different dimensions. Moreover,
\[
T_sS=\{h\in A\ :\ sh+hs=0\}
\]
for every $s\in S$, where $T_sA$ is identified with $A$ in the usual way.
\end{lemma}
\begin{proof}
Let $f:A\to A$ be the polynomial map $f(a)=a^2+1$. Then $S=f^{-1}(0)$, so $S$ is a real algebraic subset of $A$, and
\[
Df_s(h)=sh+hs
\]
for $s\in S$. Hence the expected tangent space is $\ker F_s$, where $F_s=L_s+R_s$.

We first recall that the group $A^\times$ of invertible elements of $A$ is a Zariski open dense subset of $A$. Indeed, the map $x\mapsto L_x$ is real linear and $x$ is invertible if and only if $L_x$ is invertible. Thus
\[
A^\times=\{x\in A:\det L_x\neq0\},
\]
and the polynomial $\det L_x$ is not identically zero, because $\det L_1=1$.

Fix $s\in S$. We show that $S$ is locally the conjugacy orbit of $s$. If $t\in S$ is sufficiently close to $s$, then
\[
y=1-ts
\]
is invertible. Moreover
\[
ys=(1-ts)s=s+t=t(1-ts)=ty,
\]
so $t=ysy^{-1}$. Thus all points of $S$ near $s$ are obtained by conjugating $s$.

Let
\[
E_+(s)=\{h\in A:\ sh+hs=0\},\qquad
E_-(s)=\{h\in A:\ sh-hs=0\}.
\]
The involution $K_s(h)=shs$ satisfies $K_s^2=I$, and $E_+(s)$ and $E_-(s)$ are respectively the $+1$ and $-1$ eigenspaces of $K_s$. Hence
\[
A=E_+(s)\oplus E_-(s).
\]
Consider the map, defined for $h\in E_+(s)$ sufficiently small,
\[
\psi_s(h)=(1+h)s(1+h)^{-1}.
\]
It takes values in $S$ and
\[
D\psi_s|_0(h)=hs-sh=-2sh.
\]
Since $h\mapsto -2sh$ is an automorphism of $E_+(s)$, the inverse function theorem shows that $\psi_s$ parametrizes an embedded submanifold of $A$ with tangent space $E_+(s)$. By the previous paragraph, this submanifold contains a neighbourhood of $s$ in $S$ and is contained in $S$. Therefore $S$ is smooth near $s$ and
\[
T_sS=E_+(s)=\{h\in A:\ sh+hs=0\}.
\]
\end{proof}

\begin{rem}\label{rem_componenti}
Since $K_s$ is an involution of $A$ with $K_s^2=I$, we have the eigenspace decomposition $A=E_+(s)\oplus E_-(s)$ of Lemma \ref{lemma_S_smooth}, and $\tr K_s=\dim E_+(s)-\dim E_-(s)$ is an integer with the same parity as $N$ and $|\tr K_s|\le N$. As $T_sS=E_+(s)$ by Lemma \ref{lemma_S_smooth}, the local real dimension of $S$ at $s$ is $\dim E_+(s)=(N+\tr K_s)/2$. The largest possible value of this dimension is attained when $\tr K_s$ is as large as possible; the case $\tr K_s=0$, in which $\dim E_+(s)=\dim E_-(s)=N/2$, will turn out to be the generic one in the examples below (see Remark \ref{rem_generic} and Corollary \ref{cor_Cliff2}).
\end{rem}

\begin{lemma}\label{lmm_tr}For $s\in S$, the operators $L_s$, $R_s$, $F_s$ have zero trace.\end{lemma}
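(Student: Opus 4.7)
The plan is straightforward: use the relations from Lemma \ref{lmm_conti} to recognize $L_s$ and $R_s$ as complex structures on the real vector space $A$, and then invoke the standard fact that a real endomorphism squaring to $-I$ has zero trace.

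More concretely, by Lemma \ref{lmm_conti}(3), $L_s^2 = -I$ and $R_s^2 = -I$ on $A \cong \R^N$. Hence the minimal polynomials of both $L_s$ and $R_s$ divide $X^2 + 1$, which is irreducible over $\R$. Extending scalars to $\C$, these operators are diagonalizable with spectrum contained in $\{+i, -i\}$. Since $L_s$ and $R_s$ are real, the eigenspaces for $+i$ and $-i$ are exchanged by complex conjugation and hence have the same dimension. Therefore $\tr L_s = 0$ and $\tr R_s = 0$.

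For $F_s$, I would simply use linearity of the trace: $F_s = L_s + R_s$, so $\tr F_s = \tr L_s + \tr R_s = 0$.

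There is no real obstacle here; the only subtlety worth a sentence is recording that a consequence of this argument is that $N = \dim_\R A$ must be even whenever $S \neq \emptyset$, since $A$ carries the complex structure $L_s$. Everything else is a routine application of linear algebra over $\R$ versus $\C$, combined with the algebraic identities already collected in Lemma \ref{lmm_conti}.
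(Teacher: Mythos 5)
Your proof is correct. For $L_s$ and $R_s$ you argue exactly as the paper does: from Lemma \ref{lmm_conti} one has $L_s^2=R_s^2=-I$, the complexified operators have spectrum in $\{i,-i\}$, and reality forces the two eigenspaces to have equal dimension, hence zero trace (the paper phrases this as ``the trace has to be real,'' which is the same observation). Where you genuinely diverge is in the treatment of $F_s$: you dispatch it in one line by linearity of the trace, $\tr F_s=\tr L_s+\tr R_s=0$, whereas the paper instead computes $F_s^2(F_s^2+4I)=4(T_s+I)(T_s-I)=0$ from Lemma \ref{lmm_conti}, concludes that the eigenvalues of $F_s$ lie in $\{0,\pm 2i\}$, and again invokes reality of the trace. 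Your route is shorter and uses nothing beyond what was already established for $L_s$ and $R_s$; the paper's route has the side benefit of exhibiting the spectral decomposition of $F_s$ (and its relation to the eigenspaces $E_{\pm 1}(s)$ of $T_s$), which is information the paper reuses elsewhere, but for the purposes of this lemma your argument is entirely sufficient.
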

\begin{proof}By Lemma \ref{lmm_conti}, we have that $L_s^2=R_s^2=-I$, therefore $L_s$ and $R_s$ have eigenvalues $\pm i$; as the trace has to be real, the only possibility is $\tr L_s=\tr R_s=0$.

Similarly, $F_s^2=2K_s-2I$ and $F_s^2+4I=2K_s+2I$, therefore
$$F_s^2(F_s^2+4I)=4(K_s+I)(K_s-I)=0$$
so $F_s$'s eigenvalues can be $0$, $\pm 2i$; again, as the trace is a real number, the only possibility is $\tr F_s=0$.\end{proof}


\begin{rem}\label{rem_iperpiano}From Lemma \ref{lmm_tr}, $S$ is contained in the kernel of the linear map $A\ni a\mapsto \tr(F_a)\in\R$, whereas $\tr(F_1)=2N$, so $S$ is contained in a hyperplane that does not contain $1$, hence the map $(x+iy, s)\mapsto x+sy$ is injective when $y>0$.\end{rem}

In the case of a Clifford algebra, the previous remark follows also from a more precise computation of the trace of the operator $L_s$.

\begin{corol}\label{cor_Cliff1}If $A$ is a Clifford algebra, $\tr L_s=0$ for all $s\in S$, i.e. $S$ is contained in a hyperplane which does not contain $1$. In particular, this implies that the map $(x+iy,s)\mapsto x\cdot1+y\cdot s$ is injective from $\C_+\times S$ into $A$.\end{corol}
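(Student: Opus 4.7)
The plan is to exploit the standard basis of a Clifford algebra in order to compute the trace of $L_a$ explicitly as a linear functional on $A$, and then combine this with the fact, already observed in Lemma \ref{lmm_tr}, that $\tr L_s=0$ for $s\in S$.

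First I would fix generators $e_1,\dots,e_n$ of the Clifford algebra and consider the associated basis $\{e_I\}_{I\subseteq\{1,\dots,n\}}$, with $e_\emptyset=1$ and $e_I=e_{i_1}\cdots e_{i_k}$ for $I=\{i_1<\dots<i_k\}$. The key computation is the trace of $L_{e_I}$ with respect to this basis: writing $\tr L_{e_I}=\sum_J c_{I,J}$, where $c_{I,J}$ is the coefficient of $e_J$ in $e_Ie_J$, one uses the multiplication rule $e_Ie_J=\varepsilon(I,J)\,e_{I\triangle J}$ for some sign $\varepsilon(I,J)\in\{\pm1\}$. For $c_{I,J}$ to be nonzero one needs $I\triangle J=J$, that is $I=\emptyset$. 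Consequently $\tr L_{e_I}=0$ for $I\neq\emptyset$ and $\tr L_1=N$, so by linearity
\[
\tr L_a = N\,a_0, \qquad \text{where }a=\textstyle\sum_I a_I e_I,\ a_0=a_\emptyset.
\]

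Next, for $s\in S$ one has $L_s^2=L_{s^2}=-I$, so $L_s$ is a real linear complex structure on $A$; its eigenvalues are $\pm i$ with equal multiplicity, hence $\tr L_s=0$ (this is of course also a particular case of Lemma \ref{lmm_tr}). Combining this with the explicit formula above yields $s_0=0$, showing that $S$ is contained in the affine hyperplane $H=\{a\in A:a_0=0\}$, which plainly does not contain $1$.

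Finally, for the injectivity statement, suppose $x_1\cdot1+y_1 s_1 = x_2\cdot1+y_2 s_2$ with $x_j\in\R$, $y_j>0$ and $s_j\in S$. Taking coefficients along $1$ and using $(s_j)_0=0$ gives $x_1=x_2$, hence $y_1 s_1=y_2 s_2$. Squaring both sides with respect to the algebra product yields $-y_1^2 = -y_2^2$, so $y_1=y_2$ (both positive) and therefore $s_1=s_2$. The only step requiring any real care is the trace computation in the first paragraph; once that linear-algebraic identity is in hand, the rest of the statement follows formally.
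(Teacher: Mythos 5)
Your proposal is correct and follows essentially the same route as the paper: the same computation of $\tr L_a=2^n a_0$ in the basis $\{e_I\}$ (the paper phrases the key step as ``$e_Ie_J=e_J$ only for $I=\emptyset$,'' which is your symmetric-difference observation), combined with the vanishing trace from Lemma \ref{lmm_tr}. The only cosmetic difference is that the paper passes through $\tr F_s=\tr L_s+\tr R_s=0$ rather than using $\tr L_s=0$ directly, and your injectivity argument (squaring $y_1s_1=y_2s_2$) just makes explicit a step the paper leaves to the reader.
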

\begin{proof} Let $e_1,\ldots, e_n$ be generators of $A$; a basis of $A$ as a real vector space is
$$\{e_I\ \vert\ I\subseteq\{1,\ldots, n\}\}$$
where, for $I=(i_1,\ldots, i_p)$,
$$e_I=e_{i_1}\cdots e_{i_p}\qquad e_0=e_{\emptyset}=1\;,$$
so, the dimension $N$ of $A$ is $2^n$.

For $a\in A$, we have that $a=\sum a_Ie_I$, then, the $e_J$-component of the element
$$L_a(e_J)=\sum a_Ie_Ie_J$$
is $a_0$, because the only $I$ such that $e_Ie_J=e_J$ is $I=\emptyset$, as all the elements of the basis are invertible. Therefore
$$\tr L_a=2^na_0\;.$$
In the same way, we prove that $\tr R_a=2^na_0$. As, for $s\in S$,  $0=\tr F_s=\tr L_s+\tr R_s=2^{n+1}s_0$, then $S$ is contained in the hyperplane $s_0=0$.

Given that $1$ is contained in the hyperplane $s_0=1$, we have that two elements of $A$ of the form $x+sy$ and $u+sv$ with $x,u\in\R$ and $y,v\in\R_+$ coincide if and only $x=u$ and $y=v$.
\end{proof}

\begin{rem}\label{rem_generic}
The case $\tr K_s=0$ is, in some sense, generic. If $h\in T_sS$, then the map $L_h:A\to A$, given by $L_h(x)=hx$, is such that $L_h(E_+(s))\subseteq E_-(s)$ and $L_h(E_-(s))\subseteq E_+(s)$; if $h$ is invertible, this implies that $\dim E_+(s)=\dim E_-(s)$, hence $\tr K_s=0$. Therefore, if $T_sS$ contains at least one invertible element, then $\tr K_s=0$ and $s$ lies on a connected component of $S$ of dimension $N/2$. We stress that this condition is unrelated, in general, to the metric subset $S_0$ introduced in Section \ref{orto}: the former is intrinsic to the algebra, while the latter depends on a choice of Euclidean structure.
\end{rem}

For every $s\in S$, the map $L_s:A\to A$ can be identified with an element of $\End(T_sA)$; moreover, if $h\in T_sS$, then $sh+hs=0$ and
$$sL_s(h)+L_s(h)s=ssh+shs=s(sh+hs)=0$$
so $L_s$ restricts to an endomorphism of $T_sS$. We define the almost complex structure $J_S:TS\to TS$ as
$$J_S(s,h)=(s,L_s(h))\;.$$

\begin{propos}\label{prp_integrabile}The structure $J_S$ is integrable.\end{propos}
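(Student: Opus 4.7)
The plan is to apply the Newlander--Nirenberg theorem and check that the Nijenhuis tensor
\[
N_{J_S}(X,Y) = [X,Y] + J_S[J_SX, Y] + J_S[X, J_SY] - [J_SX, J_SY]
\]
vanishes for every pair of vector fields $X,Y$ on $S$.

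The key reduction I would use is to carry out the entire computation in the ambient algebra $A$. Given $X,Y$ on $S$, extend them smoothly to $A$-valued functions on a neighbourhood of $S$ in $A$; since $L_s$ preserves $T_sS$ (established in the discussion above), the natural extension $(J_SX)(a):=aX(a)$ of $J_SX$ to the ambient space is itself tangent to $S$ at every $s\in S$. The Lie bracket of two vector fields tangent to a submanifold agrees with the intrinsic bracket, so every bracket in $N_{J_S}(X,Y)$ can be evaluated through the flat-space formula $[X,Y](s) = dY_s[X(s)] - dX_s[Y(s)]$, after applying the Leibniz rule
\[
d(J_SY)_s[h] = h\,Y(s) + s\cdot dY_s[h].
\]

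Substituting these expressions into the four brackets appearing in $N_{J_S}(X,Y)$ produces a sum of monomials of three types: nested differentials such as $s\cdot dY_s[sX(s)]$; purely algebraic products of the form $sX(s)Y(s)$ or $sY(s)X(s)$; and terms $\pm dX_s[Y(s)]$ produced whenever the outer $J_S$ in $J_S[J_SX,Y]$ or $J_S[X,J_SY]$ meets the $s$ brought out by the inner Leibniz rule, collapsing via $s^2=-1$. A direct enumeration shows that each such monomial appears exactly twice with opposite signs: nested differentials are paired between $J_S[J_S\,\cdot\,,\,\cdot\,]$ and $[J_SX,J_SY]$, algebraic products between the two mixed brackets and $[J_SX,J_SY]$, and each $\pm dX_s[Y(s)]$ between $[X,Y]$ and the appropriate twisted mixed bracket. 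Hence $N_{J_S}\equiv 0$ and integrability follows.

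The main obstacle is the noncommutative bookkeeping of this cancellation, and the one place where more than $s^2=-1$ enters: rewriting $s\cdot(s\cdot v)$ as $-v$ is precisely the step that uses the associativity of $A$. This is exactly where the hypothesis on $A$ is needed; in a nonassociative setting a residual term involving the associator $(s,s,v)$ would survive, so the cancellation really does rely on the associative case.
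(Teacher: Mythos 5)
Your proposal is correct and follows essentially the same route as the paper: both verify directly that the Nijenhuis tensor vanishes by viewing the vector fields as $A$-valued maps, applying the Leibniz rule $d(J_SY)_s[h]=h\,Y(s)+s\cdot dY_s[h]$, and cancelling the resulting monomials, with associativity supplying the final cancellation. One small caveat on your closing remark: associativity enters not only in collapsing $s(sv)=-v$ but also in identifying $s(yx)$ with $(sy)x$ (and $s(xy)$ with $(sx)y$) when the purely algebraic terms are paired off --- indeed in an alternative algebra such as the octonions one has $(s,s,v)=0$ yet integrability still fails, precisely because it is these latter associator terms that survive.
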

\begin{proof}Let $X$, $Y$ be vector-fields on $S$. The Nijenhuis tensor is then
$$N(X,Y)=[X,Y]+J_S([J_SX,Y]+[X,J_SY])-[J_SX,J_SY]\;.$$
As it is well known, $[X,Y]=dX(Y)-dY(X)$, where the differential is computed as the differential of the maps $X,Y:S\to A$; so we need to compute the quantities
$$d(J_SX)(J_SY)=J_SY\cdot X + J_S(dX(J_SY))$$
$$J_S(d(J_SX)(Y))=J_S(Y\cdot X) -dX(Y)\;,$$
where $\cdot$ denotes the product in $A$, and substitute them in the formula for the Nijenhuis tensor, obtaining
$$N(X,Y)=J_S(Y\cdot X)-J_S(X\cdot Y)-J_SY\cdot X+J_SX\cdot Y\;.$$
As $N(\cdot, \cdot)$ is a tensor, the value of $N(X,Y)$ at a point $s$ is determined by the values of $X(s)$ and $Y(s)$; so, if $X(s)=x\in A$ and $Y(s)=y\in A$, then
$$N(X,Y)(s)=s(yx)-s(xy)-(sy)x+(sx)y=0$$
because $A$ is associative.
Therefore the almost complex structure $J_S$ is integrable.\end{proof}

The following corollary is now trivial.

\begin{corol}Every connected component of $S$ is a complex manifold.\end{corol}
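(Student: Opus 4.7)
The plan is to observe that this corollary is essentially an immediate consequence of Proposition \ref{prp_integrabile} combined with the Newlander--Nirenberg theorem, plus the manifold structure established earlier in the section. So there is very little to prove; I would mostly be assembling pieces already in place.

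First I would recall that it was shown above (via the orbit map $\Phi(y)=ys_0y^{-1}$) that every connected component $S'$ of $S$ is an embedded real-analytic submanifold of $A$, with tangent space at $s$ given by $T_sS=\{h\in A\,:\,sh+hs=0\}$. Next, I would verify that $J_S$ restricts to a genuine almost complex structure on each connected component: the discussion preceding Proposition \ref{prp_integrabile} already shows that $L_s$ preserves $T_sS$, and from Lemma \ref{lmm_conti}(3) we have $L_s^2=-I$ on all of $A$, hence in particular on $T_sS$, so $J_S^2=-\mathrm{Id}_{TS}$.

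Finally, Proposition \ref{prp_integrabile} gives that the Nijenhuis tensor of $J_S$ vanishes identically on $S$, and therefore on each connected component. Applying the Newlander--Nirenberg theorem to each component $S'$ equipped with $J_S|_{S'}$ provides holomorphic charts making $S'$ into a complex manifold of complex dimension $\tfrac{1}{2}\dim_{\R}S'$.

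The only thing to double check is that the real dimension of each component is even, which is automatic since $J_S$ is an almost complex structure on it; in fact the computation in the remark after the trace lemma (where $\dim B_j=2k(N-k)$ with $k=(N+j)/2$) shows this dimension explicitly, but we do not even need to invoke it. There is no real obstacle here: the content of the corollary lies entirely in Proposition \ref{prp_integrabile}, and the corollary itself is just the Newlander--Nirenberg packaging of that integrability statement.
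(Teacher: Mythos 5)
Your proposal is correct and follows exactly the route the paper intends: the paper simply declares the corollary ``now trivial'' after Proposition \ref{prp_integrabile}, i.e.\ it is the Newlander--Nirenberg packaging of the integrability statement together with the manifold structure and the $L_s$-invariance of $T_sS$ established earlier. Your write-up just makes those implicit steps explicit, so there is nothing to flag.
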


\begin{rem}Let $S_j=\{s\in S\ :\ \tr K_s=j\}$, then $S_j=\emptyset$ if $j$ is odd or $|j|>N$. We note that each $S_j$ is a complex manifold (maybe disconnected), which is invariant under all internal automorphisms of $A$; moreover $\dim_\R S_j=(N+j)/2$ if it is non-empty.
\end{rem}

\begin{rem}In case $A=\H$, then $N=4$ and $S$ is the unit sphere of $\R^3$; the complex structure $J_S$ is the standard one, induced on $\sfera^2\subset\R^3$ by the vector product. We could also carry on the same computations in the case of the algebra of octonions, but then the almost complex structure is not integrable.\end{rem}

\begin{corol}\label{cor_Cliff2}If $A$ is a Clifford algebra of signature $(p,q)$ with $p-q\not\equiv 3\bmod 8$, all the connected components of $S$ are of real dimension $N/2$.\end{corol}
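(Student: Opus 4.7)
By the discussion preceding the corollary, each connected component $S'\subseteq S$ carries a constant value of $\tr T_s$ and has real dimension $(N+\tr T_s)/2$; hence the corollary is equivalent to showing that $\tr T_s=0$ for every $s\in S$. I would prove this by a direct computation of $\tr T_s$ in the multi-index basis of the Clifford algebra, in the spirit of the proof of Corollary~\ref{cor_Cliff1}.

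Fix a basis $\{e_I:I\subseteq\{1,\ldots,n\}\}$ of $A$ with $n=p+q$, and write $s=\sum_I s_I e_I$. Expanding $T_s(e_J)=s\,e_J\,s$ bilinearly and using that $e_Ie_Je_K$ is always a $\pm 1$-multiple of $e_{I\triangle J\triangle K}$, only the pairs with $I=K$ contribute to the diagonal of $T_s$ in this basis. Introducing the commutation sign $\tau(I,J)=(-1)^{|I||J|-|I\cap J|}$ defined by $e_Ie_J=\tau(I,J)\,e_Je_I$, one has $e_Ie_Je_I=\tau(I,J)\,e_I^2\,e_J$, and therefore
$$\tr T_s=\sum_I s_I^2\,e_I^2\sum_J \tau(I,J).$$
Splitting $J=(J\cap I)\cup(J\cap I^c)$ and using $\sum_{J'\subseteq X}(-1)^{|J'|}=0$ for $X\neq\emptyset$, a short case analysis on the parity of $|I|$ shows that the inner sum equals $N$ when $I=\emptyset$, equals $N$ when $I=\{1,\ldots,n\}$ and $n$ is odd, and vanishes otherwise. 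Letting $\omega=e_1\cdots e_n$ denote the pseudoscalar, one obtains $\tr T_s=Ns_\emptyset^2$ when $n$ is even and $\tr T_s=Ns_\emptyset^2+N\omega^2 s_\omega^2$ when $n$ is odd.

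Corollary~\ref{cor_Cliff1} gives $s_\emptyset=0$ for every $s\in S$, which disposes of the scalar term; when $n$ is even there is nothing further to do. When $n$ is odd, the hypothesis on $p-q$ implies that $\omega$ is central with $\omega^2=+1$, so the idempotents $(1\pm\omega)/2$ produce a direct-sum decomposition $A=A_+\oplus A_-$ into two-sided ideals, each of which is a matrix algebra over $\R$ or $\H$. Writing $s=s_++s_-$ with $s_\pm\in A_\pm$, the equation $s^2=-1$ separates as $s_\pm^2=-1_\pm$; the standard matrix-algebra fact that a square root of $-1$ in $M_k(\R)$ or $M_k(\H)$ has vanishing identity-coefficient in any real basis then forces the coefficient of $1_\pm=(1\pm\omega)/2$ in $s_\pm$ to be zero. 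Assembling, both the $1$- and the $\omega$-components of $s$ vanish in the Clifford basis, so $s_\omega=0$ and $\tr T_s=0$.

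The main hurdles I foresee are the combinatorial evaluation of $\sum_J\tau(I,J)$ (elementary but needing a careful parity split between $|I|$ even and $|I|$ odd) and, in the $n$-odd case, the trace argument in $M_k(\H)$, which proceeds via the complexification $M_k(\H)\otimes_\R\C\cong M_{2k}(\C)$ and the identity $\mathrm{tr}_\C=2\Re\,\mathrm{tr}_\H$ to conclude that $M^2=-I$ forces $\Re\,\mathrm{tr}_\H M=0$, and hence that the $1_\pm$-coefficient of $s_\pm$ vanishes.
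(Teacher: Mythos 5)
Your reduction to proving $\tr T_s=0$ and your evaluation of the trace coincide with the paper's: the observation that only $I=K$ contributes to the diagonal, the sign $\tau(I,J)$, and the split of $J$ over $I$ and its complement reproduce exactly the paper's identity $\tr T_s=\sum_I s_I^2e_I^2\bigl(1+(-1)^{|I|+1}\bigr)^{|I|}\bigl(1+(-1)^{|I|}\bigr)^{n-|I|}$, and your case analysis of the inner sum is correct. The genuine gap is the sentence ``the hypothesis on $p-q$ implies that $\omega$ is central with $\omega^2=+1$.'' One has $\omega^2=(-1)^{n(n-1)/2}(-1)^q$, which is $+1$ precisely when $p-q\equiv 1\bmod 4$ and $-1$ when $p-q\equiv 3\bmod 4$; the residue $p-q\equiv 7\bmod 8$ is permitted by the stated hypothesis but gives $\omega^2=-1$, so the idempotents $(1\pm\omega)/2$ do not exist and your decomposition $A=A_+\oplus A_-$ is unavailable. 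No repair is possible there: for $p-q\equiv 7\bmod 8$ the pseudoscalar $\omega$ is itself a central square root of $-1$, hence $\omega\in S$ with $T_\omega=-I$ and $E_1(\omega)=\{0\}$, so $\{\omega\}$ is a zero-dimensional component of $S$ (already visible for $\mathrm{Cl}_{0,1}\cong\C$, where $N/2=1$ but $S$ is two points). The conclusion, and therefore any proof, really requires $p-q\not\equiv 3\bmod 4$; note that the paper's own argument likewise only treats $p-q$ even and $p-q\equiv 1\bmod 4$.

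On the range where your argument does apply, namely $p-q\equiv 1\bmod 4$, your route through the structure theory of $A_\pm$ as matrix algebras is workable but both heavier than necessary and under-justified at one point: the ``standard fact'' that a square root of $-1$ in $M_k(\R)$ or $M_k(\H)$ has ``vanishing identity-coefficient in any real basis'' is not well posed, since the coefficient of the identity depends on the chosen complementary basis; what you actually have is $\tr M=0$, and to convert that into the vanishing of the $e_\pm$-coefficient you must still check that the remaining basis vectors $e_\pm e_I$ are trace-free. The paper bypasses all of this in one line: since $\omega$ is central with $\omega^2=1$, the map $s\mapsto\omega s$ preserves $S$, and the scalar component of $\omega s$ equals $s_\omega$, so Corollary \ref{cor_Cliff1} applied to $\omega s$ gives $2^ns_\omega=\tr L_{\omega s}=0$ directly. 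I would substitute that argument for your idempotent step.
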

\begin{proof}It is enough to show that $\tr K_s=0$ for all $s\in S$. This follows because
\begin{equation}\label{eq_traccia}\tr K_s=2^ns_0^2\;,\end{equation}
so, by Corollary \ref{cor_Cliff1}, $\tr K_s=0$.

We now prove Equation \eqref{eq_traccia}. We consider the basis defined in Corollary \ref{cor_Cliff1}, we fix $s=\sum s_Ie_I$ and we compute
$$K_s(e_J)=\sum_{I,K} s_Is_Ke_Ie_Je_K;.$$
The $e_J$-component of such a sum is clearly given by
$$\sum_I s_I^2e_Ie_Je_I\;,$$
therefore
$$\tr K_s=\sum_J\sum_I s_I^2e_I^2k(I,J)$$
where $k(I,J)\in\{1,-1\}$ is such that
$$e_Je_I=k(I,J)e_Ie_J\;.$$
We note that
$$k(I,J)=(-1)^{|I||J|}(-1)^{|I\cap J|}$$
so, we define $i=|I|$, $k=|I\cap J|$, $j+k=|J|$. We rewrite the previous sum as
$$\tr K_s=\sum_{I}s_I^2e_I^2\sum_{k=0}^i\sum_{j=0}^{n-i}\binom{i}{k}\binom{n-i}{j}(-1)^{i(j+k)}(-1)^k=$$
$$=\sum_{I}s_I^2e_I^2\sum_{k=0}^i\binom{i}{k}(-1)^{(i+1)k}\sum_{j=0}^{n-i}\binom{n-i}{j}(-1)^{ij}=\sum_{I}s_I^2e_I^2((-1)^{i+1}+1)^i((-1)^i+1)^{n-i}$$
$$=\left\{\begin{array}{lcr}2^ns_0^2&\textrm{ if }&2\mid n\\2^ns_0^2+2^n\omega^2s_\omega^2&\textrm{ if }&2\not\mid n\end{array}\right.\;,$$
where $\omega=e_1\cdot\ldots\cdot e_n$ and $s_\omega$ is the corresponding coefficient. Suppose that the Clifford algebra has $p$ generators that square to $1$ and $q$ that square to $-1$, so that $n=p+q$. It is an easy computation to show that $\omega^2=-1$ if $p-q\equiv 3\bmod 4$ and $\omega^2=+1$ if $p-q\equiv 1\bmod 4$. So
$$\tr K_s=\left\{\begin{array}{lcr}2^ns_0^2&\textrm{ if }&2\mid p-q\\2^n(s_0^2-s_\omega^2)&\textrm{ if }&p-q\equiv 3\bmod 4\\2^n(s_0^2+s_\omega^2)&\textrm{ if }&p-q\equiv 1\bmod 4\end{array}\right.\;.$$
Moreover, if $n$ is odd, $\omega$ is in the center of $A$, so $(\omega s)^2=\omega^2s^2$; therefore, if $p-q\equiv 1\bmod 4$, the map $s\mapsto \omega s$ sends $S$ to itself, but $\tr L_{\omega s}=2^ns_\omega$, so $s_\omega=0$ when $p-q\equiv 1\bmod 4$.
\end{proof}

\begin{rem}In the case $p-q\equiv 3\bmod 8$, the trace of $K_s$ is $-2^ns_\omega^2$ and, as it is given by $\dim E_+(s)-\dim E_-(s)$, we know it is an integer; therefore, there are only a finite number of possibilities for $s_\omega$, giving a partition of $S$ into families of connected components with different dimensions.\end{rem}

\begin{rem}Corollaries
\ref{cor_Cliff1},
\ref{cor_Cliff2}
imply that, for a Clifford algebra $A$ with signature $(p,q)$ with $p-q\not\equiv3\bmod 8$, the map
$$\C_+\times S \ni (x +iy,s) \to x+sy \in A\setminus \R$$
 is a proper smooth embedding, whose image is a
$(N+4)/2$-dimensional real manifold on which the complex structure $J_S$
can be extended as an integrable complex structure, because to every
point we can associate a unique element of $S$. We denote by $Q$ the closure in $A$ of the image of this embedding.
\end{rem}

Given $a\in Q\setminus\R$, obtained from $(z, s)$ through the map described above, we define
$$\mathrm{tr}(a)=\mathsf{Re}(z)\qquad |a|=|z|\;.$$
Then, every such $a$ satisfies a real quadratic equation, namely
$$x^2-2\mathrm{tr}(a)x+|a|^2$$
(compare with \cite{GP1}*{Proposition 1, (3)}). These two definitions can be extended to $\R$ in a trivial way, but, in general, they do not come from a trace and a norm on $A$, which are induced by a $\star$-involution.

\subsection{Slice-regular functions on a real associative algebra}\label{sregA}

In this section, we want to obtain an analogue of Lemma \ref{lmm_hol2} for a general real associative algebra, employing the complex structure $J_S$ defined above; we start by recalling some relevant definitions.

\medskip

Let $\R_n$ be the Clifford algebra over $n$ units, with the notation used above; we identify $\R^{n+1}$ with $\Span\{e_0, \ldots, e_n\}$ in  $\R_n$ and we note by $\sfera^{n-1}$ the unit sphere of $\Span\{e_1,\ldots, e_n\}\subseteq\R_n$; quite obviously $\sfera^{n-1}$ is a subset of the square roots of $-1$ in $\R_n$.

The first attempt at extending the theory of slice-regular functions from quaternions (and octonions) to other algebras was made in \cite{GS3}.

In \cite{CSS}, Colombo, Sabadini and Struppa defined the concept of \emph{slice monogenic function}: let $U\subset\R^{n+1}$ be an open domain and $f:U\to\R_n$ a real differentiable function, the function $f$ is called left slice monogenic if, for every $I\in\sfera^{n-1}$, we have
$$\left(\frac{\de}{\de x}+I\frac{\de}{\de y}\right)f(e_0x+Iy)=0\qquad \forall x,\ y\in\R^2\textrm{ s.t. } e_0x+Iy\in U\;.$$

\medskip

In \cite{GP1}, Ghiloni and Perotti proved that, when $U$ intersects the line generated by $e_0\in\R_n$, every such a function can be extended to a larger set inside $\R_n$. Indeed, for any real alternative algebra $A$, they define a set $\sfera_A$ of units, which contains, in the Clifford case, the sphere $\sfera^{n-1}$, and consider a map $\pi:\C\times\sfera_A\to A$ given by $\pi(x+iy,u)=x+uy$, then, any slice monogenic function on $U$ extends to a function $f:V\to A$, where $V=\cU\times\sfera_A$ is a saturated set for $\pi$. Such a function is induced, in the sense of Proposition \ref{prp_GP}, by a holomorphic function $F:\cU\to \C\otimes A$, hence we call it a (left) slice regular function.

Such extension still satisfies that
$$\left(\frac{\de}{\de x}+I\frac{\de}{\de y}\right)f(x+Iy)=0\qquad \forall x+iy\in \cU$$
for every $I\in\sfera^{n-1}$ and, in fact, for every $I\in\sfera_A$. Moreover, we can find $\alpha,\beta:\cU\to A$ such that
\begin{enumerate}
\item $\alpha(\bar{z})=\alpha(z)$
\item $\beta(\bar{z})=-\beta(z)$
\item $f(\pi(z,u))=\alpha(z)+u\beta(z)$
\item $\partial_x\alpha=\partial_y\beta$ and $\partial_y\alpha=-\partial_x\beta$.
\end{enumerate}

\begin{rem}When $A$ is an associative algebra, the set $\sfera_A$ is properly contained in the set $S$ of square roots of $-1$ in $A$.\end{rem}

Let $A$ be an associative algebra and $S$ the set of square roots of $-1$ in $A$; as in Appendix \ref{hol_quat}, we define the map
$$\pi:\C\times S\to A$$
by $\pi(x+iy,s)=x+sy$ and its extension to the tensor product $\C\otimes A$,
$$\pi:\C\otimes A\times S\to A$$
by asking that $\pi(z\otimes a, s)=\pi(z,s)a$ and imposing linearity in the first component.

We define the involutions
$$\sigma:\C\times S \to \C\times S\qquad \tau:\C\otimes A\times S\to\C\otimes A\times S$$
by asking
$$\sigma(z,s)=(\bar{z},-s)\qquad \tau(z\otimes a, s)=(\bar{z}\otimes a, -s)$$
and imposing, for $\tau$, linearity in the first component.

Thanks to what we proved in the previous pages, we obtain a generalization of Lemma \ref{lmm_hol2}.

\begin{propos}\label{prp_tensore} Let $A$ be a real associative algebra, $\cU\subseteq\C$ an open set and set $V=\pi(\cU\times\sfera_A)$; then for every slice regular function $f:V\to A$ there exists a function $$\mathscr{F}:\cU\times S\to\C\otimes A\times S$$
such that
\begin{enumerate}
\item $\mathscr{F}\circ\sigma=\tau\circ\mathscr{F}$
\item $\mathscr{F}$ is holomorphic
\item $\mathscr{F}$ is the identity on the second component
\item $f\circ\pi(z,s)=\pi\circ\mathscr{F}(z,s)$ for all $(z,s)\in\cU\times\sfera_A$.
\end{enumerate}
In particular, $f$ can be extended as a slice regular function to the set $\pi(\cU\times S)$.
\end{propos}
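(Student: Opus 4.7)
The plan is to piggyback entirely on the Ghiloni--Perotti stem function: by Proposition \ref{prp_GP}, the slice regular $f:V\to A$ is induced by a unique holomorphic $F:\cU\to\C\otimes A$ satisfying $F\circ\sigma=\overline{F}$ and $f\circ\pi(z,u)=\pi(F(z),u)$ for $u\in\sfera_A$. Since $F$ is defined on all of $\cU$ (independently of $u$), the natural candidate is simply
$$\mathscr{F}(z,s)=(F(z),s),\qquad (z,s)\in\cU\times S,$$
which uses $S$ as a passive parameter.

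Properties (3) and (4) follow immediately from this definition together with the defining property of $F$. For (1), one unpacks:
$$\mathscr{F}(\sigma(z,s))=(F(\bar z),-s)=(\overline{F(z)},-s)=\tau(F(z),s)=\tau(\mathscr{F}(z,s)),$$
using $F\circ\sigma=\overline{F}$. For (2), I would equip $\cU\times S$ with the product complex structure $J_0\oplus J_S$ and $\C\otimes A\times S$ with $(J_0\otimes I)\oplus J_S$; integrability of $J_S$ is Proposition \ref{prp_integrabile}, and on such a product manifold a map of the form $(z,s)\mapsto(F(z),s)$ is holomorphic as soon as $F$ is holomorphic in $z$, which is the content of Proposition \ref{prp_GP}.

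For the extension to $\pi(\cU\times S)$, define $\tilde f(\pi(z,s))=\pi(F(z),s)$ for every $(z,s)\in\cU\times S$, and check well-definedness. If $\pi(z_1,s_1)=\pi(z_2,s_2)\notin\R$, then by Remark \ref{rem_iperpiano} the representation with $\Im z>0$ is unique in $\C_+\times S$, so the only other preimage under $\pi$ is $\sigma(z_1,s_1)=(\bar z_1,-s_1)$; expanding $\pi$ on $\C\otimes A$ componentwise shows that $\pi(F(z),s)=\pi(\overline{F(z)},-s)$, so the symmetry $\tau\circ\mathscr{F}=\mathscr{F}\circ\sigma$ proved above guarantees $\tilde f$ is unambiguous. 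For $a=\pi(z,s)\in\R$ one has $z\in\R$, so $F(z)=\overline{F(z)}$ lies in the real part $1\otimes A$ of $\C\otimes A$, and $\pi(F(z),s)$ is independent of $s$. Thus $\tilde f$ is well-defined, agrees with $f$ on $V$ since $\sfera_A\subseteq S$, and is slice regular in the natural stem-function sense because it is induced by the same holomorphic $F$.

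The only real obstacle is the well-definedness of $\tilde f$, i.e.\ making sure that the ``extra'' square roots of $-1$ in $S\setminus\sfera_A$ do not introduce new relations in $\pi$ that the stem function $F$ fails to respect; as sketched, this is handled cleanly by the injectivity statement of Remark \ref{rem_iperpiano} together with the involution-equivariance of $\mathscr{F}$. Everything else is either a direct rewriting of Proposition \ref{prp_GP} or an appeal to the integrability of $J_S$.
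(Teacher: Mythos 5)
Your proposal is correct and follows essentially the same route as the paper: define $\mathscr{F}(z,s)=(F(z),s)$ with $F$ the Ghiloni--Perotti stem function, verify properties (1)--(4) directly, and obtain well-definedness of the extension from the injectivity of $\pi$ on $\C_+\times S$ given by Remark \ref{rem_iperpiano}. The paper states this more tersely, but your added detail (the explicit check of the $\sigma$--$\tau$ equivariance and of the real fibre) is exactly what the paper's ``we immediately see'' leaves implicit.
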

\begin{proof} By definition, we have a holomorphic function $F:\cU\to \C\otimes A$ such that $\pi(F(z),s)=f(\pi(z,s))$ for all $(z,s)\in\cU\times\sfera_A$. We define
$$\mathscr{F}(z,s)=(F(z),s)$$
and we immediately see that the requests of the Proposition are fulfilled. To prove that $f$ can be extended as a function on $\pi(\cU\times S)$, we just need to check that the relation
$$f\circ\pi=\pi\circ\mathscr{F}$$
gives a well-posed extension.

This is true because the map $\pi$ is injective from $\C_+\times S$ to $A$, as follows from Remark \ref{rem_iperpiano}.
\end{proof}

\subsection{A guiding example: the classical twistor transform}\label{guiding_example}

Before turning to the incidence variety in full generality, we illustrate Main Theorem B, and in particular part (iii), in the classical case $A=\H$; a reader unfamiliar with the quaternionic picture may keep this example in mind through the rest of the paper. We refer to Appendix \ref{hol_quat} for the derivation, via two explicit Segre embeddings, and for the proof.

\begin{lemma}\label{lmm_twist} Let $V=\pi(\cU\times\sfera)\subseteq\H$ be an axially symmetric domain and $f:V\to\H$ a slice-regular function. Then there exists a unique function
$$\widetilde{F}:\cU\times\CP^1\to\CP^3$$
such that
\begin{itemize}
\item $\widetilde{F}$ is holomorphic;
\item $\widetilde{F}$ sends the complex spheres $\{z\}\times\CP^1$ into lines of $\CP^3$;
\item $f\circ\rho=\rho_1\circ \widetilde{F}$.
\end{itemize}
In particular, $\widetilde{F}$ can be interpreted as a holomorphic function from $(V\setminus\R, \J)$ to $\CP^3$.
\end{lemma}

This is the twistor transform of Gentili, Salamon and Stoppato \cite{GSS}, and it fits into the expanded diagram
$$\xymatrix{\cU\times\sfera\ar[dddd]_\pi\ar@{^{(}->}[dr]\ar[rrrr]^-{\mathscr{F}=(F,\mathrm{Id}_{\sfera})} &&&&\C\otimes \H\times\sfera\ar[dddd]^\pi\ar[ld]^-{\cong}\\
& \C\times\CP^1\ar[d]_{S_1}\ar@{-->}[rrdd]^{\widetilde{F}}& & \C^4\times\CP^1\ar[d]^{S_2}& \\
& \CP^3\ar[d]_{\rho_1}& & \CP^9\ar[d]^{\delta_1}& \\
&\H& & \CP^3\ar[rd]^{\rho_1}& \\
V\ar[rrrr]^f\ar@{^{(}->}[ur]&  && & \H}$$
obtained by factorizing, through two Segre embeddings $S_1,S_2$, the vertical arrows of the basic square \eqref{diagramma1}. Here $S=S(\H)\simeq\CP^1$ plays the role of $K$ in Main Theorem B, and $\widetilde F$ is exactly the map of part (iii), specialized to the quaternionic case; the maps $\rho,\rho_1,\delta_1$ and the two Segre embeddings are recalled in detail in Appendix \ref{hol_quat}, \S\ref{fixing_basis} onward.

\section{The incidence variety}\label{zero_var}

We recall some facts about the space of complex structures on a real vector space; we deem these facts well known, however we were not able to find an appropriate reference, so we present also a sketch of proof. Let $V$ be a real vector space of dimension $2p$.

\begin{teorema}\label{teo_cplstr}
The set
\[
\mathcal{E}_V=\{J\in\mathrm{End}(V)\ :\ J^2=-\mathrm{Id}_V\}
\]
is a smooth embedded submanifold of $\mathrm{End}(V)$ and
\[
T_J\mathcal{E}_V=\{A\in\mathrm{End}(V): AJ+JA=0\}.
\]
The map $\mathscr{L}_J:T_J\mathcal{E}_V\to T_J\mathcal{E}_V$ defined by $\mathscr{L}_J(A)=JA$ gives $\mathcal{E}_V$ an integrable complex structure. With this structure, $\mathcal{E}_V$ is biholomorphic to
\[
\mathcal{C}_V=\{W\in\mathrm{Gr}_\C(p,\C\otimes V)\ :\ W\cap\overline{W}=\{0\}\}
\]
via the map
\begin{equation}\label{eq_identificazione}
\mathcal{E}_V\ni J\mapsto \mathcal{W}(J)=\{1\otimes v+i\otimes Jv\ :\ v\in V\}\in\mathcal{C}_V,
\end{equation}
which sends $J$ to the $(-i)$-eigenspace of its complexification.
\end{teorema}
\begin{proof}
We first prove that $\mathcal{E}_V$ is an embedded submanifold of $\mathrm{End}(V)$. Fix $J\in\mathcal{E}_V$ and set
\[
\mathfrak m_J=\{A\in\mathrm{End}(V):AJ+JA=0\},\qquad
\mathfrak h_J=\{A\in\mathrm{End}(V):AJ-JA=0\}.
\]
The involution $A\mapsto JAJ$ gives the decomposition
\[
\mathrm{End}(V)=\mathfrak m_J\oplus\mathfrak h_J.
\]
If $J'\in\mathcal{E}_V$ is close to $J$, then $Y=I-J'J$ is invertible and
\[
YJ=(I-J'J)J=J+J'=J'(I-J'J)=J'Y.
\]
Thus $J'=YJY^{-1}$. Therefore, near $J$, every complex structure is obtained by conjugating $J$.

Consider the local map
\[
\Psi_J:\mathfrak m_J\supset U\longrightarrow \mathrm{End}(V),\qquad
A\longmapsto (I+A)J(I+A)^{-1}.
\]
Its differential at $0$ is
\[
D\Psi_J|_0(A)=AJ-JA=2AJ,
\]
which is an automorphism of $\mathfrak m_J$. Hence $\Psi_J$ gives local embedded coordinates for $\mathcal{E}_V$, and the tangent space is $\mathfrak m_J$.

We now identify this manifold with the Grassmannian model. If $J\in\mathcal{E}_V$, the subspace $\mathcal{W}(J)$ is the $(-i)$-eigenspace of the complexification of $J$ and satisfies
\[
\C\otimes V=\mathcal{W}(J)\oplus\overline{\mathcal{W}(J)}.
\]
Conversely, if $W\in\mathcal{C}_V$, then every $v\in V$ decomposes uniquely as
\[
1\otimes v=w+\overline w,
\qquad w\in W,
\]
and there is a unique endomorphism $J_W$ such that
\[
w=\frac12(1\otimes v+i\otimes J_Wv).
\]
This endomorphism satisfies $J_W^2=-I$ and $\mathcal{W}(J_W)=W$. These two constructions are inverse to each other and are smooth, indeed real analytic, in the usual affine charts of the Grassmannian.

Finally, the complex structure of the Grassmannian restricts to $\mathcal{C}_V$. In the coordinates above its pull-back is precisely
\[
A\longmapsto JA
\]
on $T_J\mathcal{E}_V$. Hence $\mathcal{E}_V$ is a complex manifold and the map $J\mapsto\mathcal{W}(J)$ is biholomorphic.
\end{proof}

It is a simple matter of checking definitions to note that $(S, J_S)$ is then a complex submanifold of $\mathcal{E}_V$, upon identifying $s$ with $L_s\in\mathrm{End}(A)$: the complex structure $J_S$ coincides, on $T_sS$, with the restriction of the map $\mathscr{L}_{L_s}$ and, by associativity, $L_sL_h=L_{sh}$ for $s\in S$, $h\in T_sS$.

Hence, through the map $\mathcal{W}$, we identify $S$ with a complex submanifold $\mathcal{S}=\{\mathcal{W}(L_s)\ :\ s\in S\}$ of an open domain of the complex Grassmannian $\mathrm{Gr}_{\C}(n, \C\otimes A)$.

\medskip

We have all the ingredients to generalize the results of Section 3 in \cite{M1} (see, in particular, Theorem 3.3).

\begin{teorema}\label{teo_zero_var}
Let $A$ be an associative algebra and let $K\subset S$ be a compact complex subvariety. Define
\[
Z_K=\{w\in \C\otimes A\ :\ \pi(w,s)=0\ \textrm{ for some }s\in K\},
\]
\[
\mathfrak{Z}_K=\{(w,s)\in\C\otimes A\times K\ :\ \pi(w,s)=0\}.
\]
Then $\mathfrak{Z}_K$ is a complex subvariety of $\C\otimes A\times K$, and the projection
\[
\mathfrak{Z}_K\longrightarrow Z_K
\]
is proper and surjective. In particular, $Z_K$ is a complex subvariety of $\C\otimes A$.
\end{teorema}
\begin{proof}
Write $w\in \C\otimes A$ as $1\otimes a+i\otimes b$, with $a,b\in A$. Then
\[
\pi(w,s)=a+sb.
\]
The equality $\pi(w,s)=0$ is equivalent to $a=-sb$, hence $b=sa$, and therefore to
\[
w=1\otimes a+i\otimes sa\in \mathcal{W}(L_s).
\]
Thus, under the holomorphic embedding
\[
\C\otimes A\times K\ni (w,s)\longmapsto (w,\mathcal{W}(L_s))\in
\C\otimes A\times\mathrm{Gr}_{\C}(p,\C\otimes A),
\]
the space $\mathfrak{Z}_K$ is identified with the restriction to $\mathcal{W}(L(K))$ of the tautological bundle of the Grassmannian. Hence it is a complex subvariety of $\C\otimes A\times K$.

Since $K$ is compact, the projection $\C\otimes A\times K\to\C\otimes A$ is proper. Its restriction to the closed analytic subspace $\mathfrak{Z}_K$ is therefore proper and holomorphic, and its image is $Z_K$. By Remmert's proper mapping theorem, $Z_K$ is a complex subvariety of $\C\otimes A$.
\end{proof}

\begin{rem}
Without the compactness of $K$, the conclusion about the image need not hold. In particular, for $K=S$ the set-theoretic image
\[
Z_S=\{w\in\C\otimes A:\pi(w,s)=0\ \textrm{ for some }s\in S\}
\]
may fail to be closed, and therefore need not be a complex analytic subset of $\C\otimes A$.
\end{rem}

For $a\in A$ and $K\subset S$, we also define
\[
Z_K(a)=\{w\in \C\otimes A\ :\ \pi(w,s)=a\ \textrm{ for some }s\in K\},
\]
\[
\mathfrak{Z}_K(a)=\{(w,s)\in\C\otimes A\times K\ :\ \pi(w,s)=a\}.
\]
If $K$ is compact, then $Z_K(a)$ is a complex subvariety of $\C\otimes A$, obtained from $Z_K$ by translation by the real vector $1\otimes a$.

The spaces $\mathfrak Z_S(a)$, defined set-theoretically with $K=S$, form a real-analytic foliation of $\C\otimes A\times S$ with complex leaves; however, their projected images in $\C\otimes A$ are analytic only under a properness assumption, for instance when the chosen parameter space $K$ is compact.

Unlike the case of the quaternions, $Z_K$ is not necessarily a hypersurface in $\C\otimes A$, so it is not the zero set of a single holomorphic function. We can nonetheless use the compact case to obtain some information on the behaviour of slice-regular functions, for example on the structure of their zeroes. For a more detailed study of the zeroes of a slice-regular function on a real algebra, the interested reader could see the recent works of Ghiloni, Perotti and Stoppato \cites{GPS1,GPS2}.

\begin{propos}\label{prp_zero}
Assume that $S$ is compact. With the previous notation, let $V=\pi(\cU\times S)$, with $\cU$ intersecting the real axis, and let $f:V\to A$ be a slice-regular function. Then the connected components of $f^{-1}(0)$ are either isolated real points or complex subvarieties of $\pi(\cU_+\times S)$, where $\cU_+=\cU\cap\C_+$, biholomorphic to subvarieties of $S$.
\end{propos}
\begin{proof}
Let $\mathscr F(z,s)=(F(z),s)$ be the holomorphic lift associated with $f$. Since $S$ is compact, $Z_S$ is a complex subvariety of $\C\otimes A$ by Theorem \ref{teo_zero_var}. We have
\[
\pi(z,s)\in f^{-1}(0)\qquad\Longleftrightarrow\qquad F(z)\in Z_S\ \textrm{ and }\ (F(z),s)\in\mathfrak Z_S.
\]
Thus $F^{-1}(Z_S)$ is an analytic subset of the Riemann surface $\cU$. It is either discrete or all of $\cU$. If it is all of $\cU$, then, since $\cU$ meets the real axis and $F(x)$ is real for real $x$, the only possibility is $F\equiv 0$, hence $f\equiv0$.

Outside this trivial case, $F^{-1}(Z_S)$ is discrete. For each $z_j\in F^{-1}(Z_S)$ the fiber
\[
Y_j=\{s\in S:(F(z_j),s)\in\mathfrak Z_S\}
\]
is a complex subvariety of $S$. If $z_j\notin\R$, then $\pi$ embeds $\{z_j\}\times Y_j$ as a complex subvariety of $\pi(\cU_+\times S)$; if $z_j\in\R$, it is mapped to the isolated real point $z_j$.
\end{proof}

\begin{rem}
If $\cU$ does not intersect the real axis and a compact parameter space is fixed, it could also happen that $F(\cU)$ is contained in the corresponding zero variety. In that case, for each $z\in \cU$, we define the set
$$S_z=\{s\in S\ :\ \mathscr{F}(z,s)\in\mathfrak{Z}_S(0)\}$$
and we know that it is not empty; in general, if $F(z)$ is invertible, $S_z$ is a singleton, hence the set
$$\{(z,s)\in\cU\times S\ :\ s\in S_z\}$$
is, outside a discrete union of subvarieties of $S$, a Riemann surface in $\cU\times S$.
\end{rem}

\subsection{The incidence correspondence of a slice-regular function}\label{incidence_corresp}

Theorem \ref{teo_zero_var} holds for an arbitrary compact complex subvariety $K\subseteq S$, not only for $K=S$ or for the orthogonal locus $S_0$ of Section \ref{orto}. This makes the construction modular: every compact family of complex structures produces a zero theory governed by Remmert's theorem. We make this precise by pulling back the incidence variety $\mathfrak{Z}_K$ along the holomorphic lift of a slice-regular function.

Let $V=\pi(\cU\times S)$ be an axially symmetric domain and let $f:V\to A$ be a slice-regular function, induced by the holomorphic stem function $F:\cU\to\C\otimes A$ as in Section \ref{sregA}. For a compact complex subvariety $K\subseteq S$ we define the \emph{incidence correspondence}
\[
\mathcal{C}_{F,K}=\{(z,s)\in\cU\times K\ :\ F(z)\in\mathcal{W}(L_s)\}=\{(z,s)\in\cU\times K\ :\ \pi(F(z),s)=0\}.
\]

\begin{teorema}\label{teo_incidence}
Let $K\subseteq S$ be a compact complex subvariety and let $F:\cU\to\C\otimes A$ be a holomorphic stem function inducing the slice-regular function $f$. Then:
\begin{enumerate}
\item $\mathcal{C}_{F,K}$ is a complex analytic subset of $\cU\times K$;
\item the projection $\mathrm{pr}_1:\mathcal{C}_{F,K}\to\cU$ is proper, and its image is the analytic subset $F^{-1}(Z_K)$ of $\cU$;
\item the set $\pi(\mathcal{C}_{F,K})\subseteq\pi(\cU\times K)$ is exactly the zero set of $f$ restricted to $\pi(\cU\times K)$.
\end{enumerate}
\end{teorema}
\begin{proof}
The map $\mathscr{F}:\cU\times K\to\C\otimes A\times K$, $\mathscr{F}(z,s)=(F(z),s)$, is holomorphic, since $F$ is holomorphic and $K$ carries the complex structure induced from $S$. By Theorem \ref{teo_zero_var}, $\mathfrak{Z}_K=\{(w,s):\pi(w,s)=0\}$ is a complex subvariety of $\C\otimes A\times K$, and by definition
\[
\mathcal{C}_{F,K}=\mathscr{F}^{-1}(\mathfrak{Z}_K).
\]
Thus $\mathcal{C}_{F,K}$ is the preimage of an analytic set under a holomorphic map, hence analytic in $\cU\times K$. This proves $(1)$.

Since $K$ is compact, the first projection $\cU\times K\to\cU$ is proper; its restriction to the closed analytic subset $\mathcal{C}_{F,K}$ is therefore proper. For $z\in\cU$ the fiber $\mathrm{pr}_1^{-1}(z)$ is non-empty if and only if $F(z)\in\mathcal{W}(L_s)$ for some $s\in K$, that is, if and only if $F(z)\in Z_K$. Hence the image is $F^{-1}(Z_K)$, which is analytic in $\cU$ because $Z_K$ is a complex subvariety of $\C\otimes A$ by Theorem \ref{teo_zero_var} and $F$ is holomorphic. This proves $(2)$.

Finally, $\pi(z,s)\in f^{-1}(0)$ if and only if $f(\pi(z,s))=\pi(\mathscr{F}(z,s))=\pi(F(z),s)=0$, that is, if and only if $(z,s)\in\mathcal{C}_{F,K}$. Therefore $\pi$ maps $\mathcal{C}_{F,K}$ onto $f^{-1}(0)\cap\pi(\cU\times K)$, which is $(3)$.
\end{proof}

\begin{rem}\label{rem_incidence}
Proposition \ref{prp_zero} is the special case $K=S$ of Theorem \ref{teo_incidence}, analysed fiberwise: when $\cU$ meets the real axis and $f\not\equiv0$, the image $F^{-1}(Z_S)$ is discrete, and the fibers of $\mathrm{pr}_1$ over its points are the complex subvarieties $Y_j\subseteq S$ of that proof. The advantage of the relative point of view is that it isolates the role of the parameter space: any compact complex subvariety $K\subseteq S$ produces, through the proper holomorphic projection of $\mathcal{C}_{F,K}$, a zero theory controlled by Remmert's theorem, regardless of whether $Z_K$ is a hypersurface. This relative incidence geometry is the complex-analytic counterpart of the zero sets of slice functions studied by Ghiloni, Perotti and Stoppato \cites{GPS1,GPS2}.
\end{rem}

We close this section by showing one interesting property of the set-theoretic zero variety $Z_S$: it absorbs products from the right and, if $S$ is compact, from the left. Given its connection with the zero set of slice-regular functions, this sounds like a very reasonable property, however the proof, at least for the left side, is not trivial; we don't know if the compactness assumption could be dispensed with.

It is worth noticing that the proofs proposed do not use the relationship with slice-regular functions.

\begin{propos}If $w\in Z_S$ and $w'\in \C\otimes A$, then $ww'\in Z_S$.
\end{propos}
\begin{proof}Write $w=1\otimes x+i\otimes y$. Since $w\in Z_S$, then there exists $a\in S$ such that $x+ay=0$, that is $x=-ay$. Let $w'=1\otimes s+i\otimes t$, then
$$ww'=(1\otimes x+i\otimes y)(1\otimes s+i\otimes t)=1\otimes(xs-yt)+i\otimes(xt+ys)=-1\otimes (ays+yt)+i\otimes (-ayt+ys)$$
so, recalling that $a^2=-1$, we get $\pi(ww',a)=-(ays+yt)+a(-ayt+ys)=-ays-yt-a^2yt+ays=-yt+yt=0$, i.e. $ww'\in Z_S$.\end{proof}

\begin{propos}Suppose $S$ is compact. If $w\in Z_S$ and $w'\in \C\otimes A$, then $w'w\in Z_S$.\end{propos}
\begin{proof}
Write $w=1\otimes x+i\otimes y$. Since $w\in Z_S$, then there exists $a\in S$ such that $x+ay=0$, that is $x=-ay$. Let $w'=1\otimes s+i\otimes t$, then
$$w'w=(1\otimes s+i\otimes t)(1\otimes x+i\otimes y)=1\otimes(sx-ty)+i\otimes (sy+tx)$$
$$=-1\otimes(say+ty)+i\otimes(sy-tay)=(-1\otimes (sa+t) +i\otimes (ta-s))1\otimes y\;.$$
We note that $(sa+t)a=ta-s$, so $sa+t$ is invertible if and only if $ta-s$ is so.

If $ta-s$ is invertible, then we set
$$b=(sa+t)(ta-s)^{-1}$$
and we note that
$$b^2=(sa+t)(ta-s)^{-1}(sa+t)(ta-s)^{-1}=$$
$$=(s-ta)a(ta-s)^{-1}(ta-s)(-a)(ta-s)^{-1}=$$
$$=-(ta-s)a(-a)(ta-s)^{-1}=-1\;.$$
Moreover, $\pi(w'w,b)=0$, hence $w'w\in Z_S$.

If $ta-s$ is not invertible, consider the set
$$X_a=\{(s,t,b)\in A\times A \times A\ :\ sa+t=b(ta-s)\ \textrm{and } b^2=-1\}\;.$$
It is easy to show that $X_a$ is a real analytic space in $\R^{3N}$, hence closed; we consider also the map $h:A^3\to \C\otimes A$ given by $h(s,t,b)=1\otimes s+i\otimes t$.

We note that the set $Y$ of invertible elements in $A$ is an open dense subset and its complement is a real-analytic subset; since, for fixed $a$, the map $1\otimes s+i\otimes t\mapsto ta-s$ is a real-linear isomorphism of $\C\otimes A$ onto $A$, the set
$$W=\{\omega=1\otimes s+i\otimes t\in \C\otimes A\ :\ ta-s\in Y\}$$
is an open dense subset of $\C\otimes A$. For $\omega=1\otimes s+i\otimes t\in W$, the element $ta-s$ is invertible, so setting $b=(sa+t)(ta-s)^{-1}$ we have $b^2=-1$ by the computation above and $sa+t=b(ta-s)$; hence $(s,t,b)\in X_a$, that is $h^{-1}(\omega)\cap X_a\neq\emptyset$. Therefore, $h(X_a)$ contains $W$. Now, let $w_0\in \C\otimes A\setminus W$ and take $\{w_j\}\subset W$ such that $w_j\to w_0$; let $b_j\in S$ be such that $(w_j,b_j)\in X_a$. By compactness of $S$, up to taking a subsequence, we have that $b_j\to b_0\in S$ and, as $X_a$ is closed, $(w_0,b_0)\in X_a$.

This implies that $h(X_a)=\C\otimes A$; therefore we can always find $b\in S$ such that $sa+t=b(ta-s)$. Hence $\pi(w'w,b)=0$, so $w'w\in Z_S$.
\end{proof}

\section{The twistor transform}\label{stereo_twistor}

In this section, we attempt to produce a generalized version of the construction explained in Appendix \ref{hol_quat}, \S\ref{fixing_basis}, following the idea of Remark \ref{rem_pvs}.

Given $u\in S$, we define on $A$ the structure of a complex vector space by setting, for $\lambda\in\C$, $\lambda=x+iy$,
$$\lambda\cdot a=(x+uy)a\;.$$
Let us denote by $E$ the complex vector space $(A,L_u)$ and by $\gamma:E\to A$ the real isomorphism between $E$ and $A$; let $D$ be the subset of zerodivisors of $A$ (and hence of $E$).

We define the space
$$\mathcal{V}=\{(z,-s)\in E\times S\ :\ \gamma(z)s=u\gamma(z)\}$$
and the projection $p:\mathcal{V}\to S$.

\begin{lemma}
For every $s\in S$, $p^{-1}(-s)=\mathcal{V}_s\times\{-s\}$, where $\mathcal{V}_s$ is a complex linear subspace of $E$. If $u$ does not belong to the center of $A$, then $\mathcal{V}_s$ is of positive dimension for every $s\in S$.
\end{lemma}
\begin{proof}
Let us fix $s\in S$. Then
\[
\mathcal{V}_s=\{z\in E\ :\ R_sL_u\gamma(z)=-\gamma(z)\},
\]
where $R_s(q)=qs$. Since $R_sL_u=L_uR_s$, we have
\[
(R_sL_u)^2=R_s^2L_u^2=(-I)(-I)=I.
\]
Thus $R_sL_u$ is diagonalizable with eigenvalues $\pm1$, and $\mathcal{V}_s$ is the eigenspace for the eigenvalue $-1$. This eigenspace is trivial if and only if $R_sL_u=I$.

If $R_sL_u=I$, then $R_s=-L_u$. Evaluating at $1$ gives $s=-u$; hence $R_u=L_u$, i.e. $qu=uq$ for every $q\in A$. Therefore $u$ is central. If $u$ is not central, no $\mathcal V_s$ is trivial.

Finally, if $z\in \mathcal V_s$, then $\gamma(iz)=L_u\gamma(z)$ and
\[
R_sL_u\gamma(iz)=R_sL_u^2\gamma(z)=-R_s\gamma(z)=-u\gamma(z)=-\gamma(iz).
\]
Equivalently, using the defining equation $\gamma(z)s=u\gamma(z)$, one checks directly that $\gamma(iz)s=u\gamma(iz)$. Hence $iz\in\mathcal V_s$.
\end{proof}

\begin{rem}The map $E\setminus D\ni z\mapsto \gamma(z)^{-1}u\gamma(z)\in S$ is holomorphic\end{rem}

As usual, if $A$ is a Clifford algebra, we have more precise information.

\begin{corol}If $A$ is a Clifford algebra, $\dim_{\C}\mathcal{V}_s=N/4$ for all $s\in S$.\end{corol}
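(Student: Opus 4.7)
The plan is to reduce the claim to the vanishing of a single trace. From the preceding lemma, $\mathcal{V}_s$ is (via $\gamma$) the $(-1)$-eigenspace of the involution $R_sL_u$ on $A$; since $R_sL_u=L_uR_s$, this eigenspace is invariant under $L_u$ and is hence a complex subspace of $E$. Therefore
$$\dim_{\C}\mathcal{V}_s \;=\; \tfrac{1}{2}\dim_{\R}\mathcal{V}_s \;=\; \frac{N-\tr(R_s L_u)}{4},$$
so the corollary is equivalent to the identity $\tr(R_sL_u)=0$.

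To compute this trace I would mimic the basis computation used in the proof of Corollary \ref{cor_Cliff2}. In the standard Clifford basis $\{e_I\}$, writing $u=\sum_I u_Ie_I$ and $s=\sum_K s_Ke_K$, one expands
$$R_sL_u(e_J)=ue_Js=\sum_{I,K}u_Is_K\,e_Ie_Je_K,$$
and observes that $e_Ie_Je_K$ has a nonzero $e_J$-coefficient only when $K=I$, in which case it equals $k(I,J)e_I^2$, with $e_Je_I=k(I,J)e_Ie_J$. Summing over $J$ and invoking the combinatorial identity from the proof of Corollary \ref{cor_Cliff2}, the inner sum $\sum_J k(I,J)$ vanishes except for $I=\emptyset$ (always, contributing $2^n$) and, when $n$ is odd, $I=\omega=\{1,\dots,n\}$ (contributing $2^n\omega^2$). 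Thus
$$\tr(R_sL_u)=2^n u_0 s_0 + 2^n\omega^2 u_\omega s_\omega\,[n\text{ odd}].$$
By Corollary \ref{cor_Cliff1}, $s_0=u_0=0$, so the first summand vanishes; for $n$ even the second is absent, and for $n$ odd with $p-q\equiv 1\pmod{4}$ the argument closing the proof of Corollary \ref{cor_Cliff2}, applied to both $s$ and $u$, gives $s_\omega=u_\omega=0$.

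The delicate case is $n$ odd with $p-q\equiv 3\pmod{4}$, which is exactly the range where Corollary \ref{cor_Cliff2} itself needed to assume $p-q\not\equiv 3\pmod{8}$, and I expect this to be the main obstacle. The cleanest way to dispose of it uniformly is to fall back on the structure theorem for Clifford algebras as (sums of) matrix algebras over $\R$, $\C$, or $\H$: under such an identification $R_sL_u$ corresponds to $u\otimes s^{\top}$ acting on $\mathbb{F}^k\otimes(\mathbb{F}^k)^*$, so that $\tr_{\R}(R_sL_u)=\tr_{\R}(u)\cdot\tr_{\R}(s)$, and each factor vanishes because $u^2=s^2=-I$ forces the spectra of $u$ and $s$ to be purely imaginary; on direct sums one argues block by block.
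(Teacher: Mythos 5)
Your reduction to $\tr(R_sL_u)=0$ and the basis computation of that trace are exactly what the paper intends: its entire proof is the assertion that the trace vanishes ``by a computation very similar to'' the one in Corollary \ref{cor_Cliff2}, and your explicit formula $\tr(R_sL_u)=2^nu_0s_0+2^n\omega^2u_\omega s_\omega$ (second term only for $n$ odd) is the correct outcome of that computation. The cases $n$ even and $p-q\equiv1\pmod 4$ are closed correctly via Corollary \ref{cor_Cliff1} and the $s\mapsto\omega s$ trick. The gap sits exactly where you locate it, but your proposed repair fails. When $p-q\equiv3\pmod 4$ the Clifford algebra is a complex matrix algebra $M_k(\C)$, with the imaginary unit realized by the central element $\omega$; there the identity $\tr_{\R}(R_sL_u)=\tr_{\R}(u)\cdot\tr_{\R}(s)$ is false. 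Since $L_uR_s$ is $\C$-linear for that central complex structure, the correct statement is $\tr_{\R}(L_uR_s)=2\,\mathrm{Re}\bigl(\tr_{\C}(u)\,\tr_{\C}(s)\bigr)$, and the complex traces of $u$ and $s$ are purely imaginary but in general nonzero, so their product is real and nonzero. The factorization of a real trace of $u\otimes s^{\top}$ into a product of real traces is only valid over $\R$.

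In fact no argument can close that case, because the statement itself fails there: if $p-q\equiv3\pmod 4$ then $\omega^2=-1$ and $\omega$ is central, so $\omega\in S$ and $R_\omega L_\omega(x)=\omega x\omega=-x$, i.e.\ $R_\omega L_\omega=-I$ and $\mathcal{V}_\omega=E$ has complex dimension $N/2$, not $N/4$. Your own trace formula detects this: $u=s=\omega$ gives $\tr(R_sL_u)=2^n\omega^2=-N$. So the corollary requires a signature restriction of the same kind as Corollary \ref{cor_Cliff2} (excluding $p-q\equiv3\pmod 4$, under which your computation is already complete), or an extra hypothesis forcing $u_\omega s_\omega=0$. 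This is a defect inherited from the statement (and from the paper's one-line proof) rather than from your strategy, but the structure-theoretic fallback should not be presented as a proof of the remaining case.
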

\begin{proof}We note that $\tr(R_sL_u)=0$ for all $s\in S$: this follows from a computation very similar to the one carried out in the proof of Corollary \ref{cor_Cliff2}. So the dimension of the $(1)$-eigenspace and the dimension of the $(-1)$-eigenspace coincide, hence the desired result.\end{proof}

The construction of the complex structure $J_S$ employed the embedding of $A$ into $\mathrm{End}(A)$ by sending an element $a$ to the endomorphism $L_a$; however, we could as well consider the embedding $a\mapsto R_a$. This map turns out to be antiholomorphic, from $S$ to $\mathcal{V}_A$ (with the notation of Theorem \ref{teo_cplstr}): if $h\in T_sS$, then $sh=-hs$ and
$$\lim_{t\to 0}\frac{R_{s+th}-R_s}{t}=R_h\;,$$
so $L_sh=sh=-hs$ is sent by the differential of the map to $R_{-hs}=-R_sR_h\in T_{R_s}\mathcal{V}_A$ and $-R_sR_h=-\mathscr{L}_{R_s}(R_h)$.

\begin{propos}\label{prp_bundle}
The space $\mathcal{V}$ is a complex analytic subset of $E\times S$. On every locus where $\dim_{\C}\mathcal{V}_s$ is locally constant, the projection $\mathcal V\to S$ is a holomorphic vector bundle.
\end{propos}
\begin{proof}
The complex vector space $E$ is isomorphic to the $i$-eigenspace of $L_u$ in $\C\otimes A$ via the map
\[
v\mapsto 1\otimes v-i\otimes L_uv.
\]
Under this map, the fibers $\mathcal V_s$ are described by linear equations depending holomorphically on $s$. Equivalently, if $E_c(T)$ denotes the $c$-eigenspace of the complexification of $T$, then
\[
\mathcal{V}_s=E_i(L_u)\cap E_{-i}(R_{-s}).
\]
The map $s\mapsto E_{-i}(R_{-s})$ is holomorphic in the Grassmannian wherever the dimension of the intersection is locally constant. Therefore $\mathcal V$ is locally the kernel of a holomorphic family of linear maps, hence a complex analytic subset of $E\times S$; on the constant-rank locus it is the pull-back of the tautological bundle and is a holomorphic vector bundle.
\end{proof}

By a standard procedure, $\mathrm{Gr}_{\C}(E)$ can be embedded as a closed subvariety of a projective space $\CP^M$. 

\subsection{Stereographic charts and sections of \texorpdfstring{$\mathcal V$}{V}} The vector bundle $\mathcal V$ admits a natural local section on a large open subset of $S$. This is the direct analogue, in the present setting, of the stereographic parametrization of the sphere of imaginary units in the quaternionic case. Fix $u\in S$ and set \[ U_u=\{s\in S:\ 1-us\in A^\times\}. \] This is a Zariski open subset of $S$, since $A^\times$ is Zariski open in $A$. \begin{propos}\label{prp_stereo_section} The map \[ \sigma_u:U_u\longrightarrow A,\qquad \sigma_u(s)=1-us \] is a holomorphic section of $\mathcal V$ over $U_u$. Moreover $\sigma_u(s)$ is invertible for every $s\in U_u$. \end{propos} \begin{proof} By definition of $U_u$, the element $\sigma_u(s)=1-us$ is invertible for every $s\in U_u$. We compute \[ (1-us)s=s-us^2=s+u, \] whereas \[ u(1-us)=u-u^2s=u+s. \] Therefore \[ \sigma_u(s)s=u\sigma_u(s), \] which is precisely the defining condition for $\sigma_u(s)$ to belong to $\mathcal V_s$. It remains to check that $\sigma_u$ is holomorphic as a section of $\mathcal V$, that is, with respect to the complex structure $J_S$ on $U_u$ and the complex structure $L_u$ on the fibres of $\mathcal V\subseteq E\times S$. We use the description of $\mathcal V$ given in the proof of Proposition \ref{prp_bundle}: under the holomorphic identification $E\cong E_i(L_u)\subset\C\otimes A$, $v\mapsto 1\otimes v-i\otimes L_uv$, the fibre $\mathcal V_s$ is the holomorphically varying subspace $E_i(L_u)\cap E_{-i}(R_{-s})$. The graph of $\sigma_u$ is \[ \{(s,\,1-us)\ :\ s\in U_u\}\subset \mathcal V, \] and the map $s\mapsto 1\otimes(1-us)-i\otimes L_u(1-us)=1\otimes(1-us)-i\otimes(u-u^2 s)=1\otimes(1-us)-i\otimes(u+s)$ is the restriction to the complex submanifold $U_u\subset(S,J_S)$ of a $\C$-linear-in-$s$ expression, where $s\mapsto s$ is holomorphic by construction of $J_S$ and the coefficients are constant. Hence $\sigma_u$ is a holomorphic section of $\mathcal V$ over $U_u$. \end{proof} \begin{rem} The open set $U_u$ is the natural domain of a generalized stereographic chart. In the quaternionic case, with $u=i$, the condition $1-is\in\mathbb H^\times$ excludes only the point $s=-i$; hence $U_i$ is the usual affine chart on the sphere of imaginary units. \end{rem} The existence of a global section with values in the invertible elements is more restrictive. Indeed, such a section is equivalent to a holomorphic trivialization of a principal bundle. Let $A^\times$ act on $S$ by conjugation, \[ g\cdot s=g^{-1}sg, \] and let $S(u)$ be the connected component, equivalently the conjugacy orbit, containing $u$. The map \[ A^\times\longrightarrow S(u),\qquad g\longmapsto g^{-1}ug \] is a holomorphic principal bundle with structure group \[ C_{A^\times}(u)=\{g\in A^\times:\ gu=ug\}. \] A holomorphic section $\sigma:S(u)\to A^\times$ with \[ \sigma(s)s=u\sigma(s) \] is the same thing as a holomorphic section of this principal bundle. \begin{propos}\label{prp_section_principal_bundle} Let $\Omega\subset S(u)$ be an open subset. The following are equivalent: \begin{enumerate} \item there exists a holomorphic section \[ \sigma:\Omega\longrightarrow \mathcal V \] such that $\sigma(s)\in A^\times$ for every $s\in\Omega$; \item the principal bundle \[ A^\times\longrightarrow S(u),\qquad g\longmapsto g^{-1}ug \] admits a holomorphic section over $\Omega$. \end{enumerate} \end{propos} \begin{proof} If $\sigma(s)\in\mathcal V_s\cap A^\times$, then \[ \sigma(s)s=u\sigma(s), \] hence \[ s=\sigma(s)^{-1}u\sigma(s). \] Thus $\sigma$ is a holomorphic lift of the identity map of $\Omega$ to $A^\times$. Conversely, if $\tau:\Omega\to A^\times$ is a holomorphic map such that \[ s=\tau(s)^{-1}u\tau(s), \] then \[ \tau(s)s=u\tau(s), \] so $\tau(s)\in\mathcal V_s\cap A^\times$ for every $s\in\Omega$. \end{proof} \begin{rem} This interpretation separates the local and the global aspects of the construction. The local section $\sigma_u(s)=1-us$ always exists on the stereographic chart $U_u$. A global nowhere vanishing section, instead, amounts to the holomorphic triviality of the above principal bundle. Thus its existence may have genuine topological or holomorphic obstructions. This is already visible for $A=\mathbb H$, where the corresponding line bundle over $\mathbb P^1$ is the tautological bundle and has no nowhere vanishing holomorphic section on the whole of $\mathbb P^1$. \end{rem}

We fix a basis  $\{v_1,\ldots, v_{N}\}$ of $A$ as a real vector space, with $v_1=1$, $v_2=u$; we define the maps $T_j:E\to E$ by setting
$$T_j(z)=\gamma^{-1}(\gamma(z)v_j)\;.$$
It is easy to check that the $T_j$'s are $\C$-linear maps.

Then, $\pi:\C\otimes A\times S\to A$ is given by
$$\pi((z\otimes q, s))=\gamma(\sigma(s))^{-1}z_u\gamma(\sigma(s))q=\gamma(\sigma(s))^{-1}\gamma( zT_q (\sigma'(s)))$$
where $T_q=\sum q_jT_j$ and $q=\sum q_jv_j$.

Therefore, we have a generalization of Lemma \ref{lmm_twist}.

\begin{propos}\label{prp_twist}Let $\sigma$ be a parametrization of $S$. If $V=\pi(\cU\times S)$ an axially symmetric domain and $f:V\to A$ a slice-regular function, then there exists a unique holomorphic function
$$\widetilde{F}:\cU\times S\to\mathbb{P}(E\oplus E)$$
which induces $f$.\end{propos}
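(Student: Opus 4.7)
The plan is to imitate the factorization carried out in Section \ref{hol_quat} for the quaternionic case, using the parametrization $\sigma$ in place of the explicit coordinates on $\sfera$, and the holomorphic stem function $\mathscr{F}$ supplied by Proposition \ref{prp_tensore} in place of $\mathscr{F}$ in Lemma \ref{lmm_hol2}.

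First, I would introduce the rational map
$$\rho_1 : \mathbb{P}(E\oplus E)\dashrightarrow A,\qquad \rho_1([v:w]) = \gamma(v)^{-1}\gamma(w),$$
defined outside the hypersurface $\{[v:w]:\gamma(v)\in D\}$. By the hypothesis that $\sigma$ takes values in $\mathbb{P}(\mathcal{V}_s\setminus D)$, the combinations which will actually arise avoid this indeterminacy locus. The key identity is that $s=\gamma(\sigma'(s))^{-1}u\,\gamma(\sigma'(s))$, so that for $z=x+iy\in\C$ one has
$$\pi(z,s) = x+sy = \gamma(\sigma'(s))^{-1}(x+uy)\gamma(\sigma'(s)) = \gamma(\sigma'(s))^{-1}\gamma(z\cdot\sigma'(s)),$$
where $z\cdot\sigma'(s)$ denotes the scalar multiplication in $E=(A,L_u)$. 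Extending by $\C$-linearity and using the $\C$-linear maps $T_j:E\to E$ introduced before the statement, for any $w=\sum z_j\otimes q_j\in\C\otimes A$ we obtain
$$\pi(w,s) = \gamma(\sigma'(s))^{-1}\gamma\bigl(\Psi_s(w)\bigr),\qquad \Psi_s(w):=\sum_j z_j\, T_{q_j}(\sigma'(s)).$$
The map $\Psi_s$ is $\C$-linear in $w$ and depends holomorphically on $s$, since each $T_{q_j}$ is a fixed $\C$-linear endomorphism of $E$ and $\sigma'$ is holomorphic.

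Second, let $F:\cU\to\C\otimes A$ be the holomorphic stem function associated to $f$ by Proposition \ref{prp_GP}, so that $\mathscr{F}(z,s)=(F(z),s)$ as in Proposition \ref{prp_tensore}. I would then define
$$\widetilde{F}:\cU\times S\to\mathbb{P}(E\oplus E),\qquad \widetilde{F}(z,s):=\bigl[\,\sigma'(s)\;:\;\Psi_s(F(z))\,\bigr].$$
Holomorphicity of $\widetilde{F}$ is immediate: $z\mapsto F(z)$ is holomorphic, $s\mapsto\sigma'(s)$ is holomorphic (by definition of parametrization and the bundle structure in Proposition \ref{prp_bundle}), and $\Psi_s$ depends $\C$-linearly and holomorphically on both variables. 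The induction property reads
$$\rho_1\circ\widetilde{F}(z,s)=\gamma(\sigma'(s))^{-1}\gamma(\Psi_s(F(z)))=\pi(F(z),s)=\pi\circ\mathscr{F}(z,s)=f\circ\pi(z,s),$$
so that indeed $\widetilde{F}$ induces $f$ through the factorization $f\circ\pi=\rho_1\circ\widetilde{F}$. Across the exceptional set $Y$ on which $\sigma'$ may fail to be defined, the map $\widetilde{F}$ extends as a holomorphic map into projective space by construction of $\sigma$ as the projective closure of the graph of $\sigma'$.

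Third, uniqueness follows by reversing the construction: given any holomorphic $\widetilde{F}$ satisfying $\rho_1\circ\widetilde{F}=f\circ\pi$ and compatible with the symmetry $(z,s)\mapsto(\bar z,-s)$ as in Lemma \ref{lmm_twist}, one recovers the stem function $F$ by inverting the formulas above on the open dense set where $\sigma'(s)\notin D$, and invokes the uniqueness of $F$ in Proposition \ref{prp_GP}. The main obstacle I expect is the careful handling of the zerodivisor locus and of the indeterminacy of $\rho_1$: one must verify that $\Psi_s(F(z))$ never forces $[\sigma'(s):\Psi_s(F(z))]$ into the indeterminacy of $\rho_1$, and that the projective extension across $Y$ is compatible with the symmetry conditions required for uniqueness. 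Both are manageable precisely because $\sigma$ was defined to avoid $D$ and because $\mathbb{P}(E\oplus E)$ is compact, so meromorphic data on a codimension-$\geq 1$ locus extends holomorphically by Hartogs/Remmert-type arguments.
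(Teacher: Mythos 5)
Your construction is essentially the paper's own proof: the paper likewise sets $\widetilde{F}(z,s)=[\gamma(\sigma(s)):\gamma(T_{F(z)}(\sigma(s)))]$ with $T_{F(z)}=\sum_j F_j(z)T_j$ (your $\Psi_s(F(z))$) and verifies $f\circ\pi=\rho_1\circ\widetilde{F}$ via the same conjugation identity. Your extra care about the indeterminacy locus of $\rho_1$ and the uniqueness argument goes slightly beyond what the paper writes down, but the core approach is identical.
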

\begin{proof} By \cite{GP1}, we have a holomorphic function $F:\cU\to\C\otimes A$ inducing $f$; we write
$$F=F_0v_1+F_1v_2+F_2v_3+\ldots F_Nv_N\;.$$
Given $(z,s)$, we set
$$\widetilde{F}(z,s)=[(\gamma(\sigma(s)), \gamma(T_{F(z)}(\sigma(s)))$$
where
$$T_{F(z)}(w)=\sum_{j} F_j(z)T_j(w)\;.$$
We define the map $\rho_1':E\oplus E\to A$ by $\rho_1'(z,w)=\gamma(z)^{-1}\gamma(w)$ and we note that such map passes to the quotient to a partially defined map $\rho_1:\mathbb{P}(E\oplus E)\dashrightarrow A$, defined on the locus where $\gamma(z)$ is invertible, such that $f\circ \pi=\rho_1\circ\widetilde{F}$.\end{proof}

\subsection{An intrinsic twistor transform}\label{intrinsic_twistor}

The formulation of Proposition \ref{prp_twist} depends on the choice of a parametrization $\sigma$ of $S$, that is, on a global nowhere vanishing section of $\mathcal V$. As discussed in Remark following Proposition \ref{prp_section_principal_bundle}, such a section need not exist; already for $A=\H$ there is none on the whole of $S\simeq\P^1$. We now show that the twistor transform can be defined intrinsically as a holomorphic map into the projectivization of $\mathcal V\oplus\mathcal V$, with no choice of global section, recovering the previous formula in every holomorphic trivialization.

Recall from Proposition \ref{prp_bundle} that $\mathcal V\to S$ is a holomorphic vector bundle whose fibre over $s$ is
\[
\mathcal V_s=\{w\in A\ :\ ws=uw\}\subseteq E,
\]
where $E=(A,L_u)$. Left multiplication on $A$ induces, for every $q\in A$, a holomorphic bundle endomorphism
\[
\mathcal T_q:\mathcal V\longrightarrow \mathcal V,\qquad (\mathcal T_q)_s(w)=\gamma^{-1}\big(\gamma(w)q\big),
\]
which is well defined because $\gamma(w)q$ still belongs to $\gamma(\mathcal V_s)$ when $\gamma(w)s=u\gamma(w)$, and which is $\C$-linear on the fibres by the same computation that shows the $T_j$ are $\C$-linear. Let $\mathbb P(\mathcal V\oplus\mathcal V)\to S$ be the projective bundle associated with $\mathcal V\oplus\mathcal V$.

\begin{teorema}\label{teo_intrinsic_twistor}
Let $V=\pi(\cU\times S)$ be an axially symmetric domain and $f:V\to A$ a slice-regular function, induced by the holomorphic stem function $F:\cU\to\C\otimes A$. Then there is a unique holomorphic map
\[
\widetilde F_{\mathcal V}:\cU\times S\longrightarrow \mathbb P(\mathcal V\oplus\mathcal V)
\]
over $S$, given fibrewise by
\[
\widetilde F_{\mathcal V}(z,s)=\big[\,w\ :\ \mathcal T_{F(z)}(w)\,\big],\qquad w\in\mathcal V_s\setminus\{0\},
\]
which is independent of the auxiliary vector $w$ and satisfies $f\circ\pi=\rho\circ\widetilde F_{\mathcal V}$, where $\rho:\mathbb P(\mathcal V\oplus\mathcal V)\dashrightarrow A$ is the partially defined map induced by $(w_1,w_2)\mapsto\gamma(w_1)^{-1}\gamma(w_2)$. In any holomorphic trivialization of $\mathcal V$ over an open set $\Omega\subseteq S$ determined by a nowhere vanishing section $\sigma$, the map $\widetilde F_{\mathcal V}$ coincides with the projective expression of Proposition \ref{prp_twist}.
\end{teorema}
\begin{proof}
The rank-one line $[\,w:\mathcal T_{F(z)}(w)\,]$ does not depend on the choice of $w\in\mathcal V_s\setminus\{0\}$: if $\dim_{\C}\mathcal V_s=1$ this is immediate, and in general the assignment $w\mapsto(w,\mathcal T_{F(z)}(w))$ is a $\C$-linear section of $\mathcal V\oplus\mathcal V$ over the line $\C w$, so its class in $\mathbb P(\mathcal V_s\oplus\mathcal V_s)$ is well defined exactly as in the parametrized case, where the role of $w$ is played by $\gamma(\sigma(s))$. Holomorphy follows because $F$ is holomorphic, the bundle $\mathcal V$ and the endomorphisms $\mathcal T_q$ are holomorphic, and projectivization of a holomorphic vector-bundle morphism is holomorphic on the locus where it is non-zero, which here is all of $\cU\times S$ since $w\neq0$. Uniqueness and the identity $f\circ\pi=\rho\circ\widetilde F_{\mathcal V}$ are local statements and may be checked in a trivialization. Fixing a nowhere vanishing holomorphic section $\sigma$ over $\Omega$ identifies $\mathcal V|_\Omega$ with the trivial line subbundle $\C\,\sigma$, and the choice $w=\sigma(s)$ turns the formula above into
\[
\widetilde F_{\mathcal V}(z,s)=\big[\gamma(\sigma(s)):\gamma(T_{F(z)}(\sigma(s)))\big],
\]
which is precisely the expression of Proposition \ref{prp_twist}, and $\rho$ restricts to $\rho_1$. This proves both the coincidence and, since the two sides agree on the trivializing cover, the global existence and uniqueness of $\widetilde F_{\mathcal V}$.
\end{proof}

\begin{rem}\label{rem_intrinsic_twistor}
The advantage of $\widetilde F_{\mathcal V}$ over Proposition \ref{prp_twist} is that it requires no global parametrization of $S$ and is therefore defined whenever $f$ is, removing the dependence on the existence of an invertible global section. For $A=\H$ the bundle $\mathcal V$ is the tautological line bundle over $S\simeq\P^1$, and $\mathbb P(\mathcal V\oplus\mathcal V)\simeq\P^1\times\P^1$; the map $\widetilde F_{\mathcal V}$ recovers the classical twistor transform of Gentili, Salamon and Stoppato, under which slice-regular functions correspond to holomorphic curves in the Klein quadric and in $\CP^3$ \cite{GSS}.
\end{rem}

\section{Orthogonal complex structures}\label{orto}

In this section, we study the elements of $A$ which induce on $A$, by left multiplication, a complex structure which is orthogonal with respect to a fixed Euclidean structure. Let $\langle\cdot,\cdot\rangle$ be a positive definite inner product on $A$. Since the constructions below are unchanged by rescaling the inner product, we assume from now on that
\[
\langle 1,1\rangle=1.
\]
We emphasize that this inner product is not assumed to be compatible with the algebra structure of $A$. In particular, for a general element $a\in A$, the adjoint endomorphism $L_a^t$ need not be of the form $L_b$ for any $b\in A$.

We define
\[
S_0=\{s\in S\ :\ L_sL_s^t=I\},
\]
i.e. the set of elements of $A$ which induce, by left multiplication, an orthogonal complex structure on $A$.

\begin{rem}
When $A=\H$, there is no difference between $S$ and $S_0$ for the standard inner product that makes the basis $\{1,i,j,k\}$ orthonormal and positively oriented.
\end{rem}

We have the following elementary alternative description of $S_0$.

\begin{lemma}\label{lemma_3}
Given $s\in A$, any two of the following conditions imply the third one:
\begin{enumerate}
\item $s\in S$;
\item $L_sL_s^t=I$;
\item $L_s+L_s^t=0$.
\end{enumerate}
\end{lemma}
\begin{proof}
First, $s\in S$ if and only if $L_s^2=-I$, i.e. if and only if $L_s^{-1}=-L_s$. If (1) and (2) hold, then $L_s^t=L_s^{-1}=-L_s$, so (3) holds. If (1) and (3) hold, then $L_s^t=-L_s=L_s^{-1}$, so (2) holds. Finally, if (2) and (3) hold, then $-L_s=L_s^t=L_s^{-1}$, hence $L_s^2=-I$, so (1) holds.
\end{proof}

\begin{corol}\label{cor_S0_compact}
The set $S_0$ is compact.
\end{corol}
\begin{proof}
The map $a\mapsto L_a$ is a linear injective map from $A$ to $\mathrm{End}(A)$, hence it is proper. By Lemma \ref{lemma_3}, $S_0$ is the inverse image of the compact set of orthogonal antisymmetric endomorphisms of $A$.
\end{proof}

\begin{teorema}\label{teo_S0_complex}
The set $S_0$ is a compact complex subvariety of $S$ with respect to the complex structure $J_S$.
\end{teorema}
\begin{proof}
Let $V=A$ with the fixed inner product, and consider the space $\mathcal E_V$ of complex structures on $V$ from Theorem \ref{teo_cplstr}. Let
\[
\mathcal E_V^{\mathrm{orth}}=\{J\in\mathcal E_V:J^tJ=I\}.
\]
Under the biholomorphism $J\mapsto\mathcal W(J)$ of Theorem \ref{teo_cplstr}, the space $\mathcal E_V^{\mathrm{orth}}$ is identified with the orthogonal Grassmannian of maximal isotropic subspaces of $(\C\otimes V,Q)$, where $Q$ is the complex bilinear extension of $\langle\cdot,\cdot\rangle$. Indeed, if $J$ is orthogonal, then the $(-i)$-eigenspace of $J$ is $Q$-isotropic, and conversely such an isotropic splitting determines an orthogonal complex structure. Hence $\mathcal E_V^{\mathrm{orth}}$ is a compact complex subvariety of $\mathcal E_V$.

The map
\[
L:S\longrightarrow\mathcal E_A,
\qquad s\longmapsto L_s,
\]
is holomorphic: if $h\in T_sS$, then
\[
dL_s(J_Sh)=L_{sh}=L_sL_h,
\]
which is the complex structure of $\mathcal E_A$ applied to $dL_s(h)=L_h$. Since
\[
S_0=L^{-1}(\mathcal E_A^{\mathrm{orth}}),
\]
it follows that $S_0$ is a complex subvariety of $S$. Its compactness was proved in Corollary \ref{cor_S0_compact}.
\end{proof}

\begin{rem}
Equivalently, by Lemma \ref{lemma_3}, $S_0=S\cap A_0$, where
\[
A_0=\{a\in A:L_a+L_a^t=0\}.
\]
In general one cannot conclude that $T_sS_0=T_sS\cap A_0$ at regular points without a transversality or clean-intersection hypothesis. The proof above avoids this point by using the orthogonal Grassmannian.
\end{rem}

\begin{rem}
For every fixed $s\in S$ one can choose an inner product on $A$ for which $L_s$ is orthogonal, for instance by averaging any Euclidean metric over the circle generated by $L_s$. Thus the sets $S_0$ obtained from varying inner products cover $S$ set-theoretically. This observation does not identify, for a fixed $*$-algebra, the set $S_0$ with the set of imaginary units $\mathbb S_A$ of Ghiloni--Perotti.
\end{rem}

\subsection{Examples of \texorpdfstring{$S$}{S} and \texorpdfstring{$S_0$}{S0}} We collect some elementary examples. They show that $S$ and $S_0$ may have rather different behaviour: $S$ is often non-compact, whereas $S_0$ is always compact by Corollary \ref{cor_S0_compact}. They also show that $S_0$ should be regarded as a compact, metric, twistor-type part of $S$, rather than as a replacement for the set of imaginary units of a fixed $*$-algebra. \begin{ex}[The quaternions] Let $A=\mathbb H$, endowed with the standard Euclidean product for which $1,i,j,k$ are orthonormal. Then \[ S=\{q\in\mathbb H:\ q^2=-1\} = \{x_1i+x_2j+x_3k:\ x_1^2+x_2^2+x_3^2=1\}. \] Thus \[ S\simeq S^2\simeq\mathbb P^1. \] For the standard Euclidean structure, left multiplication by every imaginary unit is orthogonal. Hence \[ S_0=S. \] This is the classical twistor sphere of complex structures on $\mathbb H\simeq\mathbb R^4$. \end{ex} \begin{ex}[The algebra $M_2(\mathbb R)$] Let $A=M_2(\mathbb R)$. If \[ s= \begin{pmatrix} a & b\\ c & d \end{pmatrix} \] satisfies $s^2=-I$, then necessarily $\operatorname{tr}(s)=0$, hence $d=-a$, and the equation becomes \[ a^2+bc=-1. \] Therefore \[ S= \left\{ \begin{pmatrix} a & b\\ c & -a \end{pmatrix} :\ a^2+bc=-1 \right\}. \] This is a smooth non-compact real algebraic surface. For the Frobenius inner product on $M_2(\mathbb R)$, the condition $L_s^tL_s=I$ is equivalent to left multiplication by $s$ being an orthogonal endomorphism. In this case one obtains only the two standard orthogonal complex structures \[ S_0= \left\{ \begin{pmatrix} 0 & -1\\ 1 & 0 \end{pmatrix}, \begin{pmatrix} 0 & 1\\ -1 & 0 \end{pmatrix} \right\}. \] Thus $S$ is non-compact and positive-dimensional, whereas $S_0$ is finite. This example is useful to keep in mind when comparing $S_0$ with the set of imaginary units of a $*$-algebra. \end{ex} \begin{ex}[Real matrix algebras] Let $A=M_{2m}(\mathbb R)$. Then $S$ is the space of complex structures on $\mathbb R^{2m}$. It has two connected components, corresponding to the two possible orientations induced by the complex structure, and each component is a homogeneous space of the form \[ GL^+(2m,\mathbb R)/GL(m,\mathbb C) \] up to the choice of orientation. For the Frobenius inner product, $S_0$ is the space of orthogonal complex structures on $\mathbb R^{2m}$: \[ S_0\simeq O(2m)/U(m). \] Thus $S_0$ is a compact complex homogeneous space. For $m=2$, each connected component is isomorphic to $\mathbb P^1$. \end{ex} \begin{ex}[Complex matrix algebras as real algebras] Let $A=M_n(\mathbb C)$, regarded as a real associative algebra. The solutions of $s^2=-1$ are diagonalizable, with eigenvalues contained in $\{i,-i\}$. Hence $S$ decomposes according to the dimension $k$ of the $i$-eigenspace: \[ S=\coprod_{k=0}^n S_k. \] The component $S_k$ parametrizes decompositions \[ \mathbb C^n=E_i\oplus E_{-i}, \qquad \dim_\mathbb C E_i=k. \] For the standard Hermitian, equivalently Frobenius, metric, the orthogonal part is \[ S_{0,k}\simeq U(n)/(U(k)\times U(n-k)) \simeq \operatorname{Gr}(k,n). \] Thus the compact pieces of $S_0$ are ordinary complex Grassmannians. \end{ex} \begin{ex}[Quaternionic matrix algebras] Let $A=M_m(\mathbb H)$. The space $S$ contains the homogeneous space \[ GL_m(\mathbb H)/GL_m(\mathbb C), \] corresponding to choices of a complex structure on the right $\mathbb H$-module $\mathbb H^m$. For the standard Euclidean structure, the orthogonal part is \[ S_0\simeq Sp(m)/U(m). \] For $m=1$ this gives back the quaternionic sphere $S_0=S\simeq\mathbb P^1$; for $m>1$ it gives compact homogeneous complex manifolds of higher dimension. \end{ex} \begin{ex}[Products] If $A=A_1\times A_2$, then \[ S(A)=S(A_1)\times S(A_2). \] For a product Euclidean structure one also has \[ S_0(A)=S_0(A_1)\times S_0(A_2). \] This gives many examples of compact complex varieties $S_0$, for instance products of projective lines, Grassmannians, and spaces of orthogonal complex structures. \end{ex} \begin{ex}[Dual quaternions] Let \[ A=\mathbb H[\varepsilon]/(\varepsilon^2). \] Writing an element of $A$ as $u+\varepsilon v$, the equation \[ (u+\varepsilon v)^2=-1 \] is equivalent to \[ u^2=-1, \qquad uv+vu=0. \] Thus $u\in S(\mathbb H)\simeq S^2$, while $v$ belongs to the tangent space of the sphere of imaginary units at $u$. Therefore \[ S(A)\simeq TS^2\simeq T\mathbb P^1. \] For the product Euclidean metric on $\mathbb H\oplus\varepsilon\mathbb H$, the orthogonality condition forces the nilpotent component to vanish, hence \[ S_0(A)\simeq S^2\simeq\mathbb P^1. \] This example shows that nilpotent directions may enlarge $S$ without enlarging its compact orthogonal part. \end{ex}

\subsection{Classification of \texorpdfstring{$S_0$}{S0} for semisimple algebras}\label{wedderburn}

The examples above are not isolated computations: they are the building blocks of a complete description of $S_0$ for a semisimple real associative algebra, once the Euclidean structure is chosen compatibly with the Wedderburn decomposition. Recall that, by the Wedderburn--Artin theorem, a finite-dimensional semisimple real associative algebra $A$ decomposes as a finite product of simple factors
\[
A\simeq \prod_{r=1}^{k} A_r,\qquad A_r\simeq M_{n_r}(D_r),
\]
where each $D_r$ is one of $\R$, $\C$ or $\H$. We say that a Euclidean structure on $A$ is \emph{adapted} to this decomposition if it is an orthogonal product of Euclidean structures on the factors $A_r$, each of which is the Frobenius (Hilbert--Schmidt) inner product determined by the standard inner product of $D_r$.

\begin{teorema}\label{teo_wedderburn}
Let $A$ be a finite-dimensional semisimple real associative algebra with Wedderburn decomposition $A\simeq\prod_{r=1}^k A_r$, $A_r\simeq M_{n_r}(D_r)$, and let the Euclidean structure on $A$ be adapted to it. Then
\[
S_0(A)\simeq \prod_{r=1}^k S_0(A_r),
\]
and each factor is one of the following compact Hermitian symmetric spaces:
\begin{enumerate}
\item if $A_r\simeq M_{2m_r}(\R)$, then $S_0(A_r)\simeq O(2m_r)/U(m_r)$, the space of orthogonal complex structures on $\R^{2m_r}$ (and $S_0(A_r)=\varnothing$ if the matrix size is odd);
\item if $A_r\simeq M_{n_r}(\C)$, then $S_0(A_r)\simeq\coprod_{j=0}^{n_r}\operatorname{Gr}(j,n_r)$, a disjoint union of complex Grassmannians $U(n_r)/(U(j)\times U(n_r-j))$;
\item if $A_r\simeq M_{m_r}(\H)$, then $S_0(A_r)\simeq Sp(m_r)/U(m_r)$, the space of orthogonal complex structures compatible with the quaternionic structure on $\H^{m_r}$.
\end{enumerate}
In particular $S_0(A)$ is a finite disjoint union of products of compact Hermitian symmetric spaces; it reduces to a finite set precisely when every simple factor with non-empty $S_0$ has the form $M_1(\C)=\C$ or $M_2(\R)$, that is, in the commutative or two-dimensional cases.
\end{teorema}
\begin{proof}
For an adapted Euclidean structure the inner product is an orthogonal direct sum over the factors, and left multiplication $L_s$ respects the product decomposition: writing $s=(s_1,\dots,s_k)$ with $s_r\in A_r$, one has $L_s=\bigoplus_r L_{s_r}$ and $L_s^t=\bigoplus_r L_{s_r}^t$, the adjoint being taken factor by factor because the factors are mutually orthogonal. Hence $L_s^tL_s=I$ if and only if $L_{s_r}^tL_{s_r}=I_{A_r}$ for every $r$, and likewise $s^2=-1$ if and only if $s_r^2=-1$ in each $A_r$. Therefore $s\in S_0(A)$ if and only if $s_r\in S_0(A_r)$ for all $r$, which is the asserted product decomposition; this is the abstract form of the Products example.

It remains to identify $S_0(A_r)$ for each simple factor. For the Frobenius inner product, Lemma \ref{lemma_3} shows that $S_0(A_r)$ consists of the $s_r$ with $s_r^2=-1$ inducing an orthogonal complex structure on the underlying Euclidean space of $A_r$. The three cases are exactly the Examples computed above: $M_{2m}(\R)$ gives $O(2m)/U(m)$ (with no solutions in odd size, since a real vector space of odd dimension admits no complex structure); $M_n(\C)$ gives, according to the dimension $j$ of the $i$-eigenspace, the Grassmannian $U(n)/(U(j)\times U(n-j))$, whence the disjoint union over $j$; and $M_m(\H)$ gives $Sp(m)/U(m)$. Each of these is a compact Hermitian symmetric space, and a product of such spaces is again compact Hermitian symmetric. The final statement follows by inspecting when a factor is zero-dimensional: $O(2m)/U(m)$ and $Sp(m)/U(m)$ are positive-dimensional for $m\geq2$ and reduce to a point or two points for the smallest sizes, while $\operatorname{Gr}(j,n)$ is a single point only for $j\in\{0,n\}$; the only simple factors for which $S_0$ is finite and non-empty are $\C$ (two points, $\pm i$) and $M_2(\R)$ (two points), matching the $M_2(\R)$ Example.
\end{proof}

\begin{rem}\label{rem_wedderburn}
Theorem \ref{teo_wedderburn} provides an explicit catalogue of the compact parameter spaces $K=S_0$ to which Theorem \ref{teo_zero_var} and the incidence machinery of Theorem \ref{teo_incidence} apply, with computable equations and dimensions. The semisimplicity hypothesis is essential: the dual quaternions example, where $A$ has a non-trivial radical, shows that nilpotent directions may enlarge $S$ while leaving $S_0$ a single projective line, so the radical does not contribute to the orthogonal locus.
\end{rem}

\subsection{The Euclidean cone}
Another description of the set $S_0$ is the following.

\begin{propos}\label{prp_S0_metric}
We have
\[
S_0=\{s\in A\ :\ \langle sa,sa\rangle=\langle a,a\rangle,\ \langle sa,a\rangle=0\ \forall a\in A\}.
\]
\end{propos}
\begin{proof}
The first condition says exactly that $L_s$ is orthogonal, while the second says that $L_s$ is skew-adjoint. Together with Lemma \ref{lemma_3}, these are equivalent to $s\in S_0$.
\end{proof}

\medskip

We consider the set
\[
Q_0=\pi(\C\times S_0)\subseteq A,
\]
where $\pi:\C\times S\to A$ is as defined above. This is the Euclidean quadratic cone associated with the chosen inner product.

\begin{propos}\label{prp_carat1}
The set $Q_0$ is characterized by
\[
Q_0=\{x\in A:\ L_{x+r}^tL_{x+r}=\|x+r\|^2I\ \textrm{ for every }r\in\R\}.
\]
Equivalently, it is the largest subset of $A$ which is invariant under real translations and whose every element satisfies $L_x^tL_x=\|x\|^2I$.
\end{propos}
\begin{proof}
If $x=x_0+x_1s$, with $s\in S_0$, then for every $r\in\R$ we have
\[
L_{x+r}=(x_0+r)I+x_1L_s,
\]
and therefore
\[
L_{x+r}^tL_{x+r}=((x_0+r)^2+x_1^2)I.
\]
Since $L_s$ is orthogonal and skew-adjoint, $\|s\|=\|1\|=1$ and $\langle1,s\rangle=0$, so
\[
\|x+r\|^2=(x_0+r)^2+x_1^2.
\]
Thus $x\in Q_0$ satisfies the stated condition.

Conversely, suppose that $L_{x+r}^tL_{x+r}=\|x+r\|^2I$ for every $r\in\R$. Comparing the coefficients in $r$ gives
\[
L_x^tL_x=\|x\|^2I,
\qquad
L_x+L_x^t=2\langle x,1\rangle I.
\]
Set $\beta=\langle x,1\rangle$ and $y=x-\beta$. Then $L_y+L_y^t=0$ and
\[
L_y^tL_y=(\|x\|^2-\beta^2)I.
\]
If $\|x\|^2=\beta^2$, then $y=0$ and $x=\beta\in\R\subset Q_0$. Otherwise, let $\gamma=\sqrt{\|x\|^2-\beta^2}$ and set $s=\gamma^{-1}y$. Then $L_s+L_s^t=0$ and $L_s^tL_s=I$, hence $s\in S_0$ by Lemma \ref{lemma_3}. Therefore $x=\beta+\gamma s\in Q_0$.
\end{proof}

The following alternative description is immediate.

\begin{corol}\label{cor_Q0_linear}
We have that $x\in Q_0$ if and only if there exist real numbers $\alpha\ge0$ and $\beta$, with $\alpha^2\ge\beta^2$, such that
\[
L_x+L_x^t=2\beta I,
\qquad
L_x^tL_x=\alpha^2 I.
\]
\end{corol}
\begin{proof}
If $x=x_0+x_1s$ with $s\in S_0$, then the two equations hold with $\beta=x_0$ and $\alpha^2=x_0^2+x_1^2$.

Conversely, if the two equations hold, then $y=x-\beta$ satisfies
\[
L_y+L_y^t=0,
\qquad
L_y^tL_y=(\alpha^2-\beta^2)I.
\]
If $\alpha^2=\beta^2$, then $y=0$ and $x\in\R\subset Q_0$. If $\alpha^2>\beta^2$, the element $s=(\alpha^2-\beta^2)^{-1/2}y$ belongs to $S_0$ by Lemma \ref{lemma_3}, and $x=\beta+\sqrt{\alpha^2-\beta^2}\,s$ belongs to $Q_0$.
\end{proof}

\subsection{The Euclidean cone and the quadratic cone} The set \[ Q_0=\pi(\mathbb C\times S_0) \] depends on the Euclidean structure fixed on $A$. It should therefore be distinguished from the quadratic cone $Q_A$ associated by Ghiloni--Perotti to a real alternative $*$-algebra. The two cones agree only under additional compatibility assumptions. Let $A$ be endowed with a $*$-involution \[ x\longmapsto x^c. \] The corresponding quadratic cone is defined in terms of the trace and the norm \[ t(x)=x+x^c,\qquad n(x)=xx^c. \] On the other hand, the present construction starts from a Euclidean inner product on $A$ and defines $S_0$ by the orthogonality of $L_s$. There is no reason, in a general real associative algebra, for the adjoint endomorphism $L_x^t$ to be again a left multiplication. In particular, the Euclidean construction does not produce a canonical $*$-involution. A natural compatibility condition is the following: \[ L(A)^t\subseteq L(A). \] If this holds, then for every $x\in A$ there exists a unique element $x^\dagger\in A$ such that \[ L_x^t=L_{x^\dagger}. \] Since \[ L_{xy}^t=L_y^tL_x^t=L_{y^\dagger x^\dagger}, \] the map $x\mapsto x^\dagger$ is an anti-automorphism. If moreover \[ (x^\dagger)^\dagger=x, \] then it is a $*$-involution induced by the Euclidean structure. \begin{propos}\label{prp_metric_involution} Assume that the Euclidean structure on $A$ satisfies \[ L(A)^t\subseteq L(A) \] and let $x\mapsto x^\dagger$ be defined by \[ L_x^t=L_{x^\dagger}. \] Then \[ S_0=\{s\in A:\ s^2=-1,\ s^\dagger=-s\}. \] \end{propos} \begin{proof} If $s\in S_0$, then $s^2=-1$ and $L_s^t=-L_s$ by Lemma \ref{lemma_3}. Since $L_s^t=L_{s^\dagger}$ and the map $a\mapsto L_a$ is injective, we get $s^\dagger=-s$. Conversely, if $s^2=-1$ and $s^\dagger=-s$, then \[ L_s^t=L_{s^\dagger}=-L_s. \] Together with $L_s^2=-I$, this implies \[ L_s^tL_s=(-L_s)L_s=I, \] hence $s\in S_0$. \end{proof} \begin{corol}\label{cor_Q0_QA_compatible} Assume that $A$ is a real associative $*$-algebra and that the Euclidean structure is compatible with the involution in the sense that \[ L_x^t=L_{x^c} \] for all $x\in A$. If \[ S_0=\mathbb S_A, \] where $\mathbb S_A$ is the set of imaginary units of the $*$-algebra, then \[ Q_0=Q_A. \] \end{corol} \begin{proof} Under the compatibility assumption, Proposition \ref{prp_metric_involution} gives \[ S_0=\{s\in A:\ s^2=-1,\ s^c=-s\}. \] If this set coincides with $\mathbb S_A$, then the two cones are obtained by taking the same union of complex slices: \[ Q_0=\bigcup_{s\in S_0}(\mathbb R+\mathbb R s) = \bigcup_{J\in\mathbb S_A}(\mathbb R+\mathbb R J) =Q_A. \] \end{proof} \begin{rem} The equality $S_0=\mathbb S_A$ is not automatic. For instance, in algebras with non-compact sets of imaginary units, the set $\mathbb S_A$ cannot coincide with $S_0$, since $S_0$ is compact by Corollary \ref{cor_S0_compact}. Thus $Q_0$ and $Q_A$ should be regarded as distinct objects in general. \end{rem} The distinction between the two cones reflects two different geometric roles. The quadratic cone $Q_A$ is adapted to the algebraic theory of slice functions on a $*$-algebra. It carries the trace, the norm, the conjugation, and the characteristic spheres which enter the algebraic study of zero sets and the fundamental theorem of algebra for slice-regular polynomials. The Euclidean cone $Q_0$, instead, is adapted to the complex-geometric construction of the present paper. Its parameter space $S_0$ is compact; hence incidence varieties over $S_0$ have proper projections, and Remmert's theorem applies. Thus the role of $Q_0$ is not to replace $Q_A$, but to provide a compact family of complex slices on which the holomorphic and twistor-type machinery is available. \begin{rem} For $x=\alpha+\beta s$ with $s\in S_0$, one may define, relative to the chosen Euclidean cone, the quantities \[ t_0(x)=2\alpha, \qquad n_0(x)=\alpha^2+\beta^2. \] These agree with the usual trace and norm of a $*$-algebra only when the Euclidean structure and the involution are compatible as above. Without such compatibility, $t_0$ and $n_0$ are functions on $Q_0$, not intrinsic trace and norm functions on the whole algebra $A$. \end{rem}

\subsection{Equations for the zero variety}

For a non-compact parameter space, the set-theoretic zero variety defined in Section \ref{zero_var} need not be a complex subvariety. In the compact case, however, Theorem \ref{teo_zero_var} applies. We now focus on the compact subvariety $S_0$ and define
$$Z_0=\{w\in\C\otimes A\ :\ \pi(w,s)=0\textrm{ for some }s\in S_0\}\;,$$
then we can obtain some results. Let $Q:\C\otimes A\times\C\otimes A\to\C$ be the $\C$-bilinear extension of the inner product on $A$ and $\Phi:\C\otimes A\to \C$ be the quadratic form $\Phi(w)=Q(w,w)$.

\begin{propos}\label{prp_cont1}If $w\in \C\otimes A$ belongs to $Z_0$ then $\Phi(ww')=0$ for all $w'\in \C\otimes A$.\end{propos}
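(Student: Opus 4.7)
The plan is to factor the statement into two easier pieces: first, that membership in $Z_0$ forces $\Phi(w)=0$, and second, that $Z_0$ is closed under right multiplication in $\C\otimes A$, so that $ww'\in Z_0$ as well.

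For the first step, let $w\in Z_0$ and pick $s\in S_0$ with $\pi(w,s)=0$. Writing $w=1\otimes a+i\otimes b$, this amounts to $a=-sb$. Expanding $\C$-bilinearly,
\[
\Phi(w)=Q(1\otimes a,1\otimes a)+2Q(1\otimes a,i\otimes b)+Q(i\otimes y,i\otimes y)=\|a\|^2-\|b\|^2+2i\langle a,b\rangle.
\]
Since $s\in S_0$, by Lemma~\ref{lemma_3} we have $L_s^tL_s=I$ and $L_s^t=-L_s$. The orthogonality of $L_s$ gives $\|a\|^2=\|sb\|^2=\|b\|^2$, and antisymmetry gives $\langle a,b\rangle=-\langle sb,b\rangle=\langle b,sb\rangle=-\langle sb,b\rangle$, whence $\langle a,b\rangle=0$. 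Therefore $\Phi(w)=0$.

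For the second step, I would adapt verbatim the argument from the preceding proposition showing that $Z$ absorbs products on the right. Writing $w'=1\otimes c+i\otimes d$ and expanding,
\[
ww'=1\otimes(ac-bd)+i\otimes(ad+bc),
\]
and substituting $a=-sb$, one checks directly that $(ac-bd)+s(ad+bc)=-sbc-bd+s(-sbd+bc)=-sbc-bd-bd\cdot(-1)-sbc\cdot 0$; more precisely, using $s^2=-1$, one gets $-sbc-bd+s^2bd+s\cdot 0=-sbc-bd-bd\cdot(-1)$ — let me simply record that $\pi(ww',s)=(ac-bd)+s(ad+bc)=0$, using $a=-sb$ and $s^2=-1$. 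Hence the \emph{same} square root $s\in S_0$ witnesses $ww'\in Z_0$.

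Combining the two steps, $ww'\in Z_0$ by Step~2, and therefore $\Phi(ww')=0$ by Step~1. No step looks genuinely hard: the only mildly delicate point is bookkeeping in the second step to verify that the element $s\in S_0$ witnessing $w\in Z_0$ also witnesses $ww'\in Z_0$, but this is exactly the observation already used in the preceding proposition for $Z$ and depends only on $s^2=-1$, not on any orthogonality hypothesis.
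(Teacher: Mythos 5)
Your proof is correct, but it follows a genuinely different route from the paper's. The paper argues directly at the operator level: writing $w=1\otimes v+i\otimes L_sv$, it computes the adjoint of left multiplication by $w$ on $\C\otimes A$ and shows that $L_w^tL_w=0$ as an endomorphism, using $L_s^t=-L_s$ and $L_s^2=-I$; the claim then follows for all $w'$ simultaneously from $\Phi(ww')=Q(L_ww',L_ww')=Q(w',L_w^tL_ww')=0$. You instead decompose the statement into two independent pieces: (a) $\Phi$ vanishes on $Z_0$ itself, which uses the orthogonality and antisymmetry of $L_s$ for $s\in S_0$ (equivalently, that each $\mathcal{W}(L_s)$ with $s\in S_0$ is isotropic for $Q$); and (b) $Z_0$ absorbs right multiplication with the \emph{same} witness $s$, which is literally the computation from the earlier proposition on $Z$ and uses only $s^2=-1$. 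Your version is more elementary (no adjoints of tensor-product operators) and has the merit of isolating exactly where the hypothesis $s\in S_0$ enters, namely only in step (a). What the paper's computation buys in exchange is that the identity $L_w^tL_w=0$ is precisely the system of quadratic equations defining $\mathcal{Z}$ in \eqref{eq_luogo}, so its proof simultaneously establishes the inclusion $Z_0\subseteq\mathcal{Z}$ needed for Theorem \ref{teo_luogo}. Two small blemishes to fix in your write-up: the display for $\Phi(w)$ has a stray $y$ where $b$ is meant, and the intermediate expansion of $(ac-bd)+s(ad+bc)$ is garbled before you state the correct conclusion; the clean computation is $-sbc-bd+s(-sbd+bc)=-sbc-bd+bd+sbc=0$.
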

\begin{proof}If $w\in Z_0$, then there exists $s\in S_0$ such that $\pi(w,s)=0$. As in the proof of Theorem \ref{teo_zero_var}, we note that $\pi(w,s)=0$ if and only if $w\in \mathcal{W}(L_s)$. Hence
$$w=1\otimes v + i\otimes L_s v$$
for some $v\in A$. Therefore, writing $I_1$ for the identity map in $\mathrm{End}(\C)$ and $I$ for the identity map in $\mathrm{End}(A)$,
$$ww'=(I_1\otimes L_v)w'+i(I_1\otimes L_sL_v)w'\;.$$
We note that
$$((I_1\otimes L_v)+i(I_1\otimes L_sL_v))^t=(I_1\otimes L_v^t)-i(I_1\otimes L_v^tL_s)$$
so
$$((I_1\otimes L_v)+i(I_1\otimes L_sL_v))^t((I_1\otimes L_v)+i(I_1\otimes L_sL_v))=((I_1\otimes L_v^t)-i(I_1\otimes L_v^tL_s))((I_1\otimes L_v)+i(I_1\otimes L_sL_v))=$$
$$=(I_1\otimes L_v)^t(I_1\otimes I +I_1\otimes L_s^2)(I_1\otimes L_v)=0$$
Therefore
$$\Phi(ww')=Q((I_1\otimes L_v)w'+i(I_1\otimes L_sL_v)w',(I_1\otimes L_v)w'+i(I_1\otimes L_sL_v)w')=Q(w',0)=0\;,$$
as we have $Q(T\cdot,\cdot)=Q(\cdot,T^t\cdot)$ for every endomorphism $T$ of $\C\otimes A$.\end{proof}

We note that the set
\begin{equation}\label{eq_luogo}
\mathcal{Z}=\{w\in \C\otimes A\ :\ \Phi(ww')=0\textrm{ for every }w'\in \C\otimes A\}\end{equation}
is given by a finite number of quadratic equations in $\C\otimes A$, namely, by the $N(N+1)/2$ equations $L_w^tL_w=0$, therefore it is a complex subvariety of $\C\otimes A$.

\begin{teorema}\label{teo_luogo}Suppose that the space $\mathcal{Z}$, described by \eqref{eq_luogo}, is irreducible as a complex subvariety of $\C\otimes A$, then it coincides with $Z_0$.
\end{teorema}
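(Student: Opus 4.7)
The plan is to prove the reverse inclusion in two moves: first show that $Z_0$ is already closed analytic, and then exhibit a point $w_0\in Z_0$ admitting a Euclidean neighborhood in $\C\otimes A$ on which $\mathcal{Z}$ and $Z_0$ coincide. The first point comes for free from the argument used in Theorem \ref{teo_zero_var}: $Z_0$ is the image of the incidence variety $\{(w,s)\in\C\otimes A\times S_0:\pi(w,s)=0\}$ under the first projection, which is proper because $S_0$ is compact, so Remmert's theorem presents $Z_0$ as a closed complex subvariety of $\C\otimes A$. Once the second point is established, irreducibility will conclude the argument: $\mathcal{Z}$ being irreducible and pure-dimensional, a closed analytic subset $Z_0\subseteq\mathcal{Z}$ that meets $\mathcal{Z}$ in full dimension at a single point must be all of $\mathcal{Z}$.

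For the candidate point I would take $w_0=1\otimes 1+i\otimes s_0$ with $s_0\in S_0$ arbitrary; this $w_0$ lies in $\mathcal{W}(L_{s_0})\subseteq Z_0\subseteq\mathcal{Z}$ by Proposition \ref{prp_cont1}, and for every $w=1\otimes a+i\otimes b\in\C\otimes A$ sufficiently close to $w_0$ the element $a$ is close to $1$ in $A$ and therefore invertible, so that $L_a^{-1}=L_{a^{-1}}$ exists by associativity of $A$.

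The core computation is to show that every $w\in\mathcal{Z}$ whose real part $a$ is invertible lies in $Z_0$. Writing $L_w=L_a+iL_b$ and $L_w^t=L_a^t+iL_b^t$ (transpose taken with respect to the $\C$-bilinear extension $Q$ of the inner product), the equation $L_w^tL_w=0$ splits into $L_a^tL_a=L_b^tL_b$ and $L_a^tL_b+L_b^tL_a=0$. Setting $T:=L_bL_a^{-1}=L_{ba^{-1}}$, a two-line manipulation of these identities gives $T+T^t=L_a^{-t}(L_a^tL_b+L_b^tL_a)L_a^{-1}=0$ and $T^2=-T^tT=-L_a^{-t}L_b^tL_bL_a^{-1}=-I$. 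By injectivity of $x\mapsto L_x$ on $A$ this forces $s:=ba^{-1}$ to satisfy $s^2=-1$ and $L_s+L_s^t=0$, i.e.\ $s\in S_0$ by Lemma \ref{lemma_3}; since $b=sa$, we conclude $w=1\otimes a+i\otimes sa\in\mathcal{W}(L_s)\subseteq Z_0$, as desired.

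The main obstacle, and the reason I would argue locally rather than appeal to a dense Zariski-open subset, is that invertibility of $a$ is a real-analytic condition on $w\in\C\otimes A$, not a complex-algebraic one, so the locus of $w\in\mathcal{Z}$ where the above construction succeeds is only Euclidean-open, not Zariski-open. Irreducibility of $\mathcal{Z}$ is precisely what turns the local equality $\mathcal{Z}=Z_0$ near $w_0$ into the global one, and it seems essential in this form of the argument: without it one could imagine a further component of $\mathcal{Z}$ sitting entirely in the complex-analytic locus where the real part is a zerodivisor, invisible to the construction. Whether a limiting argument in the spirit of the left-absorption result at the end of Section \ref{zero_var} could remove the irreducibility hypothesis altogether is a question I would leave to a separate investigation.
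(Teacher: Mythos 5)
Your proof is correct and follows essentially the same route as the paper's: on the locus where the real part $a$ is invertible you produce $s=ba^{-1}\in S_0$ from the two identities encoded in $L_w^tL_w=0$, and then invoke irreducibility of $\mathcal{Z}$ to promote the resulting local inclusion into $Z_0$ to a global equality. The only differences are details the paper leaves implicit, namely the Remmert argument showing that $Z_0$ is a closed complex subvariety and the explicit verification that $T+T^t=0$ and $T^2=-I$.
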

\begin{proof}By the previous Proposition, we already know that
$$Z_0\subseteq\{w\in \C\otimes A\ :\ \Phi(ww')=0\textrm{ for every }w'\in \C\otimes A\}=\{w\in\C\otimes A\ :\ L_w^tL_w=0\}\;.$$
On the other hand, if we write $w\in \C\otimes A$  as $w=1\otimes a+i\otimes b$, then
$$L_w^tL_w=(I_1\otimes (L_a^tL_a-L_b^tL_b)) + i(I_1\otimes(L_b^tL_a+L_a^tL_b))\;;$$
so, $L_w^tL_w=0$ if and only if
$$L_a^tL_a=L_b^tL_b\quad\textrm{ and }\quad L_a^tL_b+L_b^tL_a=0\;.$$

If $L_w^tL_w=0$ and $a$ is invertible, then also $b$ is invertible, and vice versa; in such case, we set $u=ba^{-1}$ and it is easy to check that
\begin{enumerate}
\item $L_u^tL_u=I$
\item $L_u^t+L_u=0$
\item $L_b=L_uL_a$.
\end{enumerate}
Therefore, $w\in Z_0$. Now, being invertible is an open condition, so there is an open subset of the space described by \eqref{eq_luogo} which is contained in $Z_0$; by irreducibility, the two complex subvarieties coincide.\end{proof}

\subsection{Explicit equations of \texorpdfstring{$Z_0$}{Z0} in the simple cases}\label{explicit_Z0}

Theorem \ref{teo_luogo} reduces the determination of $Z_0$ to the algebraic variety $\mathcal Z=\{w\in\C\otimes A:L_w^tL_w=0\}$ together with an irreducibility check. For the classical simple algebras, endowed with the Frobenius inner product, both can be carried out explicitly, turning Theorem \ref{teo_luogo} from an abstract criterion into a list of concrete descriptions. Throughout, we use the splitting $w=1\otimes a+i\otimes b$ and the equations
\begin{equation}\label{eq_Z0_split}
L_a^tL_a=L_b^tL_b,\qquad L_a^tL_b+L_b^tL_a=0,
\end{equation}
obtained in the proof of Theorem \ref{teo_luogo}.

\begin{ex}[The quaternions]\label{ex_Z0_H}
Let $A=\H$ with the standard Euclidean structure, so that $L_x^t=L_{\bar x}$, where $x\mapsto\bar x$ is quaternionic conjugation. Then $L_w^tL_w=L_{\bar a a+\bar b b}+\,i\,L_{\bar a b+\bar b a}$ vanishes if and only if
\[
\bar a a=\bar b b,\qquad \bar a b+\bar b a=0,
\]
that is, $|a|^2=|b|^2$ and $a\perp b$ in the real sense $\operatorname{Re}(\bar a b)=0$. Setting $w=1\otimes a+i\otimes b$ and $\Phi(w)=Q(w,w)=|a|^2-|b|^2+2i\,\langle a,b\rangle$, the variety $\mathcal Z$ is the complex quadric cone
\[
\mathcal Z=\{w\in\C\otimes\H:\ \Phi(w)=0\},
\]
a single irreducible quadric in $\C^4$. By Theorem \ref{teo_luogo}, $Z_0=\mathcal Z$ is the isotropic cone of the complexified Euclidean form, recovering the classical fact that in the quaternionic case the zero variety is a hypersurface.
\end{ex}

\begin{ex}[Real matrix algebras]\label{ex_Z0_MR}
Let $A=M_{2m}(\R)$ with the Frobenius inner product, for which $L_X^t=L_{X^t}$. Then \eqref{eq_Z0_split} reads
\[
A^tA=B^tB,\qquad A^tB+B^tA=0,
\]
for $w=1\otimes A+i\otimes B$. Equivalently, the complex matrix $C=A+iB\in M_{2m}(\C)$ satisfies $C^tC=0$, where $C^t$ is the plain transpose. Thus
\[
\mathcal Z=\{C\in M_{2m}(\C):\ C^tC=0\}
\]
is the variety of matrices whose columns span a totally isotropic subspace for the standard symmetric bilinear form on $\C^{2m}$. Its irreducible components are indexed by the rank $r$ of $C$, with $0\le r\le m$, the top component $r=m$ being the closure of the matrices whose column space is a maximal isotropic ($m$-dimensional) subspace. This is a determinantal-type variety fibred over the orthogonal Grassmannian $OG(m,2m)$; on the locus of invertible $A$, equation \eqref{eq_Z0_split} gives $U=BA^{-1}$ with $U^tU=I$ and $U^t=-U$, i.e. $U\in S_0\simeq O(2m)/U(m)$, so $Z_0$ is the irreducible component containing this open set.
\end{ex}

\begin{ex}[Complex matrix algebras as real algebras]\label{ex_Z0_MC}
Let $A=M_n(\C)$ regarded as a real algebra, with the Frobenius Hermitian structure, for which $L_X^t$ corresponds to $L_{X^*}$, $X^*$ the conjugate transpose. Writing $w=1\otimes A+i\otimes B$, equation \eqref{eq_Z0_split} becomes
\[
A^*A=B^*B,\qquad A^*B+B^*A=0.
\]
On the locus where $A$ is invertible, $U=BA^{-1}$ satisfies $U^*U=I$ and $U^*=-U$, hence $U$ is a skew-Hermitian unitary, i.e. $U\in S_0$. The component $S_{0,k}\simeq\operatorname{Gr}(k,n)$ corresponds to the $k$-dimensional $i$-eigenspace of $U$, so $\mathcal Z$ decomposes accordingly and each piece $Z_{0,k}$ is the cone over the tautological bundle of $\operatorname{Gr}(k,n)$. Thus $Z_0=\coprod_k Z_{0,k}$ is a finite union of determinantal cones, one for each Grassmannian component of $S_0$.
\end{ex}

\begin{ex}[Quaternionic matrix algebras]\label{ex_Z0_MH}
Let $A=M_m(\H)$ with the standard Euclidean structure, for which $L_X^t=L_{X^*}$, $X^*=\bar X^t$ the quaternionic conjugate transpose. Then, for $w=1\otimes A+i\otimes B$, \eqref{eq_Z0_split} reads
\[
A^*A=B^*B,\qquad A^*B+B^*A=0,
\]
and on the invertible locus $U=BA^{-1}$ is a quaternionic unitary with $U^*=-U$ and $U^2=-I$, that is $U\in S_0\simeq Sp(m)/U(m)$. The corresponding component of $\mathcal Z$ is the cone over the tautological bundle of $Sp(m)/U(m)$, and Theorem \ref{teo_luogo} identifies $Z_0$ with this irreducible determinantal cone.
\end{ex}

\begin{rem}\label{rem_Z0_explicit}
In each case $Z_0$ is realized as an isotropic or determinantal cone fibred over the corresponding compact Hermitian symmetric space of Theorem \ref{teo_wedderburn}: the quadric cone for $\H$, the totally isotropic matrix variety for $M_{2m}(\R)$, and the determinantal cones over Grassmannians and over $Sp(m)/U(m)$ in the remaining cases. This makes the irreducibility hypothesis of Theorem \ref{teo_luogo} verifiable component by component and yields explicit defining equations \eqref{eq_Z0_split} for the zero variety.
\end{rem}

%

\section{Application: generalized slice-regular functions}\label{application}

As an application of the twistor-geometric machinery of the previous sections, we now drop the requirement that the holomorphic lift of a slice-regular function be of the special form $\mathscr F(z,s)=(F(z),s)$, and study the resulting larger class of functions.

Consider the following definition.

\begin{defin}Let $A$ be a real associative algebra, $\cU\subseteq \C$ an open domain and $V=\pi(\cU\times S)$; a function $f:V\to A$ is called a \emph{generalized slice-regular function} if there exists a holomorphic map $\mathfrak{F}:\cU\times S\to\C\otimes A\times S$ such that $f\circ\pi=\pi\circ \mathfrak{F}$ and $\mathfrak{F}\circ\sigma=\tau\circ\mathfrak{F}$.\end{defin}

\begin{lemma}\label{lmm_generalized_stem}Let $A$ be an associative algebra and suppose that $S$ is compact and connected. If $f:V\to A$ is a generalized slice-regular function, there exist a holomorphic function $F:\cU\to\C\otimes A$ and a holomorphic map $\Phi:\cU\times S\to S$ such that
$$f(\pi(z,s))=\pi(F(z), \Phi(z,s))\;.$$
\end{lemma}
\begin{proof}We write $\mathfrak{F}=(F,\Phi)$ with $F:\cU\times S \to\C\otimes A$ and $\Phi:\cU\times S\to S$ holomorphic. For each fixed $z\in\cU$, the map $s\mapsto F(z,s)$ is a holomorphic map from the compact connected complex manifold $S$ to $\C\otimes A$. Since $\C\otimes A$ is a finite-dimensional complex vector space, hence a Stein manifold, every holomorphic map from a compact connected complex manifold to it is constant (by the maximum principle applied to the coordinate functions). Therefore $F(z,s)=F(z)$ does not depend on $s$, and we may regard $F$ as a holomorphic function $F:\cU\to\C\otimes A$.
\end{proof}

The classical definition of slice-regular function is obtained with $\Phi(z,s)=s$ for all $(z,s)\in\cU\times S$. We first record the basic structural form of a generalized slice-regular function, which follows at once from Lemma \ref{lmm_generalized_stem} together with the equivariance condition $\mathfrak F\circ\sigma=\tau\circ\mathfrak F$.

\begin{propos}\label{prp_gen_structure}
Let $A$ be associative with $S$ compact and connected, and let $f:V\to A$ be a generalized slice-regular function. Write $F=\alpha+i\beta$ with $\alpha,\beta:\cU\to A$. Then
\begin{equation}\label{eq_gen_form}
f(\pi(z,s))=\alpha(z)+\Phi(z,s)\,\beta(z),
\end{equation}
where $\alpha,\beta$ satisfy the conditions of Section \ref{sregA}, namely $\alpha(\bar z)=\alpha(z)$, $\beta(\bar z)=-\beta(z)$ and the Cauchy--Riemann system $\partial_x\alpha=\partial_y\beta$, $\partial_y\alpha=-\partial_x\beta$. Moreover the angular component satisfies
\begin{equation}\label{eq_gen_Phi}
\Phi(\bar z,-s)=-\Phi(z,s)\qquad\text{for all }(z,s)\in\cU\times S.
\end{equation}
\end{propos}
\begin{proof}
By Lemma \ref{lmm_generalized_stem} we have $f(\pi(z,s))=\pi(F(z),\Phi(z,s))$ with $F:\cU\to\C\otimes A$ holomorphic. Writing $F=\alpha+i\beta$ and using $\pi(\alpha(z)+i\beta(z),w)=\alpha(z)+w\beta(z)$ for $w\in S$ gives \eqref{eq_gen_form}. Holomorphicity of $F$ is the Cauchy--Riemann system, and the identity $\mathfrak F\circ\sigma=\tau\circ\mathfrak F$ reads, on the first component, $F(\bar z)=\overline{F(z)}$, i.e. $\alpha(\bar z)=\alpha(z)$ and $\beta(\bar z)=-\beta(z)$, and, on the second component, exactly \eqref{eq_gen_Phi}.
\end{proof}

If one is interested in more quantitative properties, like representation formulas or a differential characterization, the form of the function $\Phi$ comes into play; we can put some restrictions on the form of $\Phi$ and obtain some subclasses of generalized slice-regular functions.

\paragraph{1.} If we ask that for every $z\in\cU$ the map $s\mapsto \Phi(z,s)$ is injective (or even an automorphism of $S$), we obtain functions which have generically isolated zeros.

\paragraph{2.} If we fix a holomorphic map $z\mapsto \phi_z\in\mathrm{Aut}(S)$, we can consider all the functions $f$ for which $\Phi(z,s)=\phi_z(s)$.

\paragraph{3.} If we fix $\phi\in\mathrm{Aut}(S)$, we can consider all the functions $f$ for which $\Phi(z,s)=\phi(s)$.

\medskip

The third class is the most rigid one, and for it the heuristic properties listed above can be turned into theorems. The key observation is that, for a fixed automorphism $\phi$, a function of the third class is obtained from an ordinary slice-regular function by a biholomorphic reparametrization of the slice variable. We make this precise.

\subsection{The reparametrization principle for the third class}\label{repar_class}

Let $\phi\in\mathrm{Aut}(S)$ be a holomorphic automorphism of $(S,J_S)$. Recall from Remark \ref{rem_iperpiano} that the map $(x+iy,s)\mapsto x+sy$ is injective on $\C_+\times S$; consequently every $a\in V\setminus\R$ is written uniquely as $a=\pi(z,s)$ with $z\in\cU\cap\C_+$ and $s\in S$, while on $V\cap\R$ the value $\pi(x,s)=x$ does not depend on $s$. We may therefore define a map
\[
\Psi_\phi:V\longrightarrow V,\qquad \Psi_\phi(\pi(z,s))=\pi(z,\phi(s)),\quad \Psi_\phi(x)=x\ \ (x\in V\cap\R).
\]

\begin{lemma}\label{lmm_reparam}
The map $\Psi_\phi$ is a well-defined homeomorphism of $V$, it restricts to a biholomorphism of $(V\setminus\R,\J)$, where $\J$ is the complex structure of Appendix \ref{hol_quat} extended to the present setting, and it maps the slice $\pi(\cU\times\{s\})$ onto the slice $\pi(\cU\times\{\phi(s)\})$. Its inverse is $\Psi_{\phi^{-1}}$.
\end{lemma}
\begin{proof}
Well-definedness and bijectivity on $V\setminus\R$ follow from the uniqueness of the representation $a=\pi(z,s)$, $z\in\C_+$, together with the fact that $\phi$ is a bijection of $S$; on $V\cap\R$ the map is the identity. By construction $\Psi_\phi$ sends $\pi(\cU\times\{s\})$ to $\pi(\cU\times\{\phi(s)\})$ and $\Psi_\phi\circ\Psi_{\phi^{-1}}=\mathrm{id}$. Finally, under the biholomorphism $(V\setminus\R,\J)\cong(\cU\cap\C_+)\times S$ recalled in Appendix \ref{hol_quat}, the map $\Psi_\phi$ corresponds to $\mathrm{id}\times\phi$; since $\phi$ is a biholomorphism of $(S,J_S)$, so is $\mathrm{id}\times\phi$, hence $\Psi_\phi$ is a biholomorphism of $V\setminus\R$.
\end{proof}

\begin{propos}\label{prp_repar}
Let $f:V\to A$ belong to the third class, $\Phi(z,s)=\phi(s)$ with $\phi\in\mathrm{Aut}(S)$, and let $g=\mathcal I(F):V\to A$ be the ordinary slice-regular function induced by the same stem function $F$. Then
\begin{equation}\label{eq_repar}
f=g\circ\Psi_\phi .
\end{equation}
Conversely, for every ordinary slice-regular $g$ and every $\phi\in\mathrm{Aut}(S)$, the function $g\circ\Psi_\phi$ is a generalized slice-regular function of the third class. Thus the third class is precisely $\{\,g\circ\Psi_\phi:\ g\ \text{slice-regular},\ \phi\in\mathrm{Aut}(S)\,\}$.
\end{propos}
\begin{proof}
By \eqref{eq_gen_form} we have $f(\pi(z,s))=\alpha(z)+\phi(s)\beta(z)=\pi(F(z),\phi(s))=g(\pi(z,\phi(s)))=g(\Psi_\phi(\pi(z,s)))$, which is \eqref{eq_repar}. Conversely, given $g=\mathcal I(F)$ and $\phi\in\mathrm{Aut}(S)$, set $\mathfrak F(z,s)=(F(z),\phi(s))$; this is holomorphic on $\cU\times S$ and satisfies $\pi\circ\mathfrak F=(g\circ\Psi_\phi)\circ\pi$. The equivariance $\mathfrak F\circ\sigma=\tau\circ\mathfrak F$ holds because $F(\bar z)=\overline{F(z)}$ and $\phi(-s)=-\phi(s)$, the oddness of $\phi$ being established in full generality in Remark \ref{rem_antipodal} below. Hence $g\circ\Psi_\phi$ is generalized slice-regular of the third class.
\end{proof}

\begin{rem}\label{rem_antipodal}
The oddness $\phi(-s)=-\phi(s)$ used in the proof of Proposition \ref{prp_repar} deserves a precise justification, since the antipodal map
\[
\nu:S\longrightarrow S,\qquad \nu(s)=-s,
\]
is \emph{not} itself an element of $\mathrm{Aut}(S,J_S)$. Indeed $\nu$ is a fixed-point-free real-analytic involution of $S$ (each component being mapped to a component of the same dimension, as $\tr K_{-s}=\tr K_s$), and its differential is $d\nu_s=-\mathrm{Id}$ on $A$. Since $T_{-s}S=\{h:(-s)h+h(-s)=0\}=T_sS$, the map $d\nu_s$ does send $T_sS$ to $T_{-s}S$; however, comparing it with the complex structures \eqref{eq_cpstr} of Appendix \ref{hol_quat}, namely $J_S^{(s)}h=sh$ and $J_S^{(-s)}h=-sh$, we find
\[
d\nu_s\big(J_S^{(s)}h\big)=-(sh),\qquad J_S^{(-s)}\big(d\nu_s h\big)=(-s)(-h)=sh,
\]
so that $d\nu_s\circ J_S^{(s)}=-\,J_S^{(-s)}\circ d\nu_s$. Thus $\nu$ is \emph{anti}-holomorphic for $J_S$. Consequently the oddness of $\phi$ cannot follow from a commutation of $\nu$ with the holomorphic automorphism group as a whole.

What is true, and what the argument actually requires, is that $\nu$ commutes with the automorphisms relevant to the construction, all of which are restrictions to $S$ of $\R$-linear self-maps of $A$. The reparametrizations $\phi$ admissible in the third class are precisely those compatible with the algebra structure underlying $\pi(z,s)=x+sy$, hence arise as restrictions to $S$ of $\R$-linear automorphisms of $A$ preserving $S$. These are of two kinds. First, the inner automorphisms
\[
\phi_g(s)=g^{-1}sg,\qquad g\in A^\times,
\]
which exhaust a neighbourhood of the identity in the local picture of $S$ as a conjugacy orbit (Lemma \ref{lemma_S_smooth}) and act holomorphically on $(S,J_S)$, being induced by left/right multiplications that commute with $L_s$; second, when $n=p+q$ is odd and $p-q\equiv 1\bmod 4$, the central twist $s\mapsto\omega s$ with $\omega=e_1\cdots e_n$, $\omega^2=1$ central, considered in the proof of Corollary \ref{cor_Cliff2}. In either case the map is $\R$-linear, and since $\nu=-\mathrm{Id}$ is $\R$-linear it commutes with it:
\[
\phi_g(\nu(s))=g^{-1}(-s)g=-\,g^{-1}sg=\nu(\phi_g(s)),\qquad \omega(\nu(s))=\omega(-s)=-(\omega s)=\nu(\omega s).
\]
Therefore every admissible $\phi$ satisfies $\phi(-s)=-\phi(s)$, as claimed.

Finally, this argument is manifestly \emph{invariant across the Clifford signatures} $(p,q)$: oddness rests only on the $\R$-linearity of $\nu$ and of the maps $\phi$, never on the dimension, the parity of $n$, or the residue of $p-q$. The signature enters the geometry of $S$ elsewhere—through $\tr K_s$ and the dimensions of the components (Corollaries \ref{cor_Cliff1}, \ref{cor_Cliff2} and the subsequent Remarks)—but it never affects the equivariance $\phi\circ\nu=\nu\circ\phi$. Hence Lemma \ref{lmm_reparam} and Proposition \ref{prp_repar} hold without restriction on $(p,q)$, in every case in which generalized slice-regular functions are defined.
\end{rem}

Proposition \ref{prp_repar} is an effective tool: every property of ordinary slice-regular functions that is invariant under the slice-preserving biholomorphism $\Psi_\phi$ transfers verbatim to the third class. We now harvest several such properties.

\begin{teorema}[Maximum modulus principle]\label{teo_max_modulus}
Let $\|\cdot\|$ be any norm on $A$ and let $f:V\to A$ be a generalized slice-regular function of the third class, with $\cU$ bounded. Then $\|f\|$ attains no strict local maximum in the interior of $V$ unless $f$ is constant; moreover
\[
\sup_{a\in V}\|f(a)\|=\sup_{a\in\partial_S V}\|f(a)\|,\qquad \partial_S V:=\pi(\partial\cU\times S).
\]
\end{teorema}
\begin{proof}
Fix $s\in S$ and consider the restriction of $f$ to the slice $\pi(\cU\times\{s\})$. By Proposition \ref{prp_repar}, $f(\pi(z,s))=\pi(F(z),\phi(s))$. Identifying the slice $\C_{\phi(s)}$ with $\C$ through $x+\phi(s)y\leftrightarrow x+iy$ and endowing $A$ with the complex structure $L_{\phi(s)}$, the map $z\mapsto\pi(F(z),\phi(s))$ is holomorphic, since $F$ is holomorphic and $\pi(\cdot,\phi(s))$ is complex linear from $(\C\otimes A,J_0\otimes\Uno)$ to $(A,L_{\phi(s)})$. A holomorphic map with values in the finite-dimensional complex vector space $(A,L_{\phi(s)})$ has plurisubharmonic norm, so $\|f\|$ restricted to each slice is subharmonic and obeys the maximum principle on $\cU$. Since every point of $V$ lies on such a slice and the slice boundary is contained in $\partial_S V$, the global statement follows by taking the supremum over $s\in S$.
\end{proof}

\begin{teorema}[Representation formula]\label{teo_representation}
Let $f:V\to A$ belong to the third class, with $\Phi(z,s)=\phi(s)$. Let $a,b\in S$ be such that $\phi(a)-\phi(b)\in A^\times$. Then, for every $s\in S$ and every $z\in\cU$,
\begin{equation}\label{eq_representation}
f(\pi(z,s))=f(\pi(z,a))+\bigl(\phi(s)-\phi(a)\bigr)\bigl(\phi(a)-\phi(b)\bigr)^{-1}\bigl(f(\pi(z,a))-f(\pi(z,b))\bigr).
\end{equation}
In particular $f$ is determined on all of $V$ by its values on the two slices $\pi(\cU\times\{a\})$ and $\pi(\cU\times\{b\})$.
\end{teorema}
\begin{proof}
Write $f_a:=f(\pi(z,a))=\alpha(z)+\phi(a)\beta(z)$ and $f_b:=f(\pi(z,b))=\alpha(z)+\phi(b)\beta(z)$, by \eqref{eq_gen_form}. Then $f_a-f_b=(\phi(a)-\phi(b))\beta(z)$, and since $\phi(a)-\phi(b)$ is invertible we obtain $\beta(z)=(\phi(a)-\phi(b))^{-1}(f_a-f_b)$ and $\alpha(z)=f_a-\phi(a)\beta(z)$. Substituting into $f(\pi(z,s))=\alpha(z)+\phi(s)\beta(z)$ gives \eqref{eq_representation}.
\end{proof}

\begin{rem}\label{rem_cauchy_slice}
The holomorphy of $z\mapsto\pi(F(z),\phi(s))$ established in the proof of Theorem \ref{teo_max_modulus} yields, on every slice, an honest Cauchy integral formula: if $\overline{D}\subset\cU$ is a closed disc with $z$ in its interior, then
\[
f(\pi(z,s))=\frac{1}{2\pi i}\oint_{\partial D}\frac{\pi(F(\zeta),\phi(s))}{\zeta-z}\,d\zeta,
\]
the integral being taken in the complex line $\C_{\phi(s)}\subset A$. This is the representation result anticipated above for the third class.
\end{rem}

\begin{corol}[Identity principle and structure of the zero set]\label{cor_identity}
Let $f$ be of the third class on $V=\pi(\cU\times S)$ with $\cU$ a domain. If $f$ vanishes on a subset of some slice $\pi(\cU\times\{s_0\})$ having an accumulation point in $\cU$, then $f\equiv 0$. More generally, the zero set of $f$ is the image under $\Psi_{\phi}^{-1}$ of the zero set of the ordinary slice-regular function $g=\mathcal I(F)$; in particular, on each slice the zeros of $f$ are isolated unless $f$ vanishes identically on that slice, and the non-isolated zeros of $f$ form a discrete union of \emph{characteristic sets} $\pi(\{z_0\}\times S)$.
\end{corol}
\begin{proof}
By Proposition \ref{prp_repar}, $f=g\circ\Psi_\phi$ with $\Psi_\phi$ a slice-preserving biholomorphism of $V\setminus\R$ (Lemma \ref{lmm_reparam}); hence $f^{-1}(0)=\Psi_\phi^{-1}(g^{-1}(0))$. On the slice $\pi(\cU\times\{s_0\})$ the function $z\mapsto\pi(F(z),\phi(s_0))$ is holomorphic with values in $(A,L_{\phi(s_0)})$, so the classical identity principle for vector-valued holomorphic functions applies and the stated conclusions follow from the corresponding facts for $g$, established in \cites{GP1,M1}.
\end{proof}

\begin{rem}\label{rem_rouche_H}
When $A=\H$, the function $g=\mathcal I(F)$ is an ordinary quaternionic slice-regular function, for which a Rouché theorem, an argument principle and a Hurwitz theorem are available \cite{GSS1}. Through the biholomorphism $\Psi_\phi$ of Proposition \ref{prp_repar} these statements transfer to every $f=g\circ\Psi_\phi$ of the third class, the relevant winding numbers being computed on the reparametrized slices $\pi(\cU\times\{\phi(s)\})$.
\end{rem}

\subsection{A differential characterization}\label{pde_char}

The definition of generalized slice regularity is, by its very nature, a holomorphy condition on the lift $\mathfrak F=(F,\Phi)$. We now express it, in the spirit of the global operators of Colombo--Sabadini--Struppa \cite{CSS} and Ghiloni--Perotti \cite{GhP}, as the vanishing of a Cauchy--Riemann type operator. We work on $V\setminus\R$, which carries the complex structure $\J$ of Appendix \ref{hol_quat}: for $a=\pi(z,s)\in V\setminus\R$ and $x\in T_a(V\setminus\R)$, one has $\J_a(x)=L_s(x)=sx$.

The general statement reads as follows.

\begin{propos}\label{prp_pde_general}
Let $A$ be associative with $S$ compact and connected. A function $f:V\to A$ is generalized slice-regular if and only if its lift $\widehat f:=\mathfrak F\circ(\pi|_{\C_+\times S})^{-1}:V\setminus\R\to\C\otimes A\times S$ is holomorphic from $(V\setminus\R,\J)$ to $(\C\otimes A\times S,\,(J_0\otimes\Uno)\oplus J_S)$, that is
\[
\bar\partial_{\J}\,\widehat f=0\qquad\text{on }V\setminus\R .
\]
\end{propos}
\begin{proof}
Under the biholomorphism $(V\setminus\R,\J)\cong(\cU\cap\C_+)\times S$ recalled in Appendix \ref{hol_quat}, the lift $\widehat f$ corresponds to $\mathfrak F=(F,\Phi)$. Holomorphy of $\mathfrak F$ for the product complex structure $(J_0\otimes\Uno)\oplus J_S$ is precisely the holomorphy required in the definition of generalized slice regularity; the involutive constraints $\mathfrak F\circ\sigma=\tau\circ\mathfrak F$ extend the data across $\R$ and are encoded in the equivariance of $\widehat f$. Hence $\bar\partial_{\J}\widehat f=0$ is equivalent to $f$ being generalized slice-regular.
\end{proof}

For the third class the condition descends to a single first-order operator acting on $f$ itself, with no reference to the lift.

\begin{teorema}[Twisted Cauchy--Riemann operator]\label{teo_pde_class3}
Fix $\phi\in\mathrm{Aut}(S)$ and let $f:V\to A$ be a slice function of the form \eqref{eq_gen_form} with $\Phi(z,s)=\phi(s)$, of class $C^1$. For $a=\pi(z,s)\in V\setminus\R$, with $z=x+iy$, define
\begin{equation}\label{eq_twisted_CR}
\mathcal D_\phi f(a):=\frac{\partial f}{\partial x}(a)+\phi(s)\,\frac{\partial f}{\partial y}(a),
\end{equation}
where $\partial_x,\partial_y$ denote the derivatives along the slice $\C_s$ in the coordinates $z=x+iy$. Then $f$ is a generalized slice-regular function of the third class with angular part $\phi$ if and only if
\[
\mathcal D_\phi f=0\qquad\text{on }V\setminus\R .
\]
For $\phi=\mathrm{id}_S$ the operator $\mathcal D_\phi$ reduces to the slice Cauchy--Riemann operator $\partial_x+L_s\partial_y$, and one recovers the classical definition of slice regularity given in Section \ref{sregA}.
\end{teorema}
\begin{proof}
On the slice $\C_s$ we have $f(\pi(z,s))=\alpha(z)+\phi(s)\beta(z)$, whence
\[
\mathcal D_\phi f=(\partial_x\alpha+\phi(s)\partial_x\beta)+\phi(s)(\partial_y\alpha+\phi(s)\partial_y\beta)=(\partial_x\alpha-\partial_y\beta)+\phi(s)(\partial_x\beta+\partial_y\alpha),
\]
using $\phi(s)^2=-1$. This expression vanishes for all $s$ if and only if $\partial_x\alpha=\partial_y\beta$ and $\partial_y\alpha=-\partial_x\beta$, that is, if and only if $F=\alpha+i\beta$ is holomorphic, which by Proposition \ref{prp_gen_structure} characterizes generalized slice regularity of the third class with $\Phi=\phi$. The reduction for $\phi=\mathrm{id}_S$ is immediate from \eqref{eq_twisted_CR}.
\end{proof}

\begin{rem}\label{rem_global_operator}
As in the classical theory, the slice operator \eqref{eq_twisted_CR} can be rewritten as a global operator with non-constant coefficients on $V\setminus\R$. Indeed, for $a=\pi(z,s)$ the slice unit is recovered intrinsically from the splitting $a=\tr(a)+s\,|a-\tr(a)|$ of Section \ref{orto}, so that $\phi(s)$, and hence $\mathcal D_\phi$, becomes a differential operator whose coefficients depend real-analytically on $a$ on $V\setminus\R$. For $\phi=\mathrm{id}$ this is the slice restriction of the global operator $\vartheta$ of \cite{GhP}, and, in the slice-monogenic setting, of the operator $G$ of \cite{CSS}.
\end{rem}

\medskip

Suppose now that we have two associative algebras $A_1$ and $A_2$, with $S_1$, $S_2$ associated sets of square roots of $-1$; we define the functions $\pi_1:\C\times S_1\to A_1$ and $\pi_2:\C\otimes A_2\times S_2\to A_2$ as above. Consider also the involutions $\sigma_1:\C\times S_1\to \C\times S_1$ and $\tau_2:\C\otimes A_2\times S_2\to \C\otimes A_2\times S_2$.

\begin{defin}Let $\cU\subseteq \C$ an open domain and $V_1=\pi_1(\cU\times S_1)$; a function $f:V_1\to A_2$ is called a \emph{generalized slice-regular function} if there exists a holomorphic map $\mathfrak{F}:\cU\times S_1\to\C\otimes A_2\times S_2$ such that $f\circ\pi_1=\pi_2\circ \mathfrak{F}$ and $\mathfrak{F}\circ\sigma_1=\tau_2\circ\mathfrak{F}$.\end{defin}

As before, such a function is of the form $\mathfrak{F}(z,s)=(F(z), \Phi(z,s))$ and we can define analogues of the three classes above.

\appendix
\section{Holomorphicity in the quaternionic case}\label{hol_quat}

This appendix collects, for the reader's convenience and to keep the main text centered on the twistor space $S(A)$ of a general algebra, the classical quaternionic picture from which the constructions of the paper originated.

This section is intended as a brief introduction to the different ways of defining slice-regular functions (on the quaternions or on a real alternative algebra with an involution) and of their links to actual holomorphic maps; we want to emphasize the connections between the various definitions and to work out the explicit correspondence between the holomorphic stem function and the twistor transform, both associated to a slice-regular function on quaternions.

\medskip

Let $\H$ be the algebra of quaternions and let $\sfera$ be the $2$-sphere of square roots of $-1$, i.e.
$$\sfera=\{q\in\H\ :\ q^2=-1\}\;.$$
Consider the map $\pi:\C\times\sfera\to\H$ given by $\pi((x+\iota y),u)=x+uy$. On $\C\times\sfera$, the map $\sigma:\C\times\sfera\to\C\times\sfera$ given by $\sigma(z,u)=(\overline{z},-u)$ is an involution with no fixed points and $\pi\circ\sigma=\pi$; let $\cU\subseteq\C$ be an open set, symmetric with respect to the real axis (i.e. invariant under complex conjugation), then $\cU\times\sfera$ is invariant under $\sigma$.

The open sets $V\subseteq\H$ that can be obtained as $V=\pi(\cU\times\sfera)$, with $\cU$ as above, are called \emph{axially symmetric}. 

Given $V\subseteq\H$, a function $f:V\to \H$ is said to be a \emph{left slice function} if we can find $\alpha,\beta:\C\to\H$ such that
\begin{itemize}
\item $\alpha(\overline{z})=\alpha(z)$
\item $\beta(\overline{z})=-\beta(z)$
\item $f\circ\pi(z, u)=\alpha(z)+u\beta(z)$ for all $z\in\cU$, $u\in\sfera$.
\end{itemize}
We note that the first two conditions are needed to ensure that $f$ is well defined.

Let $\C\otimes\H$ be the tensor product of $\C$ and $\H$ over $\R$; we define an involution on $\C\otimes\H$ by requiring that $\overline{z\otimes q}=\overline{z}\otimes q$ and taking the $\R$-linear extension. Let $\tau:\C\otimes\H\times\sfera\to\C\otimes\H\times\sfera$ the involution given by $\tau(z\otimes q, u)=(\overline{z}\otimes q, -u)$ and linear on the first component. The space $\C\otimes\H$ has a natural complex structure given by $J_0\otimes\Uno$, where $J_0$ is the natural complex structure of $\C$ induced by the multiplication by $i$ and $\Uno$ is the identity on $\H$.

Moreover, we extend the map $\pi$ to $\C\otimes\H\times\sfera$ as follows $\pi(z\otimes q, u)=\pi(z,u)q$, where the product on the left hand side is in $\H$, and then extend it linearly on $\C\otimes\H\times\sfera$.
We denote this extension again by $\pi$: the latter $\pi$ is an extension of the former once we identify $\C\times\sfera$ with $\C\otimes 1\times\sfera\subset\C\otimes\H\times\sfera$.

In \cite{GP1}, Ghiloni and Perotti prove the following.

\begin{propos}\label{prp_GP}Let $V\subseteq\H$ an axially symmetric domain, $V=\pi(\cU\times\sfera)$, and let $f:V\to\H$ be a left slice function, then there exists a unique $F:\cU\to\C\otimes \H$ such that
\begin{itemize}
\item $F(\overline{z})=\overline{F(z)}$ for all $z\in\cU$,
\item $f\circ\pi(z,u)=\pi(F(z), u)$ for all $(z,u)\in\cU\times\sfera$.
\end{itemize}
We say that $f$ is induced by $F$ and we write $f=\mathcal{I}(F)$; every left slice function can be obtained this way and every function obtained as $\mathcal{I}(F)$ is left slice. Moreover, $f$ is left slice-regular if and only if $F$ is holomorphic from $(\cU, J_0)$ to $(\C\otimes\H, J_0\otimes \Uno)$.
\end{propos}

\begin{rem}The first condition can be restated as an equivariance with respect to the involutions defined above as follows
$$(F,\mathrm{Id}_{\sfera})\circ \sigma=\tau\circ (F,\mathrm{Id}_\sfera)$$
where $(F,\mathrm{Id}_{\sfera}):\cU\times\sfera\to\C\otimes\H\times\sfera$ is given by $(F,\mathrm{Id}_\sfera)(z,u)=(F(z),u)$.
\end{rem}
In other words, they prove that there exists a holomorphic function $F:\cU\to\C\otimes\H$ such that the diagram
$$\xymatrix{\cU\ar[r]^-F\ar[d]_{\pi(\cdot, u)}&\C\otimes \H\ar[d]^{\pi(\cdot, u)}\\V\ar[r]^f&\H}$$
commutes for every $u\in\sfera$.

\medskip

In \cite{M1}, it was shown that, for every $q\in\H$, there exists a complex hypersurface $V(q)\subset\C\otimes\H$ such that $F(z)\in V(q)$ if and only if there exists $u\in\sfera$ such that $f(\pi(z,u))=q$. As $q$ varies in $\H$, the hypersurfaces $V(q)$ change by a translation of a real vector.

Also in \cite{M1}, we constructed a diffeomorphism between $\sfera$ and a complex submanifold of the Grassmannian of $2$-planes in $\C^4$, interpreting $u\in\sfera$ as a linear complex structure on $\R^4$ and associating it to its $(-i)$-eigenspace. This gives a complex structure on $\sfera$ (which can only be the standard one on $\CP^1$), naturally associated to slice-regular functions; we define the (almost) complex structure $\J$ on $\H\setminus\R$ as follows: given $q\in \H\setminus\R$, there exist unique $z\in\C_+$, $u\in\sfera$ such that $\pi(z,u)=q$, then we consider on $T_q(\H\setminus\R)\cong\H$ the linear involution $L_u(x)=ux$, with $x\in\H$, so, we set
\begin{equation}\label{eq_cpstr}\J_q(x)=L_u(x)\qquad\forall\;q\in\H\setminus\R, \ \forall\;x\in T_q(\H\setminus\R)\cong\H\;.\end{equation}
It is easy to show that $\J$ is integrable and that $(\H\setminus\R, \J)$ is biholomorphic to $\C_+\times\sfera$, where $\sfera$ has the standard complex structure of $\CP^1$. These considerations are carried out in detail in \cite{GSS}.

The following result is an easy consequence of the remarks we just made.

\begin{lemma}\label{lmm_hol2}Let $V=\pi(\cU\times\sfera)\subseteq\H$ an axially symmetric domain, $f:V\to\H$ a slice-regular function.  Then there exists a unique function
$$\mathscr{F}:\cU\times\sfera\to\C\otimes\H\times\sfera$$
such that
\begin{itemize}
\item $\mathscr{F}\circ\sigma=\tau\circ\mathscr{F}$
\item $\mathscr{F}$ is holomorphic, where $\sfera$ has been given the standard structure of $\CP^1$
\item $\mathscr{F}$ is the identity on the second component
\item $f\circ\pi=\pi\circ \mathscr{F}$.
\end{itemize}
In particular, $\mathscr{F}$ descends to a holomorphic function from $(V\setminus\R, \J)$ to $\C\otimes\H\times\CP^1$. Conversely, every such function $\mathscr{F}$ induces a slice-regular function $f$.
\end{lemma}

The situation is encoded in the following commutative diagram.
\begin{equation}\label{diagramma1}
\xymatrix{\cU\times\sfera\ar[r]^-{\mathscr{F}}\ar[d]_{\pi} & {\C\otimes\H\times\sfera}\ar[d]^\pi\\ V\ar[r]^f & \H}
\end{equation}
The previous Lemma is somehow halfway between stem functions and the twistor transform. From this viewpoint, slice-regular functions on an axially symmetric open domain $V=\pi(\cU\times\sfera)$ become holomorphic functions from $\cU\times\CP^{1}$ to $\C\otimes\H\times\CP^{1}$; however, as soon as we look at the values of a slice-regular function, we encounter the obstruction given by the projection map $\pi:\C\otimes\H\times\CP^{1}\to\H$, which is not holomorphic in any natural complex structure on the target.

\subsection{Fixing a basis}\label{fixing_basis}

We now look at another way of associating a holomorphic function to a slice-regular function. Let $\{1,I,J,K\}$ be an orthonormal basis for $\H$ such that $IJ=K$; in fact, we will just need that $IJ+JI=0$ and that $IJ=K$, the orthonormality is a consequence of these two properties. Given $z\in\C$, we denote $\pi(z,u)$ by $z_u$ for any $u\in\sfera$; accordingly, we write $\C_u$ for the set of all the quaternions $z_u$, as $z$ varies in $\C$. Moreover, we have that
$$\H\cong \C_I\oplus(\C_IJ)\;,$$
i.e. we have an isomorphism of additive groups $\gamma:\C^2\to\H$ given by $\gamma(z,w)=z_I+w_IJ$.

\begin{rem}\label{rem_ovvio}It is obvious that $\gamma(\lambda z,\lambda w)=\lambda_I\gamma(z,w)$ for all $\lambda\in\C$.\end{rem}

Now, let us consider two elements $u,v\in\sfera$ and $z\in\C$; the quaternions $z_u$ and $z_v$ differ by a rotation of $\H$ that fixes the real axis, i.e. by a transformation of the form
$$Q_p(q)=p^{-1}qp$$
for some $p\in\H\setminus\{0\}$. If we write $p=\gamma(z,w)$, $p'=\pi(z',w')$, given a quaternion $q=z_I$ it is quite clear that $Q_p(q)=Q_{p'}(q)$ if and only if $(z,w)=\lambda(z',w')$ with $\lambda\in\C^*$; therefore the set of maps $Q_p\vert_{\C_I}$ is identified with $\CP^1$, via $\gamma$. 

We define the map
$$\rho:\C\times\CP^1\to\H$$
as follows
$$\rho(z,[u_0:u_1])=Q_{\gamma(u_0,u_1)}(z_I))=(\gamma(u_0,u_1))^{-1}z_I(\gamma(u_0,u_1))\;.$$
By Remark \ref{rem_ovvio},
$$\rho(z,[u_0:u_1])=(\gamma(u_0,u_1))^{-1}\gamma(zu_0,zu_1)$$
is actually a function of the homogeneous $4$-tuple $[u_0:u_1:zu_0:zu_1]$. Let
$$S_1:\CP^1\times\CP^1\to\CP^3$$
be the Segre embedding
$$S_1([x:y],[a:b])=[xa:xb:ya:yb]$$
and let $\rho_1:\CP^3\to\H$ be given by
$$\rho_1([w_0:w_1:w_2:w_3])=\gamma(w_0,w_1)^{-1}\gamma(w_2,w_3)\;.$$
Then $\rho(z,[u_0:u_1])=\rho_1\circ S_1([1,z],[u_0:u_1])$. We have thus factorized the leftmost vertical arrow in \eqref{diagramma1} as
$$\xymatrix{\C\times \sfera\ar[r]^-\cong&\C\times\CP^1\ar[r]^-{S_1}&\CP^3\ar[r]^-{\rho_1}&\H}\;.$$

\medskip

On the other hand, considering the map $\pi:\C\otimes\H\times\sfera\to\H$, we obtain a map
$$\delta: \C\otimes\H\times\CP^1\to\H$$
which, on the elements $(z\otimes q,[u_0:u_1])$ acts as follows
$$\delta(z\otimes q,[u_0:u_1])=\rho(z,[u_0:u_1])q=\gamma(u_0,u_1)^{-1}z_I\gamma(u_0,u_1)q\;.$$

\begin{rem}\label{rem_ovvio2} For every $(z,w)\in\C^2$, we have
$$\gamma(z,w)I=\gamma(iz,-iw)\qquad \gamma(z,w)J=\gamma(-w,z)\qquad \gamma(z,w)K=\gamma(iw,iz)\;.$$
\end{rem}
Thanks to Remarks \ref{rem_ovvio}, \ref{rem_ovvio2} and to the additivity of $\gamma$, we obtain that $\delta(z\otimes q, [u_0:u_1])$ is
$$\gamma(u_0,u_1)^{-1}\gamma(q_0zu_0+iq_1zu_0-q_2zu_1+iq_3zu_1,q_0zu_1-iq_1zu_1+q_2zu_0+iq_3zu_0)$$
where $q=q_0+q_1I+q_2J+q_3K$. If we use the basis $\{1,I,J,K\}$ to identify $\C\otimes\H$ with $\C^4$, then $\delta((z_0,z_1,z_2,z_3),[u_0:u_1])=\rho_1\circ \delta_1\circ S_2([1:z_0:z_1:z_2:z_3],[u_0:u_1])$, where
$$S_2:\CP^4\times\CP^1\to\CP^9$$
is the Segre embedding and
$$\delta_1:\CP^9\to\CP^3$$
is given by
$$\delta_1([w_0:\ldots:w_9])=[w_0:w_1:w_2+iw_4-w_6+iw_8:w_3-iw_5+w_7+iw_9]\;.$$

Therefore, we factored the rightmost vertical arrow of the diagram \eqref{diagramma1} as
$$\xymatrix{\C\otimes\H\times\sfera\ar[r]^-{\cong}&\C^4\times\CP^1\ar[r]^-{S_2}&\CP^9\ar[r]^-{\delta_1}&\CP^3\ar[r]^-{\rho_1}&\H}\;.$$

\begin{rem}It could surprise that the map $\delta_1$ does not depend on the choice of the basis $\{1,I,J,K\}$; however, for $q\in\H$, let us consider the $\C$-linear map $F_q:\C^2\to\C^2$ defined by
$$F_q(z,w)=\gamma^{-1}(\gamma(z,w)q)\;.$$
Then,
$$\delta_1([w_0:\ldots:w_9])=\begin{bmatrix}F_1 & 0 &0 &0 &0\\0& F_1 &F_I & F_J& F_K\end{bmatrix}\begin{bmatrix}[w_0:w_1]\\\vdots\\ [w_8:w_9]\end{bmatrix}\;.$$
Moreover, the fact that $I\gamma(z,w)=\gamma(iz,iw)$ and that $IJ=K=-JI$ uniquely identifies all the linear maps involved.
\end{rem}

If we expand the diagram \eqref{diagramma1} incorporating the two factorizations found above, we obtain the following diagram.
$$\xymatrix{\cU\times\sfera\ar[dddd]_\pi\ar@{^{(}->}[dr]\ar[rrrr]^-{\mathscr{F}=(F,\mathrm{Id}_{\sfera})} &&&&\C\otimes \H\times\sfera\ar[dddd]^\pi\ar[ld]^-{\cong}\\
& \C\times\CP^1\ar[d]_{S_1}\ar@{-->}[rrdd]^{\widetilde{F}}& & \C^4\times\CP^1\ar[d]^{S_2}& \\
& \CP^3\ar[d]_{\rho_1}& & \CP^9\ar[d]^{\delta_1}& \\
&\H& & \CP^3\ar[rd]^{\rho_1}& \\
V\ar[rrrr]^f\ar@{^{(}->}[ur]&  && & \H}$$

This proves Lemma \ref{lmm_twist}, stated in \S\ref{guiding_example} as a guiding example: $\widetilde{F}$ is defined by the commutative diagram above and can be expressed in terms of the function $\mathscr{F}$, or (which is equivalent) in terms of the function $F$ given by Ghiloni and Perotti; it is holomorphic, sends the complex spheres $\{z\}\times\CP^1$ into lines of $\CP^3$, and satisfies $f\circ\rho=\rho_1\circ\widetilde F$, this last identity being immediate from the commutativity of the diagram. This result is contained in \cite{GSS}.

By the Segre embedding $S_1$, we can identify $\cU\times\CP^1$ with an open set of the Klein quadric in $\CP^3$; moreover, we have the following corollary, also present in \cite{GSS}.

\begin{corol}\label{cor_twist}Given $V$, $\cU$ as before, every slice-regular function $f:V\to\H$ induces a holomorphic function
$$\mathcal{F}:\cU\to\mathrm{Gr}_\C(2, \C^4)\;.$$
\end{corol}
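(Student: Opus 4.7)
The plan is to obtain $\mathcal{F}$ as the Gauss-type map of the family of projective lines $\ell_z\subset\CP^3$ produced by Lemma \ref{lmm_twist}, using the classical identification $\mathrm{Gr}_\C(2,\C^4)\cong\{\text{projective lines in }\CP^3\}$ supplied by the Plücker embedding. Concretely, Lemma \ref{lmm_twist} gives a holomorphic $\widetilde{F}:\cU\times\CP^1\to\CP^3$ with the property that, for each fixed $z\in\cU$, the image $\widetilde{F}(\{z\}\times\CP^1)$ is contained in a projective line $\ell_z\subset\CP^3$. The map I wish to construct sends $z$ to the point of $\mathrm{Gr}_\C(2,\C^4)$ corresponding to $\ell_z$ (equivalently, to the $2$-plane in $\C^4$ whose projectivization is $\ell_z$).

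To check that this assignment is well-defined and holomorphic, I would proceed locally. Fix two distinct points $p=[a:b],q=[c:d]\in\CP^1$; on a neighbourhood of any $z_0\in\cU$ one can select holomorphic lifts $v_p(z),v_q(z)\in\C^4\setminus\{0\}$ of $\widetilde{F}(z,p)$ and $\widetilde{F}(z,q)$, since $\CP^3$ is locally trivialized by its standard affine charts. Whenever $v_p(z)$ and $v_q(z)$ are linearly independent they span the $2$-plane $\mathcal{F}(z)\subset\C^4$, and the Plücker coordinates of $\mathcal{F}(z)$ are the six $2\times 2$ minors of the matrix with rows $v_p(z)$ and $v_q(z)$, hence they depend holomorphically on $z$. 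Changing the choice of $p,q$ or of the lifts only rescales the resulting vector of minors, so the induced map to $\CP^5$ -- and therefore the map to the Grassmannian -- is independent of these choices and holomorphic on $\cU$.

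The one point that needs an extra word is why the line $\ell_z$ is genuinely one-dimensional for every $z$, and why the two lifts $v_p(z),v_q(z)$ can always be chosen to be linearly independent. This follows from the explicit factorization of $\widetilde{F}$ worked out just before Lemma \ref{lmm_twist}: the restriction of $\widetilde{F}$ to $\{z\}\times\CP^1$ is the composition of the Segre embedding $S_1([1:z],\cdot)$ with the $\C$-linear maps $S_2$ and $\delta_1$, so it is itself a linear embedding of $\CP^1$ as a projective line in $\CP^3$. In particular the image is never a single point, and for any two distinct $p,q\in\CP^1$ the vectors $v_p(z),v_q(z)$ are linearly independent for every $z$; this both makes $\ell_z$ unambiguously defined and guarantees that the Plücker vector never vanishes, so $\mathcal{F}:\cU\to\mathrm{Gr}_\C(2,\C^4)$ is globally well-defined and holomorphic.

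The only place where any care is required is the verification that the line $\ell_z$ is determined by $\widetilde F$ and not merely contained in it; once the Segre factorization is invoked this is immediate, so the bulk of the argument is really the translation "holomorphic family of lines in $\CP^3$ $=$ holomorphic map to the Klein quadric".
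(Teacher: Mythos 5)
Your argument is correct and is exactly the route the paper intends: the paper states the corollary without proof, as an immediate consequence of Lemma \ref{lmm_twist} together with the identification of lines in $\CP^3$ with points of $\mathrm{Gr}_\C(2,\C^4)$, and your Pl\"ucker-coordinate verification of holomorphy and of the non-degeneracy of each line $\ell_z$ fills in precisely the details left implicit. The only slip is notational: the restriction of $\widetilde F$ to $\{z\}\times\CP^1$ factors as $\delta_1\circ S_2([1:F(z)],\cdot)$ (not through $S_1([1:z],\cdot)$, which lives on the domain side of the diagram); the non-degeneracy still follows since the first two homogeneous coordinates of $\delta_1\circ S_2([1:F(z)],[u_0:u_1])$ are $u_0,u_1$.
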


Both in Lemma \ref{lmm_twist} and Corollary \ref{cor_twist}, we could also impose some symmetry conditions on the holomorphic function, obtaining a bijective correspondence between slice-regular functions and this class of holomorphic functions. Those symmetry conditions can be written in the form of equivariance with respect to a real structure on $\CP^3$, or equivalently on the Grassmannian $\mathrm{Gr}_\C(2, \C^4)$. The detailed description can be found in \cite{GSS}.

\begin{rem}We want to stress that, whereas the functions $F$ and $\mathscr{F}$ are defined only in terms of $f$, the functions $\widetilde{F}$ and $\mathcal{F}$ depend on the choice of the orthonormal basis $\{1,I,J,K\}$.\end{rem}

From this detailed analysis, we resume the result from \cite{M1} about the complex-analyticity of $V(q)$: consider $V(0)$, which is the projection on the factor $\CP^4$ of the set $(\delta_1\circ S_2)^{-1}(L)$ where $L\subseteq\CP^3$ is the line $\{[w_0:w_1:0:0]\ :\ [w_0:w_1]\in\CP^1\}$. Indeed, from this description one recovers completely \cite{M1}*{Theorem 3.3} just working out explicitly the coordinate expressions of the maps involved.

\begin{rem}\label{rem_pvs}We describe the construction of $\tilde{F}$ from a different viewpoint.

We notice that, if $q=x+Iy$, then
$$Q_p(q)=Q_p(x+Iy)=p^{-1}(x+Iy)p=x+Q_p(I)y$$
so, given that $Q_p(\sfera)=\sfera$, we are looking at the set
$$\mathcal{V}=\{(v,z,w)\in \sfera\times\C\times \C\ :\ pv=Ip, \textrm{ with } p=\gamma(z,w)\}$$
together with its projection $p_1:\mathcal{V}\to\sfera$, which, as we noted before, is surjective. We showed that $p_1^{-1}(v)$ is a complex line in $\C^2$ and $p_1:\mathcal{V}\to\sfera$ is a line bundle on $\CP^1$.

The line bundle $p_1:\mathcal{V}\to\sfera$ admits a nowhere vanishing section $\sigma$, which we interpret as a function from $\sfera$ to $\C^2$ (given that $\mathcal{V}$ is a subbundle of the trivial bundle of rank $2$); given $F:\mathcal{U}\to\C^4$, we construct the function $\tilde{F}:\mathcal{U}\times\sfera\to \CP^3=\P(\C^2\oplus\C^2)$ by
$$\tilde{F}(z,v)=[(\gamma(\sigma(v)), \gamma(F_1(z)T_1(\sigma(v))+F_2(z)T_I(\sigma(v))+F_3(z)T_J(\sigma(v))+F_4(z)T_K(\sigma(v))))]\;,$$
where $T_v(z,w)=\gamma^{-1}(\gamma(z,w)v)$.
\end{rem}

\begin{bibdiv}
\begin{biblist}

\bib{Alt}{article}{
   author={Altavilla, Amedeo},
   title={Some properties for quaternionic slice-regular functions on domains
   without real points},
   journal={Complex Var. Elliptic Equ.},
   volume={60},
   date={2015},
   number={1},
   pages={59--77},
   doi={10.1080/17476933.2014.889691},
}

\bib{AHS}{article}{
   author={Atiyah, M. F.},
   author={Hitchin, N. J.},
   author={Singer, I. M.},
   title={Self-duality in four-dimensional Riemannian geometry},
   journal={Proc. Roy. Soc. London Ser. A},
   volume={362},
   date={1978},
   number={1711},
   pages={425--461},
   doi={10.1098/rspa.1978.0143},
}

\bib{CSS}{article}{
   author={Colombo, Fabrizio},
   author={Sabadini, Irene},
   author={Struppa, Daniele C.},
   title={Slice monogenic functions},
   journal={Israel J. Math.},
   volume={171},
   date={2009},
   pages={385--403},
   doi={10.1007/s11856-009-0055-4},
}

\bib{CSS2}{book}{
   author={Colombo, Fabrizio},
   author={Sabadini, Irene},
   author={Struppa, Daniele C.},
   title={Noncommutative functional calculus},
   series={Progress in Mathematics},
   volume={289},
   note={Theory and applications of slice hyperholomorphic functions},
   publisher={Birkh\"{a}user/Springer Basel AG, Basel},
   date={2011},
   pages={vi+221},
   isbn={978-3-0348-0109-6},
   doi={10.1007/978-3-0348-0110-2},
}

\bib{CSS1}{book}{
   author={Colombo, Fabrizio},
   author={Sabadini, Irene},
   author={Struppa, Daniele C.},
   title={Entire slice regular functions},
   series={SpringerBriefs in Mathematics},
   publisher={Springer, Cham},
   date={2016},
   pages={v+118},
   isbn={978-3-319-49264-3},
   isbn={978-3-319-49265-0},
   doi={10.1007/978-3-319-49265-0},
}

\bib{Fue}{article}{
   author={Fueter, Rudolf},
   title={Die Funktionentheorie der Differentialgleichungen
   $\Delta u=0$ und $\Delta\Delta u=0$ mit vier reellen Variablen},
   language={German},
   journal={Comment. Math. Helv.},
   volume={7},
   date={1934},
   number={1},
   pages={307--330},
   doi={10.1007/BF01292723},
}

\bib{GSS}{article}{
   author={Gentili, Graziano},
   author={Salamon, Simon},
   author={Stoppato, Caterina},
   title={Twistor transforms of quaternionic functions and orthogonal
   complex structures},
   journal={J. Eur. Math. Soc. (JEMS)},
   volume={16},
   date={2014},
   number={11},
   pages={2323--2353},
   doi={10.4171/JEMS/488},
}

\bib{GeSt}{article}{
   author={Gentili, Graziano},
   author={Stoppato, Caterina},
   title={A local representation formula for quaternionic slice regular
   functions},
   journal={Proc. Amer. Math. Soc.},
   volume={149},
   date={2021},
   number={5},
   pages={2025--2034},
   doi={10.1090/proc/15339},
}

\bib{GSS1}{book}{
   author={Gentili, Graziano},
   author={Stoppato, Caterina},
   author={Struppa, Daniele C.},
   title={Regular functions of a quaternionic variable},
   series={Springer Monographs in Mathematics},
   publisher={Springer, Heidelberg},
   date={2013},
   pages={xix+185},
   isbn={978-3-642-33870-0},
   isbn={978-3-642-33871-7},
   doi={10.1007/978-3-642-33871-7},
}


\bib{GS2}{article}{
   author={Gentili, Graziano},
   author={Struppa, Daniele C.},
   title={A new theory of regular functions of a quaternionic variable},
   journal={Adv. Math.},
   volume={216},
   date={2007},
   number={1},
   pages={279--301},
   doi={10.1016/j.aim.2007.05.010},
}

\bib{GS3}{article}{
   author={Gentili, Graziano},
   author={Struppa, Daniele C.},
   title={Regular functions on a Clifford algebra},
   journal={Complex Var. Elliptic Equ.},
   volume={53},
   date={2008},
   number={5},
   pages={475--483},
   doi={10.1080/17476930701778869},
}

\bib{GP1}{article}{
   author={Ghiloni, Riccardo},
   author={Perotti, Alessandro},
   title={Slice regular functions on real alternative algebras},
   journal={Adv. Math.},
   volume={226},
   date={2011},
   number={2},
   pages={1662--1691},
   doi={10.1016/j.aim.2010.08.015},
}

\bib{GhP}{article}{
   author={Ghiloni, Riccardo},
   author={Perotti, Alessandro},
   title={Global differential equations for slice regular functions},
   journal={Math. Nachr.},
   volume={287},
   date={2014},
   number={5-6},
   pages={561--573},
   doi={10.1002/mana.201200318},
}

\bib{GPsv}{article}{
   author={Ghiloni, Riccardo},
   author={Perotti, Alessandro},
   title={Slice regular functions in several variables},
   journal={Math. Z.},
   volume={302},
   date={2022},
   number={1},
   pages={295--351},
   doi={10.1007/s00209-022-03066-9},
}

\bib{GPS1}{article}{
   author={Ghiloni, Riccardo},
   author={Perotti, Alessandro},
   author={Stoppato, Caterina},
   title={Singularities of slice regular functions over real alternative
   $^*$-algebras},
   journal={Adv. Math.},
   volume={305},
   date={2017},
   pages={1085--1130},
   doi={10.1016/j.aim.2016.10.009},
}

\bib{GPS2}{article}{
   author={Ghiloni, Riccardo},
   author={Perotti, Alessandro},
   author={Stoppato, Caterina},
   title={The algebra of slice functions},
   journal={Trans. Amer. Math. Soc.},
   volume={369},
   date={2017},
   number={7},
   pages={4725--4762},
   doi={10.1090/tran/6816},
}

\bib{GR}{book}{
   author={Grauert, Hans},
   author={Remmert, Reinhold},
   title={Theory of Stein spaces},
   series={Classics in Mathematics},
   note={Translated from the German by Alan Huckleberry;
   reprint of the 1979 translation},
   publisher={Springer-Verlag, Berlin},
   date={2004},
   pages={xxii+255},
   doi={10.1007/978-3-642-18921-0},
}

\bib{M1}{article}{
   author={Mongodi, Samuele},
   title={Holomorphicity of slice-regular functions},
   journal={Complex Anal. Oper. Theory},
   volume={14},
   date={2020},
   number={3},
   pages={Paper No. 37, 26},
   doi={10.1007/s11785-020-00996-2},
}

\bib{Pen}{article}{
   author={Penrose, Roger},
   title={Twistor algebra},
   journal={J. Math. Phys.},
   volume={8},
   date={1967},
   number={2},
   pages={345--366},
   doi={10.1063/1.1705200},
}

\bib{Rin}{article}{
   author={Rinehart, R. F.},
   title={Elements of a theory of intrinsic functions on algebras},
   journal={Duke Math. J.},
   volume={27},
   date={1960},
   number={1},
   pages={1--19},
   doi={10.1215/S0012-7094-60-02701-0},
}

\bib{Sal}{article}{
   author={Salamon, Simon},
   title={Quaternionic K\"{a}hler manifolds},
   journal={Invent. Math.},
   volume={67},
   date={1982},
   number={1},
   pages={143--171},
   doi={10.1007/BF01393378},
}

\bib{Sce}{article}{
   author={Sce, Michele},
   title={Osservazioni sulle serie di potenze nei moduli quadratici},
   language={Italian},
   journal={Atti Accad. Naz. Lincei Rend. Cl. Sci. Fis. Mat. Nat. (8)},
   volume={23},
   date={1957},
   pages={220--225},
}

\end{biblist}
\end{bibdiv}
\end{document}